\newtheorem{Theorem}{Theorem}
\newtheorem{Remark}{Remark}
\newtheorem{Lemma}{Lemma}
\newtheorem{Corollary}{Corollary}
\newtheorem{Definition}{Definition}
\numberwithin{Definition}{section}
\numberwithin{Theorem}{section}
\numberwithin{Lemma}{section}
\numberwithin{Proposition}{section}
\numberwithin{Corollary}{section}
\numberwithin{Example}{section}
\numberwithin{Remark}{section}
\begin{document}
\title{ Enhanced Fritz John Stationarity, New Constraint Qualifications and Local Error Bound for Mathematical Programs with Vanishing Constraints}
	
	\author{Abeka Khare\thanks{Department of Mathematics and Statistics, Dr. Harisingh Gour Vishwavidyalaya, Sagar, Madhya Pradesh-470003, INDIA, Email- abekakhare2012@gmail.com}  and  Triloki Nath\thanks{Department of Mathematics and Statistics, Dr. Harisingh Gour Vishwavidyalaya, Sagar, Madhya Pradesh-470003, INDIA, Email- tnverma07@gmail.com} }
	\maketitle
\begin{abstract}
In this paper, we study the difficult class of optimization problems called the mathematical programs with vanishing constraints or MPVC. Extensive research has been done for MPVC regarding stationary conditions and constraint qualifications using geometric approaches. We use the Fritz John  approach for MPVC to derive the M-stationary conditions under weak  constraint qualifications. An enhanced Fritz John type stationary condition is also derived for MPVC, which provides the notion of enhanced M-stationarity under a new and weaker constraint qualification: MPVC-generalized quasinormality. We show that this new constraint qualification is even weaker than MPVC-CPLD. A local error bound result is also established under MPVC-generalized quasinormality.
\end{abstract}


\section{Introduction}
\label{sec:intro}
In this paper, we consider a particular form of optimization problem which attracted the attention of the optimization community over the past decade. It has the following  form
\begin{eqnarray}
\label{eq1.2}
\nonumber {\rm min} ~~ f(x)\\
\nonumber {\rm s.t.}~~ g_i(x) &\leqslant&  0 ~~~\forall~  i=1,2,...,m,\\
\nonumber h_j(x) &=&  0 ~~~\forall~  j=1,2,...,p,\\
\nonumber	H_i(x) &\geqslant&  0 ~~~\forall~  i=1,2,...,q,\\
		G_i(x) H_i(x) &\leqslant&  0 ~~~\forall~  i=1,2,...,q
	\end{eqnarray}
where all functions $f: \mathbb{R}^n \rightarrow \mathbb{R}, ~g_i: \mathbb{R}^n \rightarrow \mathbb{R},~h_i: \mathbb{R}^n \rightarrow \mathbb{R},~G_i: \mathbb{R}^n \rightarrow \mathbb{R},~H_i:\mathbb{R}^n \rightarrow \mathbb{R}$ are continuously differentiable.
 This problem (\ref{eq1.2}) is called \emph{mathematical program with vanishing constraints} (or MPVC) due to its implicit sign constraints $G_i(x) \leqslant 0$, which vanishes immediately whenever $H_i(x) = 0$ (Indeed, $H_i(x) = 0 \Longrightarrow G_i(x) \in \mathbb R$, \emph{i.e.} $G_i(x) H_i(x) \leqslant  0$ is no more a constraint). We denote by $\mathcal{C}$,  the feasible region for this MPVC throughout the paper.\\
 \hspace*{0.5cm} The above formulation of the problem has been first introduced and studied by Achtziger and Kanzow in \cite{achtziger}, where the structure of the problem is discussed in a very lucid and systematic way. Before \cite{achtziger}, a few  papers in engineering applications had appeared e.g. \cite{achtziger 1,bendsoe,cheng,kirsch}, which considered particular cases of this general problem, but \cite{achtziger} was the first formal treatment of MPVC, it also featured some applications of MPVC like ground structure and truss structure design. The applications are not limited only to structural and topology optimization, but also applicable in robots motion planning \cite{Kirches2,Latombe}. These applications of MPVC motivate the researchers for further study  to  evolve the field  and to find some new tools to tackle this problem from theoretical as well as algorithmic point of view, e.g. \cite{achtziger2,jabr,michael}. Subsequent to \cite{achtziger}, a lot of collaborative work has been done by Hoheisel and Kanzow in \cite{hoheisel07,hoheisel 2,hoheisel 4}, which are the detailed study about the constraint qualifications and optimality conditions for MPVC. For more literature on MPVC, we refer to \cite{hoheisel07,hoheisel,Izmailov} and references therein. \\\\
 \hspace*{0.5cm} Another well-studied class of optimization problems is \emph{mathematical programs with equilibrium constraints} (MPEC)
 \begin{eqnarray}
\label{eq1.3}
\nonumber {\rm min} ~~ f(x)\\
\nonumber {\rm s.t.}~~ g_i(x) &\leqslant&  0 ~~~\forall~  i=1,2,...,m,\\
\nonumber h_j(x) &=&  0 ~~~\forall~  j=1,2,...,p,\\
\nonumber	G_i(x) &\geqslant&  0 ~~~\forall~  i=1,2,...,q,\\
\nonumber	H_i(x) &\geqslant&  0 ~~~\forall~  i=1,2,...,q,\\
		 G_i(x) H_i(x) &=&  0 ~~~\forall~  i=1,2,...,q
\end{eqnarray}
where all functions $f : \mathbb{R}^n \rightarrow \mathbb{R}, ~g_i : \mathbb{R}^n \rightarrow \mathbb{R},~h_i : \mathbb{R}^n \rightarrow \mathbb{R},~G_i : \mathbb{R}^n \rightarrow \mathbb{R},~H_i: \mathbb{R}^n \rightarrow \mathbb{R}$ are continuously differentiable. The MPEC is known to be a difficult optimization problem due to the violation of the standard constraint qualifications (CQs), e.g. the linear independence constraint qualification (LICQ) and the Mangasarian-Fromovitz constraint qualification (MFCQ) at any feasible point. It possibly happens due to disjunctive and combinatorial nature of characteristic constraints (e.g. $H_i(x)\geqslant 0, G_i(x) H_i(x)\leqslant 0$ in MPVC). Hence, the classical KKT conditions of standard nonlinear programming are not always necessary
optimality conditions for MPEC, even in the case when all constraint functions are affine.
In order to find the first-order optimality conditions for an MPEC, modified constraint qualifications called MPEC-tailored constraint qualifications are defined. We refer to \cite{Luo1,Pang,Scheel,ye4} for a comprehensive overview. One of the optimality conditions called strong stationarity (S-\emph{stationarity}) is equivalent to the KKT conditions of an MPEC \cite{flegel05} (c.f. \cite{achtziger} for MPVC). Hence, the S-stationarity is not always a necessary optimality condition. A slightly weaker  notion called M-\emph{stationarity} (see \cite{Outrata1,Outrata2,ye 2,ye5}) is
first-order necessary optimality conditions, which hold under mild assumptions, see \cite{flegel06,flegel07,ye 3}).
To overcome this  difficulty, the  violation of standard constraint qualifications, the Fritz John approach is useful. Because it is classical that  Fritz John necessary conditions do not require any constraint qualification. This approach
is used in \cite{flegel}, to provide a simple proof for A-\emph{stationarity} ( weaker than M-stationarity) to be a necessary optimality condition under MPEC-MFCQ. In \cite{ye 3} (see also \cite{flegel06}),  based on the limiting subdifferential and the limiting coderivatives by Mordukhovich, see \cite{Rocka,mordu,mordu1} to grasp such concepts, the Fritz John approach is used to find M-stationarity which is most appropriate necessary optimality condition for MPEC.\\
 \hspace*{0.5cm} To this end, we note that it has been pointed out in \cite {achtziger} that MPVC can always be reformulated as MPEC. But it has some drawbacks, particularly since it increases the dimension of the problem, and it involves locally non-unique solutions of corresponding MPEC, also it loses its characteristic of vanishing constraints of MPVC. So, it suggests to investigating MPVC independently taking into account the special structure of vanishing constraints. On the other hand, it also suggests that the whole MPEC machinery or analogous to that can be applied  to an MPVC. Thus, Fritz John approach too can be used to exploit the special structure of vanishing constraints of an MPVC. Using the approach for MPVC, we show that M-stationary conditions hold under MPVC-MFCQ, MPVC-\emph{linear}-CQ and a weaker constraint qualification MPVC-GMFCQ.\\

The \emph{enhanced} Fritz John conditions were first introduced by Bertsekas \cite{bert} (a weaker version of this is given by Hestenes \cite {hestenes}) is stronger than the classical Fritz John conditions. Following \cite{bert}, Kanzow and Schwartz \cite{Kanzow1}
established the enhanced KKT type  stationarity  (called enhanced M-stationarity ) conditions for MPEC under weaker MPEC-constraint qualifications, namely MPEC \emph{generalized pseudonormality} and \emph{quasinormality}. These constraint qualifications are MPEC version of that introduced in \cite{bertsekas2}  for standard nonlinear programs. In \cite{ye}, the results of  \cite{Kanzow1} have been extended to the nonsmooth case. In \cite{Kanzow1}, it has been shown that pseudonormality is a sufficient condition for the existence of \emph{local error bound} for MPEC. Whereas in \cite{ye}, it has been improved to the nonsmooth case by showing that the MPEC-generalized quasinormality is sufficient condition for the same under fairly mild assumptions because pseudonormality implies quasinormality. The MPVC-\emph{generalized pseudonormality} has been introduced in \cite{hu}, we introduce a new constraint qualification: MPVC-\emph{generalized quasinormality}, its MPEC variant is known. We prove that an enhanced M-stationary condition holds under this constraint qualification.\\

     In recent years, It has been shown that  many constraint qualifications such as \emph{pseudonormality} and \emph{quasinormality} \cite{ye 1,bertsekas2}, \emph{constant
positive linear dependence} (CPLD), see  \cite{qi} and \emph{relaxed constant positive linear
dependence} (RCPLD), see \cite{andreani},  all  to be weaker than MFCQ. Following MPVC-CPLD defined in \cite{hohei con}, we show that it is stronger than the MPVC-generalized quasinormality.
Further, we show that the MPVC-CPLD is also a constraint
qualification for the enhanced M-stationarity and provides a sufficient condition for
the existence of a local error bound for the MPVC.\\\\

\hspace*{0.5cm} We organize the present paper as follows. In section 2, we recall some well-known MPVC tailored constraint qualifications. We also recall some well-known stationarity notions and few definitions from nonsmooth analysis. Section 3 is devoted to Fritz John type stationary conditions, which lead to KKT type stationary conditions under suitable constraint qualifications. In section 4, enhanced Fritz John type stationary conditions are investigated in depth. As a result, KKT type enhanced stationary conditions are discussed under the various known and a new constraint qualification: MPVC-generalized quasinormality. Further, it is shown that MPVC-CPLD is stronger than MPVC-generalized quasinormality. The section 4 discusses the error bound results and finally we provide concluding remarks in section 5.

\section{Preliminaries}\label{sec:Prelim}
Here, we provide some relevant definitions and background material for the MPVC formulated in (\ref{eq1.2}), that will be used in subsequent sections of this paper. For an arbitrary feasible point $x^\ast$  we adopt the following notations for index sets from \cite{hoheisel 2,hu}, which are analogous to MPEC \cite{Kanzow1,ye}. We define the index sets as follows
\begin{eqnarray*}
		I_g(x^\ast) := \{i ~|~ g_i(x^\ast) = 0\}, ~~ I_{+}(x^\ast) := \{i~|~H_i(x^\ast) > 0\}, ~{\rm and~} I_{0}(x^\ast) := \{i~|~H_i(x^\ast) = 0\}	.	
	\end{eqnarray*}
The index set $I_{+}(x^\ast)$ is further divided into two subsets
\begin{eqnarray*}
		I_{+0}(x^\ast) &:=& \{i~|~H_i(x^\ast) > 0 ~,~ G_i(x^\ast) = 0\},\\
		I_{+-}(x^\ast) &:=& \{i~|~H_i(x^\ast) > 0 ~,~ G_i(x^\ast) < 0\}
\end{eqnarray*}
and the set $I_{0}(x^\ast)$ can be partitioned as follows
\begin{eqnarray*}
	    I_{00}(x^\ast) &:=& \{i~|~H_i(x^\ast) = 0 ~,~ G_i(x^\ast) = 0\},\\
		I_{0+}(x^\ast) &:=& \{i~|~H_i(x^\ast) = 0 ~,~ G_i(x^\ast) > 0\},\\
		I_{0-}(x^\ast) &:=& \{i~|~H_i(x^\ast) = 0 ~,~ G_i(x^\ast) < 0\}.
	\end{eqnarray*}
If the concerned point is understood, we denote the index sets simply by $I_g,~ I_{+}, ~I_{+0},~I_{00}$ and so on.\\
\hspace*{0.5cm}Now, we recall  some standard constraint qualifications for MPVC based on these notations. The following two constraint qualifications were formally introduced in \cite{hoheisel 2}.
	\begin{Definition}
	\label{Def1} A vector $x^\ast \in \mathcal{C}$ is said to satisfy MPVC-linearly independent constraint qualification  (or MPVC-LICQ) if the gradients \\
	$~~~~~~~~~~~\{\nabla g_i(x^\ast) | i \in I_g(x^\ast)\} ~\cup ~ \{\nabla h_i(x^\ast) | i = 1,...,p\} ~\cup ~ \{\nabla G_i(x^\ast) | i \in I_{+0}(x^\ast)~\cup~ I_{00}(x^\ast)\} \\~~~~~~~~~~~ \cup~ \{\nabla H_i(x^\ast) | i \in I_0(x^\ast)\} $ \\
	are linearly independent.
	\end{Definition}
	 \begin{Definition}
\label{Def2}
	 	  A vector $x^\ast \in \mathcal{C}$ is said to satisfy MPVC-Mangasarian Fromovitz constraint qualification (or MPVC-MFCQ) if the \\
	 	$~~~~~~~~~~~~~~\nabla h_i(x^\ast) ~~(i= 1, ..., p),~~~~\nabla H_i(x^\ast)~~(i \in I_{0+}(x^\ast) \cup I_{00}(x^\ast))$\\
	 	are linearly independent and there exists a vector $d \in \mathbb{R}^n$ such that \\
	 	$~~~~~~~~\nabla h_i(x^\ast)d =0 ~~~(i=1,...,p), ~~~~\nabla H_i (x^\ast)^T d = 0 ~~~(i \in I_{0+}(x^\ast) \cup I_{00}(x^\ast) ),\\
	 	~~~~~~~\nabla g_i (x^\ast)^T d < 0 ~~~(i \in I_g(x^\ast)), ~~~~\nabla H_i (x^\ast)^T d > 0 ~~~(i \in I_{0-}(x^\ast)),\\
	 ~~~~~~~	\nabla G_i(x^\ast)^T d < 0 ~~~(i \in I_{+0}(x^\ast) \cup I_{00}(x^\ast))$.
	 \end{Definition}
It has been seen that these constraint qualifications are very useful. In \cite [Theorem 4.5]{hoheisel}, MPVC-MFCQ is shown to be a sufficient condition for exactness of MPVC-exact penalty function.\\
\hspace*{0.5 cm} In the spirit of MPEC-GMFCQ  \cite{ye 3}, the following MPVC-GMFCQ was introduced in \cite{hu}, and has been shown to be a key in exact penalty results.
	\begin{Definition}
	\label{Def3} A vector $x^\ast \in \mathcal{C}$ is said to satisfy MPVC-generalized MFCQ (or MPVC-GMFCQ) if there is no multiplier $(\lambda, \mu, \eta^H, \eta^G)~\neq~0$ such that \\
	
	$ {\rm (i)}~\sum_{i=1}^{m} \lambda_i \nabla g_i(x^\ast) + \sum_{i=1}^{p} \mu_i \nabla h_i (x^\ast) + \sum_{i=1}^{q} \eta^G_i \nabla G_i(x^\ast)
	-\sum_{i=1}^{q} \eta^H_i \nabla H_i(x^\ast)~=~0,$\\

$ {\rm (ii)} ~~~~\lambda_i \geqslant 0 ~~\forall~i \in I_g(x^\ast),~~~\lambda_i = 0 ~~~\forall~i \notin I_g(x^\ast)$
\begin{eqnarray*}
{\rm and}~~~\eta_i^G &=& 0 ~~\forall~i \in I_{+-}(x^\ast) \cup I_{0-}(x^\ast) \cup I_{0+}(x^\ast),~~\eta_i^G  \geqslant 0~\forall i \in I_{+0}(x^\ast) \cup I_{00}(x^\ast),\\
\eta_i^H &=& 0 ~~\forall~i \in I_+(x^\ast),~~\eta_i^H  \geqslant  0~\forall ~i \in I_{0-}(x^\ast)~~ {\rm and}~~\eta_i^H ~ {\rm is}~ {\rm free}~ \forall~ i \in I_{0+}(x^\ast),\\
\eta_i^H  \eta_i^G &=& 0~~\forall~ i \in I_{00}(x^\ast).
\end{eqnarray*}
\end{Definition}
\begin{Remark}	In {\rm \cite[Proposition 2.1]{ye 3}}, it has been established that MPEC-generalized MFCQ is equivalent to NNAMCQ. Analogously, we can show it for MPVC, if NNAMCQ is defined for MPVC analogous to MPEC notion. So we can identify  MPVC-GMFCQ and MPVC-NNAMCQ to be the same.
\end{Remark}
We note from \cite[Proposition 2.1]{hu} that following implications hold:\\\\
	\hspace*{5 cm}MPVC-LICQ $\Rightarrow$ MPVC-MFCQ $\Rightarrow $ MPVC-GMFCQ.\\\\
\hspace*{0.5 cm}The following stationarity concepts are widely studied in the literature \cite{achtziger,hoheisel 2,hoheisel} and known to be important optimality conditions for the MPVC. These stationary conditions differ only for the multipliers associated with the indices in $I_{00}$, see \cite{Dussault}.
 	\begin{Definition}
 	\textbf{W-Stationary Condition:} Any feasible point $x^\ast$ for $(P)$ is called \textit{W-Stationary point} for MPVC, if there is a multiplier $(\lambda, \mu, \eta^G, \eta^H)$ such that
 	\begin{eqnarray*}
 	0~=~\nabla f(x^\ast)~+~\sum_{i \in I_g}^{} \lambda_i \nabla g_i(x^\ast)~+~ \sum_{i \in I_h}^{} \mu_i \nabla h_i(x^\ast)~+~\sum_{i=1}^{q} \eta_i^G \nabla G_i(x^\ast)~-~\sum_{i=1}^{q} \eta_i^H \nabla H_i(x^\ast)
 	\end{eqnarray*}
\begin{eqnarray*}
 {and}~~~\lambda_i  &\geqslant & 0 ~~\forall~i \in I_g(x^\ast),~~
 		\lambda_i = 0~~\forall~i \notin I_g(x^\ast),\\	
	\eta_i^G &=& 0 ~~\forall~i \in I_{+-}(x^\ast) \cup I_{0-}(x^\ast) \cup I_{0+}(x^\ast),~~\eta_i^G  \geqslant 0~\forall i \in I_{+0}(x^\ast) \cup I_{00}(x^\ast),\\
\eta_i^H &=& 0 ~~\forall~i \in I_+(x^\ast),~~\eta_i^H  \geqslant  0~\forall ~i \in I_{0-}(x^\ast)~~ {\rm and}~~\eta_i^H ~ {\rm is}~ { \rm free}~ \forall~ i \in I_{0+}(x^\ast).\\
\end{eqnarray*}
{\rm Now $x^\ast$ becomes}
\begin{enumerate}
\item \textbf{M-Stationarity:} If at $x^\ast$ W-Stationarity holds with
\begin{eqnarray*}
\eta_i^H  \eta_i^G  =  0~~\forall~ i \in I_{00}(x^\ast).
\end{eqnarray*}
\item \textbf{S-Stationarity:} If at $x^\ast$ W-Stationarity holds and
\begin{equation*}
\eta_i^H ~\geqslant~0 , \eta_i^G =0~~\forall~ i \in I_{00}(x^\ast).
\end{equation*}
\end{enumerate}
\end{Definition}
``M-stationarity'' is the most important stationarity concept from a theoretical perspective as it holds under minimal assumption. There is another stationarity concept also, such as T-\emph{stationarity},  see \cite{Dussault}. The T-\emph{stationarity} is a counterpart of C-\emph{stationarity} for MPEC. It is easy to see that the W- and T- stationarity notions are weaker than M-stationarity.  They all differ in the sign of
$\eta_i^H  $ and $\eta_i^G$  for the indices $i \in I_{00}(x^\ast)$. In this paper,  the W- and T- stationarity will play no role, viewing them typically as too weak for our purposes.\\

  Here, it is worth mentioning that the only difference between the \textit{S-Stationarity} (which is equivalent to standard KKT condition) and \textit{M-Stationarity} is in the multiplier $\eta_i^G , ~i \in I_{00}$. In \textit{S-Stationarity} $\eta_i^G = 0, ~\forall ~i \in I_{00}$, whereas in \textit{M-Stationarity} $\eta_i^G \geq 0, ~\forall ~i \in I_{00}$. The positive $\eta_i^G , ~i \in I_{00}$ in \textit{M-Stationarity} may play some significant role in MPVC-analysis. Clearly, \textit{M-Stationarity} is slightly weaker than \textit{S-Stationarity}. Further, \textit{S-Stationarity} holds at least under MPVC-LICQ, while \textit{M-Stationarity} holds under some weaker assumptions called MPVC-tailored CQ's like MPVC-GCQ or MPVC-MFCQ. \\

  Now, we recall some basic tools from the nonsmooth analysis. We give only concise definitions and results that will be used later to prove our main results. We refer to Mordukhovich \cite{mordu}, Rockafellar and Wets \cite{Rocka} and Clarke \cite{clarke} for more detailed information on the subject.\\\\
\hspace*{0.5 cm} First, we mention that throughout the paper, the following notations will be used. The symbol $\langle. ,.\rangle$,  $||.||$ and $||.||_1$ denote the standard inner product, Euclidean and max norm on $\mathbb R^n$ respectively. For a function $g:\mathbb{R}^n \rightarrow \mathbb{R},~g^+(x):= \max \{0, g(x)\}$, here $g^+$ denotes a vector if $\max$ function is defined componentwise.\\\\
\hspace*{0.5cm} Here are some definitions of various cones, which are known to be important tools in variational analysis.

\begin{Definition}{\rm
\begin{enumerate}
\item Let $C \subset \mathbb{R}^n$ be a nonempty set. The  \emph{polar cone} of $C$ is defined as
	  \begin{equation*}
 	C^\circ := \{ s \in  \mathbb{R}^n ~|~ s^Td ~\leqslant~ 0 ~~ \forall~ d \in C \}.
	  \end{equation*}
	  \item Let $C \subset \mathbb{R}^n$ be a nonempty closed set and $x^\ast \in C$. The (Bouligand)  \emph{tangent cone} (or {\rm contingent cone}) of $C$ at $x^\ast$ is defined as
\begin{eqnarray*}
	  	T_C(x^\ast) := \{d \in \mathbb{R}^n ~|~ \exists \{x^k\} \rightarrow_C x^\ast, ~\{t_k\} \downarrow 0 ~:~\frac{x^k - x^\ast}{t_k} \rightarrow d \}
\end{eqnarray*}
	  where $\{x^k\} \rightarrow_C x^\ast$ denotes a sequence $\{x^k\}$ converging to $x^\ast$ and satisfying $x^k \in C ~\forall~ k \in \mathbb{N}$.
\end{enumerate}}
 \end{Definition}
 \begin{Definition} {\rm
 Let $C \subset \mathbb{R}^n$ be a nonempty closed set and $x^\ast \in C$.
 \begin{enumerate}
\item The \emph{Fr\'echet normal cone} of $C$ at $x^\ast$ is defined as
	  \begin{equation*}
	  N_C^F(x^\ast) := T_C(x^\ast)^\circ .
	  \end{equation*}
\item    The convex cone
\begin{equation*}
  N_C^{\pi}(x^\ast) :=  \{ d \in \mathbb R^n | \exists \sigma > 0  ~ s. t.~ \langle d,x - x^{\ast} \rangle \leqslant \sigma  { ||x - x^{\ast} ||}^2~ \forall ~ x \in C \}
\end{equation*}
  is called the  \emph{proximal normal cone} to $C$ at  $x^\ast$.
\item  The \emph{limiting normal cone} of $C$ at $x^\ast$ is defined as	
\begin{eqnarray}
\label{eq1.4}
\nonumber N_C(x^\ast)&:=&\{d \in \mathbb{R}^n ~|~\exists \{x^k\} \rightarrow_C x^\ast, d^k \in N^F_C(x^k)~:~ d^k \rightarrow d \}\\
\nonumber &=&  \{d \in \mathbb{R}^n ~|~\exists \{x^k\} \rightarrow_C x^\ast, d^k \in N^{\pi}_C(x^k)~:~ d^k \rightarrow d \}.
	\end{eqnarray}
	The last identity can be seen in \cite{Rocka}.
\end{enumerate}
	 } \end{Definition}	
\section{Fritz John type Stationary Conditions}
\label{fjsc}
	To obtain  first-order necessary optimality conditions for a standard nonlinear program, usually three different approaches are available in the literature:  (i) geometrical approach (tangent cone criterion), (ii) exact penalization approach \cite{clarke}  and (iii) Fritz John conditions. The third approach is preferred, sometimes, over others, since it does not require any constraint qualification. But, it has a  disadvantage, it involves Lagrange multiplier associated to the objective function, which may be zero. However, this disadvantage, innovatively suggests new constraint qualifications, so that KKT conditions are satisfied, see \cite{manga,bert,ye 1}. However, they lead to same KKT conditions under suitable constraint qualifications. It has been verified in MPEC case also for important stationarity conditions,  see \cite {flegel,ye 3,Kanzow1}. So, we have enough motivation to approach the MPVC via Fritz John  conditions. In \cite[Theorem 2.1]{ye 3}, Ye has derived a Fritz John type M-stationary condition for MPEC.  Analogously, we do this for MPVC by reformulating it into an equivalent form EMPVC.
\begin{Theorem}\textbf{A Fritz John type M-stationary condition:} Let $x^\ast$ be a local minimum of MPVC, then there exist  $\alpha \geqslant 0~ {\rm and} ~\lambda, ~\mu, ~\eta^H, ~\eta^G$, not all zero, such that \\
 $ {\rm (i)} ~~\alpha \nabla f(x^\ast) ~+~ \sum_{i=1}^{m} \lambda_i \nabla g_i(x^\ast) ~+~ \sum_{i=1}^{p} \mu_i \nabla h_i (x^\ast) ~+~ \sum_{i=1}^{q} \eta^G_i \nabla G_i(x^\ast)~-~ \sum_{i=1}^{q} \eta^H_i \nabla H_i(x^\ast)~=~0 $;\\\\
$ {\rm (ii)} ~~~~~\lambda_i \geqslant 0 ~~~\forall~i \in I_g(x^\ast),~~~\lambda_i = 0 ~~~\forall~i \notin I_g(x^\ast)$
\begin{eqnarray*}
{and}~~~\eta_i^G &=& 0 ~~\forall~i \in I_{+-}(x^\ast) \cup I_{0-}(x^\ast) \cup I_{0+}(x^\ast),~~\eta_i^G  \geqslant 0~\forall i \in I_{+0}(x^\ast) \cup I_{00}(x^\ast),\\
\eta_i^H &=& 0 ~~\forall~i \in I_+(x^\ast),~~\eta_i^H  \geqslant  0~\forall ~i \in I_{0-}(x^\ast)~~ {\rm and}~~\eta_i^H ~ {\rm is}~ { \rm free}~ \forall~ i \in I_{0+}(x^\ast),\\
\eta_i^H  \eta_i^G &=& 0~~\forall~ i \in I_{00}(x^\ast).
\end{eqnarray*}
	\end{Theorem}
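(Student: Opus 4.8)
The plan is to rewrite the MPVC as a single geometric constraint and then apply a Fritz John multiplier rule expressed through limiting normal cones, reading off the sign and complementarity conditions from a pointwise normal-cone computation. First I would gather the constraints into the map
\[
F(x):=\bigl(g_1(x),\dots,g_m(x),h_1(x),\dots,h_p(x),G_1(x),H_1(x),\dots,G_q(x),H_q(x)\bigr)
\]
and put $\Lambda:=\mathbb{R}_-^m\times\{0\}^p\times\Omega^q$, where $\Omega:=\{(a,b)\in\mathbb{R}^2\mid b\geqslant 0,\ ab\leqslant 0\}$. The set $\Omega$ is closed, hence so is $\Lambda$, and a point $x$ is feasible for the MPVC exactly when $F(x)\in\Lambda$. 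This is the equivalent reformulation EMPVC referred to above, chosen so that the disjunctive vanishing constraints are encoded in the (nonconvex) reentrant corner geometry of $\Omega$.

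The second step is the Fritz John rule for $\min\{f(x)\mid F(x)\in\Lambda\}$: at a local minimizer $x^\ast$ there exist $\alpha\geqslant 0$ and $\zeta\in N_\Lambda(F(x^\ast))$, not both zero, with $\alpha\nabla f(x^\ast)+\nabla F(x^\ast)^{\top}\zeta=0$. I would obtain this by exact penalization in the spirit of the penalty approach recalled above: $x^\ast$ is a local minimizer of $f(x)+c_k\,\mathrm{dist}(F(x),\Lambda)+\tfrac12\|x-x^\ast\|^2$ for penalty parameters $c_k\uparrow\infty$, so the limiting subdifferential chain rule for the composite distance function yields stationary points $x^k\to x^\ast$; normalizing the resulting multiplier vectors and passing to the limit, using the outer semicontinuity of $x\mapsto N_\Lambda(F(x))$, delivers the nonzero pair $(\alpha,\zeta)$. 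Since the limiting normal cone of a Cartesian product of closed sets is the product of the factor normal cones, $\zeta$ splits into blocks $\lambda_i$ (for the $g_i$), $\mu_i$ (for the $h_i$), and pairs $(u_i,v_i)\in N_\Omega(G_i(x^\ast),H_i(x^\ast))$. Setting $\eta^G_i:=u_i$ and $\eta^H_i:=-v_i$ turns the stationarity equation into condition (i), and $(\alpha,\zeta)\neq 0$ becomes $(\alpha,\lambda,\mu,\eta^G,\eta^H)\neq 0$ because this relabeling is a linear bijection.

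It then remains to compute $N_\Omega$ at each relevant point. For the $g$-block, $N_{\mathbb{R}_-}$ gives $\lambda_i\geqslant 0$ when $i\in I_g(x^\ast)$ and $\lambda_i=0$ otherwise; for the $h$-block, $N_{\{0\}}=\mathbb{R}$ leaves $\mu_i$ free. For the pairs, computing the Fréchet normal cone at the nearby boundary strata shows: at $i\in I_{+-}$ the point lies in $\mathrm{int}\,\Omega$ so $N_\Omega=\{(0,0)\}$; at $i\in I_{+0}$ (locally the half-space $a\leqslant 0$) we get $N_\Omega=\mathbb{R}_+\times\{0\}$; at $i\in I_{0+}$ (locally the line $b=0$) we get $N_\Omega=\{0\}\times\mathbb{R}$; and at $i\in I_{0-}$ (locally the half-space $b\geqslant 0$) we get $N_\Omega=\{0\}\times\mathbb{R}_-$. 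Under $\eta^G_i=u_i,\ \eta^H_i=-v_i$ these reproduce precisely the sign pattern in condition (ii) for indices outside $I_{00}$.

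The main obstacle is the corner case $i\in I_{00}$, i.e.\ the limiting normal cone of the nonconvex set $\Omega$ at the origin. Here the Fréchet normal cones at the neighbouring boundary strata disagree, so one must take their outer limit: the points of type $I_{+0}$ contribute $\mathbb{R}_+\times\{0\}$ and those of type $I_{0+}$/$I_{0-}$ contribute $\{0\}\times\mathbb{R}$, giving
\[
N_\Omega(0,0)=\bigl(\mathbb{R}_+\times\{0\}\bigr)\cup\bigl(\{0\}\times\mathbb{R}\bigr).
\]
Translating through $\eta^G_i=u_i,\ \eta^H_i=-v_i$, every $(u_i,v_i)$ in this set satisfies $\eta^G_i\geqslant 0$ together with $\eta^G_i\eta^H_i=0$, which is exactly the $I_{00}$ part of condition (ii) and the source of M-stationarity. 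This step is delicate precisely because it relies on the \emph{limiting} (not the convexified or Fréchet) normal cone at the corner: a convex surrogate would only yield the weaker W-stationarity, so the nonsmooth limiting calculus is what upgrades the conclusion to an M-stationary Fritz John condition.
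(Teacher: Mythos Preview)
Your proposal is correct and follows essentially the same strategy as the paper: reformulate the vanishing constraints through the closed set $\Omega=\{(a,b)\mid b\geqslant 0,\ ab\leqslant 0\}$, invoke a limiting-normal-cone Fritz John multiplier rule, and read off the M-stationary sign pattern from the corner computation $N_\Omega(0,0)=(\mathbb{R}_+\times\{0\})\cup(\{0\}\times\mathbb{R})$. The only cosmetic difference is that the paper lifts to $(x,y,z)$-space via slack variables $y=H(x),\ z=G(x)$ and applies Mordukhovich's multiplier rule \cite[Theorem~5.11]{mordukhovich} directly in $\mathbb{R}^{n+2q}$, whereas you stay in $\mathbb{R}^n$ with the composite constraint $F(x)\in\Lambda$ and sketch a penalty derivation of the rule; the decisive normal-cone calculation is identical in both.
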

	\begin{proof}
	We consider MPVC in an equivalent form, called EMPVC  defined as follows:
\begin{eqnarray*}
\label{eq1.1}
  {\rm min} ~~ f(x)\\
 {\rm s.t.}~~ g(x) &\leqslant&  0, \\
 h(x) &=&  0, \\
H(x)-y &=&  0, \\
G(x)-z &=&  0,\\
 (x,y,z)&\in& \Omega
	\end{eqnarray*}
where~$\Omega = \{(x,y,z) \in \mathbb{R}^{n+2q}~|~y_i \geqslant 0,~  {z_i} y_i \leqslant 0  ,~~\forall i=1,...,q \}$.\\\\
Clearly, $\Omega$ is a nonempty closed set. It is an optimization problem with equalities, inequalities and an abstract set constraint, suppose we have a local minimum for this problem at $(x^\ast, ~y^\ast, ~z^\ast)$, then
		$$~~~~~~~~~~~~~~~~~~~~~~~~~y^\ast ~=~H(x^\ast)~~~~~~{\rm and}~~~~~~z^\ast ~=~G(x^\ast).$$
Applying the Lagrange multiplier rule obtained by Mordukhovich \cite[Theorem 5.11]{mordukhovich} (see also \cite[Corollary 6.15]{Rocka}), we have $\alpha \geqslant 0,~ (\alpha, \lambda, \mu, \eta^H,\eta^G)\neq 0 $ ~such that
		 \begin{eqnarray*}
		 	\label{eq:F-N-cone}
		 	\left( \begin{array}{ccc}
		 		0 \\0 \\0 \end{array}\right)
		 	\nonumber  & =   &   \alpha \left( \begin{array}{ccc}
		 		\nabla f(x^\ast)\\0\\0
		 	\end{array}\right)  + \sum_{i\in I_g(x^\ast)}\lambda_i \left( \begin{array}{ccc} \nabla g_i (x^\ast)\\0\\0 \end{array}\right)\\
		 	\nonumber  & + &  \sum_{i=1}^{p} \mu_i \left( \begin{array}{ccc}
		 		\nabla h_i(x^\ast)\\0\\0 \end{array}\right) - \sum_{i=1}^{q} \eta_i^H\left( \begin{array}{ccc} \nabla H_i(x^\ast)\\-e_i\\0 \end{array}\right)\\
		 	\textbf{}\nonumber  & - &   \sum_{i=1}^{q} \eta_i^G \left( \begin{array}{ccc} \nabla G_i(x^\ast)\\0\\-e_i \end{array}\right) +  \left( \begin{array}{ccc}
		 		0\\\xi\\\zeta \end{array}\right)   \,\,\,\,\,\,\
		 	\\
		 \end{eqnarray*}
		 and
		 \begin{equation*}
		  \lambda_{i}~\geq~0 ~~\forall ~ i \in I_g(x^\ast)
		 \end{equation*}
		 where $(0, \xi , \zeta)^T \in N_\Omega (x^\ast, y^\ast, z^\ast)$, the limiting normal cone of $\Omega$ at $(x^\ast, y^\ast, z^\ast)$ and  is given as
		 \begin{equation*}
		 N_\Omega (x^\ast, y^\ast, z^\ast) = \left\{ \left( \begin{array}{ccc}
		 0\\ \xi\\ \zeta
		 \end{array}\right) \Bigg | \begin{array}{ccc}
		 \xi_i = ~0~ =\zeta_i &;& {\rm if} ~ y^\ast_i >0,~z^\ast_i <0,\\
		 \xi_i =0 ,~ \zeta_i \geqslant 0 &;& {\rm if} ~y^\ast_i > 0, ~ z^\ast_i=0, \\
		  \zeta_i \geqslant 0, \xi_i \cdot \zeta_i = 0 &;&{\rm if} ~y^\ast_i ~=~0~= ~z^\ast_i,\\
		 \xi_i \leqslant 0 ,~ \zeta_i = 0 &;&{\rm if} ~y^\ast_i = 0, ~z^\ast_i <0,\\
		 \xi_i \in \mathbb{R} ,~ \zeta_i = 0 &;&{\rm if} ~y^\ast_i = 0, ~z^\ast_i > 0
		 \end{array}
		 \right \},
		 \end{equation*}
		 see \cite[Lemma 4.1]{hoheisel} and \cite[Lemma 3.2]{hoheisel 2} for the detailed computation. Hence, we obtain
		 \begin{equation*}
		 0~=~\alpha \nabla f(x^\ast) + \sum_{i \in I_g}^{} \lambda_i \nabla g_i(x^\ast) + \sum_{i=1}^{p} \mu \nabla h_i(x^\ast) - \sum_{i=1}^{q} \eta_i^H \nabla H_i(x^\ast) - \sum_{i=1}^{q} \eta_i^G \nabla G_i(x^\ast)
		 \end{equation*}
		 and
		 \begin{equation*}
		 0~=~\eta_i^H + \xi_i ~~\Rightarrow~~\eta_i^H ~=~-\xi_i,
		 \end{equation*}
		 \begin{equation*}
		 0~=~\eta_i^G + \zeta_i ~~\Rightarrow~~\eta_i^G ~=~-\zeta_i
		 \end{equation*}
		 now restriction on $\xi , ~\zeta$ in limiting normal cone yields the required result for multipliers, i.e. $\eta_i^H = 0 ~~\forall~i \in I_{+-}(x^\ast) \cup I_{+0}(x^\ast),~~~\eta_i^G = 0~~\forall~i \in I_{+-}(x^\ast) \cup I_{0-}(x^\ast) \cup I_{0+}(x^\ast) ~~{\rm and}~ \eta_i^G \leqslant 0 ~~\forall i \in I_{+0}(x^\ast),~~~\eta_i^H \geqslant 0~~\forall ~i \in I_{0-}(x^\ast)$ and for all $i \in I_{00}(x^\ast),~~\eta_i^G \leqslant 0$, $\eta_i^G \eta_i^H = 0$. Taking $-\eta_i^G$ in place of $\eta_i^G$, we get the desired conditions (i) and (ii) of the Theorem.
		\end{proof}
The proof of the following Theorem follows from a simple corollary of Theorem 4.2 established later in section 4. We have written this Theorem here to provide relevant results at one place.	
			\begin{Theorem}
\label{thm3.2}
				\textbf{A Fritz John type S-stationary condition:} Let $x^\ast$ be a local minimum of MPVC, then there exist  $\alpha \geqslant 0~ {\rm and} ~\lambda, ~\mu, ~\eta^H, ~\eta^G$, not all zero, such that\\
				$ {\rm (i)} ~~\alpha \nabla f(x^\ast) ~+~ \sum_{i=1}^{m} \lambda_i \nabla g_i(x^\ast) ~+~ \sum_{i=1}^{p} \mu_i \nabla h_i (x^\ast) ~+~ \sum_{i=1}^{q} \eta^G_i \nabla G_i(x^\ast)
				~-~ \sum_{i=1}^{q} \eta^H_i \nabla H_i(x^\ast)~=~0 $;\\\\
				${\rm (ii)}~~~ \lambda_i \geqslant 0 ~~\forall i \in I_g(x^\ast),~~
 		\lambda_i=0 ~\forall i \notin I_g(x^\ast)$
				\begin{eqnarray*}
 	{and}~~~\eta_i^G &=& 0 ~~~~\forall~i \in I_{+-}(x^\ast) \cup I_{0-}(x^\ast) \cup I_{0+}(x^\ast),~~\eta_i^G  \geqslant 0~\forall i \in I_{+0}(x^\ast) \cup I_{00}(x^\ast),\\
\eta_i^H &=& 0 ~~~~\forall~i \in I_+(x^\ast),~~\eta_i^H  \geqslant  0~\forall ~i \in I_{0-}(x^\ast)~~ {\rm and}~~\eta_i^H ~ {\rm is}~ { \rm free}~ \forall~ i \in I_{0+}(x^\ast),\\
 	\eta_i^H &\geqslant& 0,~\eta_i^G = 0  ~~~~\forall~i \in I_{00}(x^\ast).
 \end{eqnarray*}
\end{Theorem}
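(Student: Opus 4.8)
The plan is to run the same reformulation as in Theorem~3.1, replacing the limiting normal cone of $\Omega$ by its Fr\'echet counterpart, since the passage from M- to S-stationarity affects only the multipliers indexed by $I_{00}(x^\ast)$. Thus I would again write MPVC as the lifted problem EMPVC with the abstract constraint $(x,y,z) \in \Omega$, observe that a local minimizer $x^\ast$ lifts to a local minimizer $(x^\ast, y^\ast, z^\ast)$ with $y^\ast = H(x^\ast)$ and $z^\ast = G(x^\ast)$, and invoke a Lagrange multiplier rule for this smooth-plus-abstract system. The one change is that I would aim to land the abstract multiplier in the \emph{Fr\'echet} normal cone $N_\Omega^F(x^\ast, y^\ast, z^\ast)$ rather than in $N_\Omega(x^\ast, y^\ast, z^\ast)$, so that $\alpha \geqslant 0$ together with $\lambda, \mu, \eta^H, \eta^G$, not all zero, satisfy (i) and a Fr\'echet version of the sign restrictions encoded by $(0,\xi,\zeta) \in N_\Omega^F$.

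The second step is a componentwise computation of $N_\Omega^F(x^\ast, y^\ast, z^\ast)$. Since $\Omega$ splits over the index $i$ and is a cone in each $(y_i, z_i)$-plane, $N_\Omega^F = T_\Omega^\circ$ can be read off directly. For every index outside $I_{00}(x^\ast)$ the set $\Omega$ is locally smooth (a half-space or a line) and its Fr\'echet normal cone coincides with the limiting normal cone already computed in the proof of Theorem~3.1, so those multiplier signs are unchanged. At an index $i \in I_{00}(x^\ast)$, where $y_i^\ast = z_i^\ast = 0$, the branch $\{y_i = 0\}$ of $\Omega$ forces $\zeta_i = 0$, and the branch $\{y_i \geqslant 0,\ z_i \leqslant 0\}$ then forces $\xi_i \leqslant 0$, so $N_\Omega^F$ sharpens the limiting cone to $\{\xi_i \leqslant 0,\ \zeta_i = 0\}$. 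Translating through $\eta_i^H = -\xi_i$, $\eta_i^G = -\zeta_i$ and the final sign flip $\eta^G \mapsto -\eta^G$ exactly as in Theorem~3.1 yields $\eta_i^H \geqslant 0$ and $\eta_i^G = 0$ for $i \in I_{00}(x^\ast)$, which is the S-stationary structure in (ii); the remaining conditions are inherited verbatim.

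The hard part is the first step: obtaining the multiplier in the \emph{Fr\'echet} rather than the limiting normal cone of $\Omega$. The rule of Mordukhovich used in Theorem~3.1 only delivers a limiting normal $(0,\xi,\zeta) \in N_\Omega$, and $N_\Omega^F$ cannot be substituted for free, since $\Omega$ is not (Clarke) regular at the $I_{00}$-corners and its Fr\'echet normal cone obeys no exact sum or intersection calculus there. This is precisely the gap that separates S- from M-stationarity. I would attempt to close it via the enhanced Fritz John condition of Theorem~4.2, obtained from a Bertsekas-type sequential penalization of EMPVC that produces approximate stationary points carrying genuine Fr\'echet normals of $\Omega$. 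The delicate point is that an outer limit of Fr\'echet normals is only a limiting normal, so one must control the branch on which the penalized iterates sit along $I_{00}$ (or carry along the extra enhanced conditions) in order to retain the $\{\xi_i \leqslant 0,\ \zeta_i = 0\}$ description in the limit. I expect this passage $k \to \infty$, and the preservation of regularity of the normals through it, to be the main and genuinely delicate obstacle, and the place where the strength of the S-stationary claim has to be earned rather than inherited from the limiting-normal argument.
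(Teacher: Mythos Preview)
Your proposal is correct and matches the paper's route exactly: the paper states Theorem~3.2 without a direct proof and derives it as Corollary~4.1 of Theorem~4.2, the enhanced Fritz John S-stationary condition obtained from the Bertsekas--McShane penalization of EMPVC, precisely the mechanism you single out in your third paragraph. The ``hard part'' you flag is handled in the paper just as you anticipate: the Fr\'echet normal cone is invoked not at $(x^\ast,y^\ast,z^\ast)$ but at each approximate minimizer $(x^k,y^k,z^k)$ (where it is available for free, since these are genuine local minima of the smooth $F_k$ over the closed set $\Omega$), and the S-stationary sign pattern on $I_{00}(x^\ast)$ survives the passage $k\to\infty$ because one tracks the explicit penalty multipliers $\eta_i^{H^k}=k(y_i^k-H_i(x^k))/\delta_k$, $\eta_i^{G^k}=k(z_i^k-G_i(x^k))/\delta_k$ through a case-by-case computation rather than relying on any abstract outer-semicontinuity of $N_\Omega^F$.
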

		
		Now, using this result we can establish some-well known results of MPEC for MPVC also. As in \cite[Corollary 2.1, Proposition 2.1]{ye 3}, Ye showed that any local optimizer of MPEC also becomes M-stationarity under (NNAMCQ or MPEC-GMFCQ). Here, first,  we prove M-stationarity  for MPVC under the constraint qualification MPVC-MFCQ. Although this result has been established for MPVC in \cite[Corollary 5.3]{hoheisel} under the exact penalty condition at the local minimizer, we are relaxing this exactness condition and using a different approach to prove the  result, which is independent and easier than \cite{hoheisel}. Indeed, our proof also shows that MPVC-MFCQ need not provide S-stationarity condition.
		\begin{Theorem}
			\textbf{M-stationary conditions:}
			Suppose  MPVC-MFCQ holds at a local minimizer $x^\ast$ of MPVC. Then $x^\ast$ will be an M-stationary point.
		\end{Theorem}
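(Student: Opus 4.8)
The plan is to invoke Theorem 3.1 (the Fritz John type M-stationary condition) at $x^\ast$ and then rule out the degenerate case $\alpha = 0$ by a contradiction argument driven by MPVC-MFCQ. Once $\alpha > 0$ is secured, dividing the stationarity relation through by $\alpha$ delivers the M-stationary conditions, since every sign restriction and the complementarity $\eta_i^H \eta_i^G = 0$ on $I_{00}$ are invariant under scaling by a positive scalar.

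First I would apply Theorem 3.1 to obtain $\alpha \geqslant 0$ together with multipliers $(\lambda, \mu, \eta^H, \eta^G)$, not all zero, satisfying (i) and (ii). Suppose, toward a contradiction, that $\alpha = 0$, and let $d$ be the direction furnished by MPVC-MFCQ. Taking the inner product of (i) with $d$ and using $\nabla h_i(x^\ast)^T d = 0$ makes the $\mu$-terms vanish, leaving
\[
\sum_{i \in I_g} \lambda_i \nabla g_i(x^\ast)^T d \;+\; \sum_{i} \eta_i^G \nabla G_i(x^\ast)^T d \;-\; \sum_{i} \eta_i^H \nabla H_i(x^\ast)^T d \;=\; 0 .
\]
I would then test each group of terms against the sign data of MPVC-MFCQ and (ii): $\lambda_i \geqslant 0$ with $\nabla g_i(x^\ast)^T d < 0$ on $I_g$; $\eta_i^G \geqslant 0$ with $\nabla G_i(x^\ast)^T d < 0$ on $I_{+0} \cup I_{00}$ (all other $\eta_i^G$ vanish); $\eta_i^H \geqslant 0$ with $\nabla H_i(x^\ast)^T d > 0$ on $I_{0-}$, while on $I_{0+} \cup I_{00}$ one has $\nabla H_i(x^\ast)^T d = 0$, so those $\eta_i^H$-terms disappear and the $\eta_i^H$ on $I_+$ vanish outright. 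Hence every surviving summand is $\leqslant 0$, and since the total is $0$ each summand must equal $0$.

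This forces $\lambda_i = 0$ on $I_g$, $\eta_i^G = 0$ on $I_{+0} \cup I_{00}$, and $\eta_i^H = 0$ on $I_{0-}$; combined with the vanishing already recorded in (ii), we get $\lambda = 0$, $\eta^G = 0$, and $\eta_i^H = 0$ off $I_{0+} \cup I_{00}$. Substituting back into (i) (with $\alpha = 0$) collapses it to
\[
\sum_{i=1}^{p} \mu_i \nabla h_i(x^\ast) \;-\; \sum_{i \in I_{0+} \cup I_{00}} \eta_i^H \nabla H_i(x^\ast) \;=\; 0 .
\]
The linear independence clause of MPVC-MFCQ — that $\{\nabla h_i(x^\ast) : i=1,\dots,p\} \cup \{\nabla H_i(x^\ast) : i \in I_{0+} \cup I_{00}\}$ are linearly independent — now forces $\mu = 0$ and the remaining $\eta_i^H = 0$. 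Thus the whole tuple $(\alpha,\lambda,\mu,\eta^H,\eta^G)$ vanishes, contradicting Theorem 3.1. Therefore $\alpha > 0$, and rescaling by $\alpha$ yields the M-stationary conditions.

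The main obstacle I anticipate is the index-set bookkeeping, and in particular confirming that the $I_{00}$ indices behave correctly: the $\eta_i^G$-terms there are pinned down by $\nabla G_i(x^\ast)^T d < 0$, whereas the sign-free $\eta_i^H$-terms are harmless precisely because MPVC-MFCQ enforces $\nabla H_i(x^\ast)^T d = 0$ on $I_{00}$. A secondary point, consistent with the author's remark, is that the argument only delivers $\eta_i^G \geqslant 0$ on $I_{00}$ rather than $\eta_i^G = 0$, so it cannot be upgraded to \emph{S-stationarity}.
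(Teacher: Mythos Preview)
Your proof is correct and follows essentially the same strategy as the paper: apply the Fritz John type M-stationary conditions of Theorem~3.1 and rule out $\alpha=0$ using MPVC-MFCQ. The paper packages the elimination of $\alpha=0$ as an appeal to Motzkin's theorem of alternatives, whereas you carry out the equivalent argument by hand---pairing the stationarity relation with the MPVC-MFCQ direction $d$, reading off signs, and then invoking the linear-independence clause to kill the remaining $\mu$ and $\eta_i^H$ on $I_{0+}\cup I_{00}$. Your version has the minor advantage of making explicit where the linear-independence hypothesis enters; in the paper this is absorbed into the Motzkin citation.
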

		\begin{proof}
			Since MPVC-MFCQ holds at a local minimizer $x^\ast$, this implies that there exists a vector $d \in \mathbb{R}^n$ such that
			\begin{eqnarray*}
				\nabla h_i (x^\ast)^T d &=&0 ~~~~~\forall~i = 1,...,p,\\
				\nabla H_i (x^\ast)^T d &=&0 ~~~~~\forall~i \in I_{00}(x^\ast) \cup I_{0+} (x^\ast),\\
				- \nabla g_i (x^\ast)^T d &>&0 ~~~~~\forall~i \in I_g(x^\ast),\\
				- \nabla G_i (x^\ast)^T d &>&0 ~~~~~\forall~i \in I_{+0}(x^\ast) \cup I_{00} (x^\ast),\\
				\nabla H_i (x^\ast)^T d &>&0 ~~~~~\forall~i \in I_{0-}(x^\ast).
			\end{eqnarray*}
			Now, by Motzkin's theorem \\
			\begin{equation*}
			\sum_{i \in I_g (x^\ast)}^{} \lambda _i (-\nabla g_i(x^\ast)) + \sum_{i=1}^{p} \mu_i \nabla h_i (x^\ast) + \sum_{i \in I_{+0}(x^\ast) \cup I_{00}(x^\ast)}^{} \eta_i^G (-\nabla G_i(x^\ast)) + \sum_{i \in I_{00}(x^\ast) \cup I_{0+}(x^\ast) \cup I_{0-}(x^\ast)}^{} \eta_i^H \nabla H_i(x^\ast) ~=~0
			\end{equation*}
has no non zero solution, where		
\begin{eqnarray}
\label{eq:nnm}
 \left\{\begin{array}{ccc}
                \lambda_i &\geqslant 0& \forall ~i \in I_g(x^\ast) ,\lambda_i = 0 ~\forall ~i \notin I_g(x^\ast),  \\ \\
                 \eta_i^G  &\geqslant 0& \forall~ i \in I_{+0}(x^\ast) \cup I_{00}(x^\ast), \mu_i \in \mathbb R, \\ \\
                 \eta_i^H &\geqslant 0& \forall~ i \in I_{0-}(x^\ast), \eta_i^H \in \mathbb R~ \forall~ i \in I_{00}(x^\ast) \cup I_{0+}(x^\ast)
                \end{array} \right\}
\end{eqnarray}

			that is, we can say that
			\begin{equation*}
			\sum_{i \in I_g(x^\ast)}^{} \lambda _i \nabla g_i(x^\ast) - \sum_{i=1}^{p} \mu_i \nabla h_i (x^\ast) + \sum_{i \in I_{+0}(x^\ast) \cup I_{00}(x^\ast)}^{} \eta_i^G \nabla G_i(x^\ast) - \sum_{i \in I_{00}(x^\ast) \cup I_{0+}(x^\ast) \cup I_{0-}(x^\ast)}^{} \eta_i^H \nabla H_i(x^\ast) ~=~0
			\end{equation*}
			 has no non zero solution satisfying the multiplier conditions (\ref{eq:nnm}).\\\\ Since $\mu$ is free, therefore
			 \begin{equation}
			 \sum_{i \in I_g(x^\ast)}^{} \lambda _i \nabla g_i(x^\ast) + \sum_{i=1}^{p} \mu_i \nabla h_i (x^\ast) + \sum_{i \in I_{+0}(x^\ast) \cup I_{00}(x^\ast)}^{} \eta_i^G \nabla G_i(x^\ast) - \sum_{i \in I_{00}(x^\ast) \cup I_{0+}(x^\ast) \cup I_{0-}(x^\ast)}^{} \eta_i^H \nabla H_i(x^\ast) ~=~0
			  \label{eq1}
			 \end{equation}
	 		 also has no nonzero solution with the restriction on the multipliers given in (\ref{eq:nnm}).\\
			 \hspace*{0.5 cm} Now, since $x^\ast$ is local minimizer, therefore by the Fritz John type M-stationary condition, there exist a nonzero multiplier $(\alpha, \lambda, \mu, \eta^H, \eta^G)$ such that\\\\
 $  \alpha \nabla f(x^\ast) + \sum_{i=1}^{m} \lambda_i \nabla g_i(x^\ast) + \sum_{i=1}^{p} \mu_i \nabla h_i (x^\ast) + \sum_{i=1}^{q} \eta^G_i \nabla G_i(x^\ast)
			 - \sum_{i=1}^{q} \eta^H_i \nabla H_i(x^\ast) = 0 $\\\\
where
\begin{eqnarray*}			
\lambda_i & \geqslant &  0 ~\forall  i \in I_g(x^\ast),~ \lambda_i = 0 ~\forall  i  \notin I_g (x^\ast)~~{\rm and}~~ \eta_i^H = 0 ~\forall i \in I_{+}(x^\ast),\\
\eta_i^G &=& 0~~\forall~i \in I_{+-}(x^\ast) \cup I_{0-}(x^\ast) \cup I_{0+}(x^\ast),\\
{\rm and}~~ \eta_i^G & \geqslant & 0 ~~\forall i \in I_{+0}(x^\ast),~~\eta_i^H \geqslant 0~~\forall ~i \in I_{0-}(x^\ast) ~{\rm and} ~ \eta_i^G \geqslant 0, \eta_i^G \eta_i^H = 0~ \forall i \in I_{00}(x^\ast).
\end{eqnarray*}
		\hspace*{0.5 cm}	 Here if $\alpha = 0$, then it contradicts that equation (\ref{eq1}), obtained by MPVC-MFCQ, has no nonzero solution with restriction on multipliers given in (\ref{eq:nnm}). Hence,  $\alpha \neq 0$ and then by proper scaling in Fritz John type optimality conditions we  have M-stationary conditions at $x^\ast$.
\end{proof}
\begin{Remark}
 The above M-stationarity is also derived in \cite [Theorem 3.4]{hoheisel 2} under MPVC-GCQ (MPVC-\emph{Guignard constraint qualification}), which is much weaker than MPVC-MFCQ. In the above proof, if we use the Fritz John type S-stationarity, then for $\alpha=0$, we may obtain nonzero solutions of (\ref{eq1}) with restriction on multipliers given in (\ref{eq:nnm}). For illustration, take $\eta_i^H > 0~ \forall i \in I_{00}$ and other multipliers zero, then they satisfy the KKT type S-stationary conditions, and also (\ref{eq1}) has a nonzero solution with the restriction given in (\ref{eq:nnm}). This contradicts MPVC-MFCQ; hence under MPVC-MFCQ the S-stationarity need not hold.
\end{Remark}

Now our next result shows that  the M-stationarity holds under a constraint qualification MPVC-GMFCQ, which is weaker than MPVC-MFCQ. Hence by using the next theorem and the fact that MPVC-MFCQ implies MPVC-GMFCQ, Theorem 3.3 is obvious. But we have included its proof to conclude the fact that MPVC-MFCQ is not sufficient for any local minimizer to be S-stationary.			
			\begin{Theorem}
				If $x^\ast$ is a local minimizer of MPVC and MPVC-GMFCQ holds at $x^\ast$, then $x^\ast$ is  an M-stationary point.
			\end{Theorem}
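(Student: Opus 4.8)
The plan is to invoke the Fritz John type M-stationary condition (Theorem 3.1) and then use MPVC-GMFCQ to rule out the degenerate case in which the objective multiplier vanishes, mirroring the argument used for Theorem 3.3 but now under the weaker qualification. Since Theorem 3.1 requires no constraint qualification, it is the natural entry point.

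First I would apply Theorem 3.1 at the local minimizer $x^\ast$. This produces a scalar $\alpha \geqslant 0$ together with multipliers $\lambda, \mu, \eta^H, \eta^G$, not all zero, satisfying the stationarity equation (i) with the objective gradient weighted by $\alpha$, as well as the full package of sign restrictions (ii), including the complementarity $\eta_i^H \eta_i^G = 0$ for all $i \in I_{00}(x^\ast)$. The goal is then to show $\alpha \neq 0$, after which proper scaling delivers M-stationarity.

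Next I would argue $\alpha \neq 0$ by contradiction. Suppose $\alpha = 0$. Then condition (i) collapses to $\sum_{i} \lambda_i \nabla g_i(x^\ast) + \sum_i \mu_i \nabla h_i(x^\ast) + \sum_i \eta_i^G \nabla G_i(x^\ast) - \sum_i \eta_i^H \nabla H_i(x^\ast) = 0$, and because the tuple $(\alpha,\lambda,\mu,\eta^H,\eta^G)$ is nonzero with $\alpha = 0$, the reduced tuple $(\lambda,\mu,\eta^H,\eta^G)$ is itself nonzero. The crucial observation is that the sign restrictions listed in condition (ii) of Theorem 3.1 coincide \emph{verbatim} with conditions (i) and (ii) of Definition 2.3. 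Hence $(\lambda,\mu,\eta^H,\eta^G)$ is exactly a nonzero multiplier of the type whose existence MPVC-GMFCQ forbids, which is the desired contradiction. Therefore $\alpha > 0$, and dividing through by $\alpha$ (relabeling $\lambda/\alpha,\mu/\alpha,\eta^H/\alpha,\eta^G/\alpha$) preserves every sign condition and the complementarity on $I_{00}(x^\ast)$, since all of these are positively homogeneous, yielding precisely the M-stationarity conditions at $x^\ast$.

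The argument is essentially mechanical once Theorem 3.1 is available, so I do not anticipate a serious obstacle beyond the bookkeeping of matching each index-set restriction in the Fritz John conclusion against the corresponding clause of MPVC-GMFCQ. The one point deserving genuine care is the complementarity term $\eta_i^H \eta_i^G = 0$ for $i \in I_{00}(x^\ast)$: this is exactly what upgrades the Fritz John statement from W- to M-stationarity in Theorem 3.1, and it is precisely the same term that appears in Definition 2.3. It is this alignment that lets the $\alpha = 0$ contradiction operate at the M-stationary level rather than only at the weaker W-stationary level, and verifying that the complementarity is genuinely inherited (not merely the one-sided sign conditions) is the step I would check most carefully.
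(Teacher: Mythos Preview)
Your proposal is correct and follows essentially the same approach as the paper: invoke the Fritz John type M-stationary condition (Theorem 3.1), observe that $\alpha = 0$ would produce a nonzero multiplier violating MPVC-GMFCQ, and scale by $\alpha > 0$. Your write-up is in fact more detailed than the paper's terse proof, which dispatches the argument in three sentences.
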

			\begin{proof}
			If $x^\ast$ is local minimizer, then we have by Fritz John type M-stationary condition, there exist  $\alpha \geqslant 0~ {\rm and} ~\lambda, ~\mu, ~\eta^H, ~\eta^G$ such that\\\\
			$ ~~\alpha \nabla f(x^\ast) ~+~ \sum_{i=1}^{m} \lambda_i \nabla g_i(x^\ast) ~+~ \sum_{i=1}^{p} \mu_i \nabla h_i (x^\ast) ~+~ \sum_{i=1}^{q} \eta^G_i \nabla G_i(x^\ast)
			~-~ \sum_{i=1}^{q} \eta^H_i \nabla H_i(x^\ast)~=~0 $\\\\
			${\rm where}~~~~~~~~\lambda_i \geqslant 0 ~~~\forall~i \in I_g(x^\ast),~~~~~~\lambda_i = 0 ~~~\forall~i \notin I_g(x^\ast)$
\begin{eqnarray*}
 		{and}~~~\eta_i^G &=& 0 ~~~~\forall~i \in I_{+-}(x^\ast) \cup I_{0-}(x^\ast) \cup I_{0+}(x^\ast),~~\eta_i^G  \geqslant 0~\forall i \in I_{+0}(x^\ast) \cup I_{00}(x^\ast),\\
\eta_i^H &=& 0 ~~~~\forall~i \in I_+(x^\ast),~~\eta_i^H  \geqslant  0~\forall ~i \in I_{0-}(x^\ast)~~ {\rm and}~~\eta_i^H ~ {\rm is}~ { \rm free}~ \forall~ i \in I_{0+}(x^\ast),\\
 		\eta_i^G . \eta_i^H &=&0 ~~~~\forall~i \in I_{00}(x^\ast).
 	\end{eqnarray*}
			If $\alpha = 0 $, then it violates the MPVC-GMFCQ. Hence $\alpha \neq 0$ and then by proper scaling we get M-stationarity at $x^\ast$.
			
			\end{proof}
			
			By using EMPVC form of MPVC, we can establish  M-stationary conditions for MPVC. The corresponding result has also been established for OPVIC in \cite[Corollary 4.8]{ye 2} and for MPEC in \cite{ye 3}. Here, we are using similar approach of the error bound as in \cite{ye 2,ye 3} to prove our result. But, firstly  analogous to the optimization problem with variational inequality constraints (OPVIC) \cite[Definition 4.1]{ye 2}, we  define local error bound for the constraint system of EMPVC  as follows.
			\begin{Definition}
				\textbf{Local error bound property for EMPVC:}\\
				The system of constraints
				\begin{eqnarray*}
					g(x)&\leqslant &0 ,\\
					h(x)&=&0,\\
					H(x) - y &=&0,\\
					G(x) - z &=&0,\\
					(y,z)&\in&\Omega
				\end{eqnarray*}
				where
				\begin{equation*}
				\Omega ~=~ \{(y,z) \in \mathbb{R}^{2q} ~|~y_i \geqslant 0,~ y_i z_i \leqslant 0, ~\forall ~i = 1,...,q \},
				\end{equation*}
				is said to have a local error bound at $(x^\ast, y^\ast, z^\ast)$, if there exist $\alpha, \delta, \epsilon > 0$ such that
				\begin{equation*}
				d((x,y,z), \Psi (0,0,0,0)) \leqslant \alpha ||(p,q,r,s)||
				\end{equation*}
				for all $(p,q,r,s) \in \mathbb{B}_\epsilon (0,0,0,0)$ and all $(x,y,z) \in \Psi (p,q,r,s) \cap \mathbb{B}_\delta (x^\ast,y^\ast,z^\ast)$, where
				\begin{equation*}
				\Psi (p,q,r,s) = \{(x,y,z) \in  \mathbb{R}^n \times \Omega | g(x) + p \leqslant 0; h(x) + q = 0 ; H(x)-y+s = 0 ; G(x)-z+r = 0\}
				\end{equation*}
			\end{Definition}
			Usually, in case when all the constraints are affine, KKT necessary optimality conditions hold without any additional constraint qualification, but we can not assure it for MPVC as it is a special and more difficult class of optimization problem. Therefore, to prove our next result, we define one more constraint qualification besides the above local error bound property for EMPVC, which is similar to \emph{linear constraint qualification} of MPEC case \cite[Definition 2.12]{ye 3}
			
			\begin{Definition}
			\textbf{MPVC-linear constraint qualification:} For the MPVC  problem, MPVC-linear constraint qualification is said to be satisfied if all the functions $g,h,G,H$ are affine.
			\end{Definition}
			Using these two constraint qualifications, we have the following result.
			\begin{Theorem}
				 \textbf{M-stationary conditions:}
				Let $z^\ast$ be a local optimal solution for MPVC, where all functions are continuously differentiable at $z^\ast$. If either MPVC-GMFCQ or MPVC-linear constraint qualification is satisfied at $z^\ast$, then $z^\ast$ is an M-stationary point.
			\end{Theorem}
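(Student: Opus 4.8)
The plan is to funnel the whole argument through the \emph{local error bound property for EMPVC} just defined, and to use it to upgrade the Fritz John multiplier of the first Fritz John type M-stationary theorem (where $\nabla f$ may carry a zero coefficient $\alpha$) into a genuine M-stationarity multiplier with $\alpha=1$. The skeleton has three stages: (1) show that \emph{each} of the two hypotheses forces the EMPVC constraint system to admit a local error bound at $(x^\ast,y^\ast,z^\ast)$, where of course $y^\ast=H(x^\ast)$ and $z^\ast=G(x^\ast)$; (2) turn the error bound into a local exact penalty representation; and (3) apply Mordukhovich's generalized Lagrange multiplier rule to the penalized problem, reading the M-stationarity sign conditions directly off the limiting normal cone $N_\Omega(x^\ast,y^\ast,z^\ast)$ that was already computed in the proof of the first Fritz John theorem.

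For stage (1) I would handle the two cases separately but aim at the same conclusion (calmness of the perturbation map $\Psi$ at the origin, which is exactly their error bound). Under MPVC-linear CQ the functions $g,h,G,H$ are affine, so after splitting $\Omega=\{(y,z):y_i\ge 0,\ y_iz_i\le 0\}$ index-by-index into the finitely many polyhedral convex pieces $\{y_i=0\}$ and $\{y_i\ge 0,\ z_i\le 0\}$, the graph of $\Psi$ is a finite union of polyhedral sets; Robinson's theorem on polyhedral multifunctions then yields local upper Lipschitz continuity of $\Psi$, hence the error bound. Under MPVC-GMFCQ I would invoke the Remark identifying MPVC-GMFCQ with MPVC-NNAMCQ: the no-nonzero-abnormal-multiplier condition is precisely the coderivative criterion for metric regularity of $(x,y,z)\mapsto(g(x),h(x),H(x)-y,G(x)-z)$ relative to $\mathbb R^m_-\times\{0\}^{p+2q}$ with $(y,z)\in\Omega$, and metric regularity implies the local error bound. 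The passage from the EMPVC normal-cone multipliers to the index-set sign conditions is the very translation already performed in the first Fritz John theorem.

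For stages (2)--(3), calmness of $\Psi$ at $(0,z^\ast)$ gives, by the standard exact penalty principle, a constant $K>0$ for which $(x^\ast,y^\ast,z^\ast)$ is a local minimizer of
$$ f(x)+K\,\big\|\big(g^+(x),\,h(x),\,H(x)-y,\,G(x)-z\big)\big\| $$
over $(x,y,z)\in\mathbb R^n\times\Omega$. This is a problem carrying only the abstract constraint $\Omega$, so $0\in\partial(\cdot)+N_\Omega$ together with the limiting-subdifferential calculus for the penalty term produces $\lambda_i\ge 0$ on the active $g_i$, free $\mu$, and a pair $(\eta^H,\eta^G)$ constrained by $N_\Omega(x^\ast,y^\ast,z^\ast)$; after the sign change $\eta_i^G\mapsto-\eta_i^G$ these are exactly the W-stationarity conditions with the extra complementarity $\eta_i^H\eta_i^G=0$ on $I_{00}$, i.e.\ M-stationarity. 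The decisive point is that $\nabla f$ now enters with coefficient $1$, so no scaling and no separate nondegeneracy argument (unlike Theorem 3.3) is required.

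I expect the real work to be concentrated in stage (1). In the affine case one must check carefully that the finite union of polyhedra genuinely renders $\operatorname{gph}\Psi$ polyhedral in Robinson's sense, so that upper Lipschitz continuity applies at the base point. In the GMFCQ case one must confirm that the abstract metric-regularity/coderivative condition for the EMPVC system coincides \emph{verbatim} with the index-wise abnormal-multiplier conditions of Definition 3, i.e.\ that ``no nonzero abnormal multiplier for EMPVC'' is literally MPVC-GMFCQ; this hinges on reusing the explicit $N_\Omega$ computation. Once the error bound is secured under either hypothesis, stages (2)--(3) are the now-routine exact-penalty-plus-Mordukhovich-calculus argument.
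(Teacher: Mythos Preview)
Your approach is valid but differs from the paper's in both branches. Under MPVC-GMFCQ the paper simply invokes the immediately preceding Theorem~3.4: the Fritz John type M-stationary multiplier of Theorem~3.1 exists, and $\alpha=0$ would contradict the definition of GMFCQ verbatim, so $\alpha>0$ and a rescaling finishes---no error bound or metric regularity enters. Your detour through NNAMCQ $\Rightarrow$ metric regularity $\Rightarrow$ calmness is correct (GMFCQ is indeed the Mordukhovich coderivative criterion) but is considerably heavier machinery than the one-line argument the paper uses. Under the linear CQ both arguments begin identically with Robinson's polyhedral-multifunction theorem to obtain local upper Lipschitz continuity of $\Psi$ and hence the EMPVC error bound; from there the paths split. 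The paper uses Clarke's exact penalization to pass to an \emph{auxiliary MPVC} in the augmented variables $(x,p,q,r,s)$ with objective $f(x)+\mu_f\alpha\|(p,q,r,s)\|$, checks that this auxiliary problem satisfies MPVC-GMFCQ (the $p,q,r,s$-gradient components force any abnormal multiplier to vanish), and then reduces to the GMFCQ case already handled. You instead penalize down to a problem over $\mathbb{R}^n\times\Omega$ and extract M-stationarity directly from $N_\Omega$ via the limiting-subdifferential calculus. Your route is arguably more streamlined in the linear case, since it avoids the auxiliary-problem construction and the somewhat delicate passage back to the original MPVC; the paper's route buys an explicit logical reduction of the linear CQ to GMFCQ, so that ultimately only the Fritz John argument of Theorem~3.1 is ever invoked.
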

			\begin{proof}
				Under MPVC-GMFCQ result holds obviously. For the later case, MPVC can be written equivalently as (EMPVC):
				\begin{eqnarray}
					\nonumber \min f(x) &&\\
					\nonumber {\rm s.t.}~~g(x) &\leqslant&0,\\
					\nonumber h(x) &=&0,\\
					\nonumber H(x) - y &=&0,\\
					\nonumber G(x) - z &=&0,\\
					(y,z) & \in & \Omega
				\end{eqnarray}
				 where
				\begin{equation*}
				\Omega ~=~ \{(y,z) \in \mathbb{R}^{2q} ~|~y_i \geqslant 0,~ y_i z_i \leqslant 0, ~\forall ~i = 1,...,q \}.
				\end{equation*}
Now, we consider the set of solutions to the perturbed constraints system for EMPVC.
				\begin{equation*}
				\Psi (p,q,r,s) = \{(x,y,z) \in \mathbb{R}^n \times \Omega | g(x) + p \leqslant 0 ; h(x) + q = 0 ; H(x)-y+s = 0 ; G(x)-z+r = 0\}
				\end{equation*}
				Since all $g(x), h(x), H(x), G(x)$ are affine, therefore graph of the set valued map $\Psi (p,q,r,s)$ is a union of polyhedral convex sets and hence $\Psi (p,q,r,s)$ is a polyhedral multifunction. By \cite[Proposition 1]{robinson} $\Psi$ is locally upper Lipschitz at each point of $\mathbb{R}^{2q+m+p}$, in particular  at $(0,0,0,0) \in \mathbb{R}^{2q+m+p}$, \emph{i.e.}  there is a neighbourhood $U$ of $(0,0,0,0)$ and $\alpha \geqslant 0$ such that \\
				\begin{equation*}
				\Psi (p,q,r,s) \subseteq \Psi (0,0,0,0) + \alpha ||(p,q,r,s)|| cl \mathbb{B} ~~~\forall~(p,q,r,s) \in U
				\end{equation*}
				 where $cl \mathbb{B}$ denotes the closed unit ball. Equivalently, the constraint system of EMPVC has a local error bound, i.e.\\
				 \begin{equation*}
				 d((x,y,z), \Psi (0,0,0,0)) ~\leqslant~\alpha ||(p,q,r,s)||
				 \end{equation*}
				 for all $(p,q,r,s) \in U$ and $(x,y,z) \in \Psi (p,q,r,s)$. Now by Clarke's principle of exact penalization \cite[Proposition 2.4.3]{clarke} $(x^\ast, y^\ast, z^\ast)$ is also a local optimal solution to the unconstrained problem \\
				 \begin{equation*}
				 \min f(x) + \mu_f d((x,y,z), \Psi (0,0,0,0))
				 \end{equation*}
				 Hence by the local error bound property and the fact that $x^\ast$ is  local optimal solution to MPVC, we have that $(x^\ast, 0,0,0)$ is a local optimal solution to the following
				 \begin{eqnarray*}
				 \min f(x) + \mu_f \alpha||(p,q,r,s)||\\
					{\rm s.t.} ~~g(x) + p  \leqslant 0,&& \\
				 	h(x) + q = 0,&& \\
				 	H(x) + s \geqslant 0,&& \\
				 	(G_i(x)+r)(H_i(x)+s) \leqslant 0 &&\forall~i=1,...,q,
				 \end{eqnarray*}
				  then at $(x^\ast, 0,0,0,0)$, MPVC-GMFCQ (or MPVC-NNAMCQ) is satisfied for the above problem. Suppose not, then there is a nonzero multiplier $(\lambda, \mu, \eta^G, \eta^H)$ such that
				  \begin{equation*}
				  0 = \sum_{i=1}^{m} \lambda_i \nabla (g_i(x^\ast)+p) + \sum_{i=1}^{p} \mu_i \nabla (h_i(x^\ast)+q) - \sum_{i=1}^{q} \eta_i^H \nabla(H_i(x^\ast) + s) + \sum_{i=1}^{q} \eta_i^G \nabla(G_i(x^\ast) + r)
				  \end{equation*}
				  That is,
				  \begin{eqnarray*}
				  	0 = \sum_{i=1}^{m} \lambda_i \nabla g_i(x^\ast) &+& \sum_{i=1}^{p} \mu_i \nabla h_i(x^\ast) - \sum_{i=1}^{q} \eta_i^H \nabla H_i(x^\ast) + \sum_{i=1}^{q} \eta_i^G \nabla  G_i(x^\ast) \\
				  	&+& \sum_{i=1}^{m} \lambda_i \nabla p + \sum_{i=1}^{p} \mu_i \nabla q - \sum_{i=1}^{q} \eta_i^H \nabla s + \sum_{i=1}^{q} \eta_i^G \nabla r.
				  \end{eqnarray*}
				Which implies that   $  \lambda_i ~=~\mu_i ~=~ \eta_i^H ~=~\eta_i^G ~=~0 $ for all $i$. Hence, MPVC-GMFCQ is satisfied at $(x^\ast, 0,0,0,0)$ for the above problem. Now in $(x^\ast, 0,0,0,0)$, $x$ component is same as optimal solution of original MPVC (\ref{eq1.2}). Therefore, MPVC-GMFCQ is also satisfied for original MPVC at $x^\ast$. Hence, $\alpha > 0$, and then by scaling, M-stationary conditions can be seen.
\end{proof}
		
	\section{ Enhanced  Stationarity and Weak Constraint Qualifications}
	\label{efjsc}
	Here, we present  strong versions of Fritz John type stationary conditions for the MPVC. In this section, our first  result is a refinement of the Fritz John type M-stationary conditions for the MPVC,  established in section \ref{fjsc}, in the sense that it provides information about a neighbourhood of extremal points in terms of converging sequence towards it. So, this result may be referred to as enhanced Fritz John type result for MPVC. These enhanced Fritz John conditions have already been established for several smooth and nonsmooth optimization problems in \cite{bert,bert2,bertsekas2,hestenes,Kanzow1,ye 1}.\\
	\hspace*{0.5 cm} The present result is motivated by \cite[Theorem 3.1]{Kanzow1} established for MPEC. In  \cite{Kanzow1}, it was stressed that their proof was completely elementary since they used Fr\'echet normal cone instead of limiting normal cone \cite{ye 3}. We follow \cite{Kanzow1},  using limiting and Fr\'echet normal cone respectively,  we obtain M-stationary and S-stationary type necessary optimality conditions for MPVC. We refer to Hoheisel and Kanzow  \cite{hoheisel 2} for detailed study of these generalized cones for MPVC.
	\begin{Theorem}
		\label{Theorem 4.1}
	\textbf{Enhanced Fritz John type M-stationary conditions:}
	\label{thmefj-M}
	Let $x^\ast$ be a local minimum of MPVC (P), then there exist multipliers $\alpha, ~\lambda, ~\mu, ~\eta^H, ~\eta^G$ such that\\
	$ {\rm (i)} ~~\alpha \nabla f(x^\ast) + \sum_{i=1}^{m} \lambda_i \nabla g_i(x^\ast) + \sum_{i=1}^{p} \mu_i \nabla h_i (x^\ast) + \sum_{i=1}^{q} \eta^G_i \nabla G_i(x^\ast)
	-\sum_{i=1}^{q} \eta^H_i \nabla H_i(x^\ast)~=~0 $;\\
	$ {\rm (ii)}~~ \alpha \geqslant 0,~~
		\lambda_i \geqslant ~~\forall~i \in I_g(x^\ast),~~~
		\lambda_i =0~~\forall~i \notin I_g(x^\ast)$
	\begin{eqnarray*}
	{and}~~~\eta_i^G &=& 0 ~~~~\forall~i \in I_{+-}(x^\ast) \cup I_{0-}(x^\ast) \cup I_{0+}(x^\ast),~~\eta_i^G  \geqslant 0~\forall i \in I_{+0}(x^\ast) \cup I_{00}(x^\ast)\\
\eta_i^H &=& 0 ~~~~\forall~i \in I_+(x^\ast),~~\eta_i^H  \geqslant  0~\forall ~i \in I_{0-}(x^\ast)~~ {\rm and}~~\eta_i^H ~ {\rm is}~ { \rm free}~ \forall~ i \in I_{0+}(x^\ast)\\
		\eta_i^G . \eta_i^H &=&0 ~~~~\forall~i \in I_{00}(x^\ast);
	\end{eqnarray*}
	$ {\rm (iii)} ~~\alpha ,~\lambda ,~\mu ,~\eta^G ,~\eta^H$ are not all equal to zero;\\
	$ {\rm (iv)}$ If $\lambda ,~\mu ,~\eta^G ,~\eta^H $ are not all equal to zero, then there is a sequence $\{x^k\} \rightarrow x^\ast $ such that $\forall k \in \mathbb{N}$, we have
	\begin{eqnarray*}
		f(x^k)&<&f(x^\ast),\\
		 \lambda_i &>&0  ~~\Rightarrow ~~\lambda_i g_i(x^k) >  0 ~~~ \{i=1,...,m\},\\
	 \mu_i &\neq&0  ~~\Rightarrow ~~\mu_i h_i(x^k) >  0 ~~~ \{i=1,...,p\},\\
	 \eta^H_i &\neq&0  ~~\Rightarrow ~~\eta^H_i H_i(x^k) <  0 ~~~ \{i=1,...,q\},\\
	 \eta^G_i &>&0  ~~\Rightarrow ~~\eta^G_i G_i(x^k) >  0 ~~~ \{i=1,...,q\}.	
	\end{eqnarray*}	
	\end{Theorem}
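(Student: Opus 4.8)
The plan is to adapt the Bertsekas quadratic-penalty argument used in \cite[Theorem 3.1]{Kanzow1} for MPEC to the equivalent reformulation EMPVC, keeping $\Omega=\{(x,y,z)\in\mathbb{R}^{n+2q}\mid y_i\geqslant0,\ z_iy_i\leqslant0\}$ as an abstract closed set constraint so that its \emph{limiting} normal cone produces the M-stationarity signs. Since $x^\ast$ is a local minimum, I would fix $\epsilon>0$ with $f(x^\ast)\leqslant f(x)$ for every feasible $x$ in the ball of radius $\epsilon$, set $y^\ast=H(x^\ast)$, $z^\ast=G(x^\ast)$, and for each $k\in\mathbb{N}$ minimize over the compact set $\Omega\cap\overline{\mathbb{B}}_\epsilon(x^\ast,y^\ast,z^\ast)$ the penalized objective
\begin{equation*}
F_k(x,y,z)=f(x)+\tfrac{k}{2}\Big[\textstyle\sum_i (g_i^+(x))^2+\sum_j h_j(x)^2+\sum_i (H_i(x)-y_i)^2+\sum_i (G_i(x)-z_i)^2\Big]+\tfrac12\big(\|x-x^\ast\|^2+\|y-y^\ast\|^2+\|z-z^\ast\|^2\big).
\end{equation*}
A minimizer $(x^k,y^k,z^k)$ exists by compactness. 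From $F_k(x^k,y^k,z^k)\leqslant F_k(x^\ast,y^\ast,z^\ast)=f(x^\ast)$ and boundedness of $f$ on the ball, the bracketed penalty tends to $0$, so every limit point is feasible; the proximal term together with the local optimality of $x^\ast$ forces $(x^k,y^k,z^k)\to(x^\ast,y^\ast,z^\ast)$, and hence the iterates eventually lie in the interior of the ball.

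Next I would write the first-order condition at the eventually interior minimizer. Since $\Omega$ leaves $x$ free, the $x$-block gives $\nabla_x F_k=0$, which, with the penalty multipliers $\lambda_i^k:=kg_i^+(x^k)\geqslant0$, $\mu_j^k:=kh_j(x^k)$, $\eta_i^{H,k}:=k(y_i^k-H_i(x^k))$ and $\eta_i^{G,k}:=k(G_i(x^k)-z_i^k)$, is exactly condition (i) with $\alpha=1$ up to the residual $(x^k-x^\ast)\to0$. The $(y,z)$-block gives a normal-cone inclusion $-\nabla_{(y,z)}F_k(x^k,y^k,z^k)\in N^F_\Omega(x^k,y^k,z^k)$. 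I would then normalize the vector $(1,\lambda^k,\mu^k,\eta^{H,k},\eta^{G,k})$ to unit length and pass to a convergent subsequence; the limit $(\alpha,\lambda,\mu,\eta^H,\eta^G)$ has unit norm, giving (iii), and satisfies (i). Letting $k\to\infty$ in the inclusion and using closedness of the limiting normal cone places the limit of the $(y,z)$-block into $N_\Omega(x^\ast,y^\ast,z^\ast)$; substituting the relations between $(\eta^H,\eta^G)$ and the components $(\xi,\zeta)$ exactly as in the sign bookkeeping of the proof of Theorem 3.1 into the explicit normal-cone formula (from \cite[Lemma 4.1]{hoheisel}, \cite[Lemma 3.2]{hoheisel 2}), read off over the partition $I_{+-},I_{+0},I_{00},I_{0-},I_{0+}$, yields all the sign restrictions in (ii), including the complementarity $\eta_i^G\eta_i^H=0$ on $I_{00}$ coming from the corner condition $\xi_i\zeta_i=0$.

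For the enhanced part (iv) I would track the penalty multipliers along the sequence. The signs give the first two product conditions immediately: $\lambda_i>0$ forces $g_i^+(x^k)>0$, i.e. $g_i(x^k)>0$, so $\lambda_i g_i(x^k)>0$; and $\mathrm{sgn}\,\mu_j^k=\mathrm{sgn}\,h_j(x^k)$ gives $\mu_j h_j(x^k)>0$. For $\eta^H,\eta^G$ I would analyze the separable subproblem in $(y_i,z_i)$: minimizing $\tfrac{k}{2}(H_i(x^k)-y_i)^2+\tfrac{k}{2}(G_i(x^k)-z_i)^2+\tfrac12(y_i-y_i^\ast)^2+\tfrac12(z_i-z_i^\ast)^2$ over $\{y_i\geqslant0,\ y_iz_i\leqslant0\}$ is, up to a vanishing proximal perturbation, the projection of $(H_i(x^k),G_i(x^k))$ onto $\Omega$. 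A short case check of this projection on each index set pins down the signs of $H_i(x^k)$ and $G_i(x^k)$ whenever the corresponding multiplier is nonzero, delivering $\eta_i^H H_i(x^k)<0$ and $\eta_i^G G_i(x^k)>0$; near the origin (the $I_{00}$ case) the projection lands on one of the two axes, which simultaneously reproduces $\eta_i^G\eta_i^H=0$ and the correct strict sign. Finally, $F_k(x^k,y^k,z^k)\leqslant f(x^\ast)$ with both the penalty and the proximal terms nonnegative gives $f(x^k)\leqslant f(x^\ast)$, and since a nonzero $(\lambda,\mu,\eta^H,\eta^G)$ forces some constraint to be violated at $x^k$, so that $(x^k,y^k,z^k)\neq(x^\ast,y^\ast,z^\ast)$ and the proximal term is strictly positive, the inequality is strict, i.e. $f(x^k)<f(x^\ast)$.

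I expect the main obstacle to be precisely this per-index projection analysis in (iv): one must extract the correct strict signs of $H_i(x^k)$ and $G_i(x^k)$ together with the complementarity on $I_{00}$, all consistently with the limiting normal cone of $\Omega$ recovered in the limit. The equality and inequality multipliers are routine, but matching the MPVC-specific multipliers $\eta^H,\eta^G$ to the geometry of the nonconvex corner set $\Omega$, and checking that the normalization does not annihilate the relevant component, is the delicate step. The companion S-stationary statement, Theorem 4.2, would instead retain the Fr\'echet normal cone $N_\Omega^F$ in the limit, which is what yields the stronger sign pattern on $I_{00}$ there.
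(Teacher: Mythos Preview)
Your proposal is correct and follows essentially the same approach as the paper: reformulate as EMPVC, apply the Bertsekas quadratic-penalty-plus-proximal argument over $\Omega\cap\overline{\mathbb{B}}_\epsilon$, extract the M-stationarity signs from the (limiting) normal cone of $\Omega$, and obtain the enhanced conditions~(iv) from the explicit form of the penalty multipliers along the minimizing sequence. The only cosmetic difference is that the paper applies the limiting normal cone already at each iterate $(x^k,y^k,z^k)$ and does a per-$k$ case analysis (then passes to the limit), while you invoke outer semicontinuity of $N_\Omega$ for~(ii) and frame the per-index argument in~(iv) as a projection onto the L-shaped set; the substance, including the key observation that $y_i^k>0$ (resp.\ $z_i^k\neq0$) for infinitely many $k$ forces $\eta_i^H=0$ (resp.\ $\eta_i^G=0$), is identical.
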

	\begin{proof}
	Firstly, we formulate MPVC  equivalently as  (EMPVC):
	\begin{eqnarray}
	\label{empvc2}
\nonumber {\rm min} ~~ f(x)\\
\nonumber {\rm s.t.}~~ g(x) &\leqslant&  0,\\
\nonumber h(x) &=&  0,\\
\nonumber	y - H(x) &=&  0,\\
\nonumber	z - G(x) &=&  0,\\
		(x,y,z)&\in&\Omega
\end{eqnarray}
where $\Omega~=~\{(x,y,z) \in \mathbb{R}^{n+q+q}~|~y_i \geqslant 0,~  y_i z_i\leqslant 0 ~~\forall i=1,...,q \}
$
	  is nonempty closed set, and we have a local minimum at $(x^\ast, ~y^\ast, ~z^\ast)$, then \\
	  $~~~~~~~~~~~~~~~~~~~~~~~~~~~~~~~y^\ast ~=~H(x^\ast)~~~~~~{\rm and}~~~~~~z^\ast ~=~G(x^\ast)$\\
	  Now using the idea of \cite[Proposition 2.1]{bertsekas2} , we choose $\epsilon > 0$ such that
	  \begin{equation*}
	  	f(x)~\geqslant~f(x^\ast)~~~\forall ~(x,~y,~z) \in \mathcal{S}
	  \end{equation*}
	   that are feasible for EMPVC, where
	  \begin{equation*}
	  \mathcal{S}~=~\{ (x,y,z) ~|~ \|(x,y,z)-(x^\ast,y^\ast,z^\ast)\| \leqslant \epsilon \}.
	  \end{equation*}
	  \hspace*{0.5 cm} Now consider the penalized problem approach given by McShane \cite{mcshane} and later elegantly used by Bertsekas \cite{bertsekas2}
	  \begin{eqnarray*}
{\rm min} ~F_k(x,y,z) \\
 {\rm s.t.}~ (x,y,z)\in \mathcal{S} \cap \Omega
\end{eqnarray*}
 with
 \begin{eqnarray*}
	  F_k(x,y,z)&:=&f(x)+\frac{k}{2} \sum_{i=1}^{m} (g_i^{+}(x^k))^2 +\frac{k}{2} \sum_{i=1}^{p} h_i(x)^2 + \frac{k}{2} \sum_{i=1}^{q} (y_i - H_i(x))^2 \\ && ~~~~~~~~ + \frac{k}{2} \sum_{i=1}^{q} (z_i - G_i(x))^2
	   + \frac{1}{2} ||(x,y,z) - (x^\ast, y^\ast, z^\ast)||^2 ~~\forall ~ k \in \mathbb{N}.
	  \end{eqnarray*}
	 Since $\mathcal{S} \cap \Omega $ is compact and $F_k$ is continuous, therefore this penalized problem has at least one solution say $(x^k, y^k, z^k) ~, ~\forall ~k \in \mathbb{N}$\\
	 \hspace*{0.5cm} Now we will show that this sequence $\{(x^k, y^k, z^k)\}$ converges to $(x^\ast, y^\ast, z^\ast)$. Note that
	 \begin{eqnarray*}
	 	F_k(x^k,y^k,z^k)&=&f(x^k)+\frac{k}{2} \sum_{i=1}^{m} (g_i^{+}(x^k))^2 +\frac{k}{2} \sum_{i=1}^{p} h_i(x^k)^2 + \frac{k}{2} \sum_{i=1}^{q} (y^k_i - H_i(x^k))^2 \\
		&&+ \frac{k}{2} \sum_{i=1}^{q} (z^k_i - G_i(x^k))^2 + \frac{1}{2} ||(x^k,y^k,z^k) - (x^\ast, y^\ast, z^\ast)||^2 \\
	 	&\leqslant& F_k (x^\ast, y^\ast, z^\ast)~=~ f(x^\ast)~~\forall ~ k \in \mathbb{N}.
	 \end{eqnarray*}
	 Since $\mathcal{S} \cap \Omega$ is compact, therefore sequence $\{f(x^k)\}$ is bounded. This yields
	 \begin{eqnarray*}
	 	\lim\limits_{k \rightarrow \infty} g_i^{+}(x^k) &=&0 ~~\forall ~i=1,...,m, \\
	 	\lim\limits_{k \rightarrow \infty} h_i(x^k) &=&0 ~~\forall ~i=1,...,p,\\
	 	\lim\limits_{k \rightarrow \infty} (y_i^k -  H_i(x^k)) &=&0~~\forall ~i=1,...,q,\\
	 	\lim\limits_{k \rightarrow \infty} (z^k_i - G_i(x^k)) &=&0 ~~\forall ~i=1,...,q.
	 \end{eqnarray*}
	 Otherwise, the left-hand side quantity of the above inequality would become unbounded and hence every accumulation point of $\{(x^k, y^k, z^k)\}$ is feasible for the reformulated MPVC (P).\\
	 \hspace*{0.5cm} The compactness of $\mathcal{S} \cap \Omega$  also ensures the existence of at least one accumulation point. Let $(\bar{x}, \bar{y}, \bar{z})$ be an arbitrary accumulation point of the sequence, then by the continuity  \\
	 \begin{equation*}
	 	f(\bar{x}) ~+~ \frac{1}{2} ||(\bar{x}, \bar{y}, \bar{z}) - (x^\ast, y^\ast, z^\ast)||^2 ~~\leqslant~~f(x^\ast)
	 \end{equation*}
	 and by the feasibility of $(\bar{x}, \bar{y}, \bar{z})$
	 \begin{equation*}
	 f(x^\ast)~\leqslant~f(\bar{x}).
	 \end{equation*}
	 Hence,
	 \begin{equation*}
	 	||(\bar{x}, \bar{y}, \bar{z}) - (x^\ast, y^\ast, z^\ast)||^2~=~0.
	 \end{equation*}
	 Hence, sequence $\{(x^k, y^k, z^k)\}$ converges to $\{(x^\ast, y^\ast, z^\ast)\}$. Now we may assume without loss of generality that $(x^k, y^k, z^k)$ is an interior point of $\mathcal{S} ~~\forall ~k \in \mathbb{N}$. Then by limiting subgradient version \cite[Theorem 3.2]{ye 2} of generalized Lagrange multiplier rule \cite[Theorem 6.1.1]{clarke}, we have
	 \begin{equation*}
	 - \nabla F_k(x^k, y^k, z^k)~\in~ N_\Omega (x^k, y^k, z^k)~~\forall~k \in \mathbb{N},
	 \end{equation*}
	 where the gradient of $F_k$ is given by
	
\begin{eqnarray*}
\label{eq:F-N-cone}
- \nabla F_k (x^k,y^k,z^k)
    \nonumber = & - &  \Biggr [ \left( \begin{array}{ccc}
	  \nabla f(x^k)\\0\\0
	  \end{array}\right)  + \sum_{i=1}^{m} k g_i^{+}(x^k) \left( \begin{array}{ccc} \nabla g_i (x^k)\\0\\0 \end{array}\right)\\
\nonumber  & + &  \sum_{i=1}^{p} k h_i(x^k) \left( \begin{array}{ccc}
	 \nabla h_i(x^k)\\0\\0 \end{array}\right) - \sum_{i=1}^{q} k (y_i^k - H_i(x^k))\left( \begin{array}{ccc} \nabla H_i(x^k)\\-e_i\\0 \end{array}\right)\\
\textbf{}\nonumber  & - &   \sum_{i=1}^{q} k (z_i^k - G_i(x^k))\left( \begin{array}{ccc} \nabla G_i(x^k)\\0\\-e_i \end{array}\right) + \left( \left( \begin{array}{ccc}
	  x^k\\y^k\\z^k \end{array}\right)- \left( \begin{array}{ccc}
	  x^\ast \\y^\ast \\z^\ast \end{array}\right) \right) \Biggr].
	  \end{eqnarray*}
 Hence, by using above gradient and limiting normal cone of $\Omega$ at $(x^k,y^k,z^k)$ as previous in Theorem 3.1, we obtain
	 \begin{eqnarray*}
	 \nabla f(x^k)&+&\sum_{i=1}^{m} k g_i^{+}(x^k) \nabla g_i(x^k)~ +~ \sum_{i=1}^{p} k h_i(x^k) \nabla h_i(x^k) ~-~ \sum_{i=1}^{q} k (y_i^k - H_i(x^k)) \nabla H_i(x^k)\\ &-& \sum_{i=1}^{q} k (z_i^k - G_i(x^k)) \nabla G_i(x^k) ~+~ (x^k - x^\ast)~=~0,
	 \end{eqnarray*}
	 \begin{eqnarray*}
	 k (y^k_i - H_i(x^k)) + (y_i^k -y^\ast) &=& -\xi_i,\\
	 k (z^k_i - G_i(x^k)) + (z_i^k -z^\ast) &=& -\zeta_i.
	 \end{eqnarray*}
	 Now we have\\
	 \hspace*{1.0cm} If $y_i^k >0,~z_i^k<0$ that is if $i \in I_{+-}(x^\ast)$ then
	 \begin{eqnarray*}
	 k (y_i^k - H_i(x^k))&=&-(y_i^k - y^\ast),\\
	 k (z_i^k - G_i(x^k))&=&-(z_i^k - z^\ast).
	 \end{eqnarray*}
	 \hspace*{1.0cm} If $y_i^k >0,~z_i^k = 0$ that is $i \in I_{+0}(x^\ast)$ then
	 \begin{eqnarray*}	
	  k (y_i^k - H_i(x^k))&=&-(y_i^k - y^\ast),\\
	  {\rm and}~~k (z^k_i - G_i(x^k)) + (z_i^k -z^\ast) &=& -\zeta~~\leq ~0,\\
	  k (z_i^k - G_i(x^k))&\leqslant&-(z_i^k - z^\ast).
	 \end{eqnarray*}
	 \hspace*{1.0cm} Similarly if $y_i^k ~=~0~=~z_i^k$ that is $i \in I_{00}(x^\ast)$ then
	 \begin{eqnarray*}
	 	k (z_i^k - G_i(x^k)) \leqslant -(z_i^k - z^\ast).
	 \end{eqnarray*}
	 \hspace*{1.0cm} If $y_i^k =0,~z_i^k < 0$ that is $i \in I_{0-}(x^\ast)$ then
	 \begin{eqnarray*}
	 	k (y_i^k - H_i(x^k))&\geqslant&-(y_i^k - y^\ast),\\
	 	k (z_i^k - G_i(x^k))&=&-(z_i^k - z^\ast).
	 \end{eqnarray*}
	 \hspace*{1.0cm} If $y_i^k =0,~z_i^k > 0$ that is $i \in I_{0+}(x^\ast)$ then
	 \begin{eqnarray*}
	 	k (y_i^k - H_i(x^k))~+~(y_i^k - y^\ast)&\in& \mathbb{R},\\
	 	k (z_i^k - G_i(x^k))&=&-(z_i^k - z^\ast).
	 \end{eqnarray*}
	 Now we define the multipliers,
	 \begin{equation*}
	 \delta_k = \left( 1 + \sum_{i=1}^{m}(k g_i^{+}(x^k))^2 + \sum_{i=1}^{p} (k h_i(x^k))^2 + \sum_{i=1}^{q}(k (y_i^k - H_i(x^k)))^2 + \sum_{i=1}^{q}(k (z_i^k - G_i(x^k)))^2\right)^{\frac{1}{2}}
	 \end{equation*}
	 and
	 \begin{eqnarray*}
	 \alpha_k &=&\frac{1}{\delta_k} \\
	 \lambda_i^k &=&\frac{k \max \{0, g_i(x^k)\}}{\delta_k}~~\forall ~~i=1,...,m,\\
	 \mu_i^k &=& \frac{k h_i(x^k)}{\delta_k} ~~\forall ~~i=1,...,p,\\
	 \eta^{H^k}_{i} &=& \frac{k (y_i^k - H_i(x^k))}{\delta_k} ~~\forall ~~i=1,...,q,\\
	 \eta^{G^k}_{i} &=& \frac{k (z_i^k - G_i(x^k))}{\delta_k} ~~\forall ~~i=1,...,q.
	 \end{eqnarray*}
	 Since $||(\alpha_k,  \lambda^k,  \mu^k,  \eta^{H^k}_{i},  \eta^{G^k}_{i})||= 1$ for all $k \in \mathbb{N}$. Hence, we may assume that this sequence of multipliers converges to some limit $(\alpha , \lambda , \mu , \eta^H , \eta^G)~\neq~0$.\\
	 Now we will analyze some properties of this limit. Since $\alpha_k \rightarrow \alpha$, therefore sequence $\{\delta_k\}$ either diverges to $+\infty$ or converges to some positive value (greater than or equal to one). By continuity of gradients and because of $x^k \rightarrow x^\ast$, we obtain
	 \begin{equation}
	 \alpha \nabla f(x^\ast) + \sum_{i=1}^{m} \lambda_i \nabla g_i(x^\ast) + \sum_{i=1}^{p} \mu_i \nabla h_i(x^\ast) - \sum_{i=1}^{q} \eta_i^G \nabla G_i(x^\ast) - \sum_{i=1}^{q} \eta_i^H \nabla H_i(x^\ast)~=~0.
	 \label{en m stat 1}
	 \end{equation}
	 Furthermore, $\alpha \geqslant 0$ and $\lambda_i \geqslant 0$ for all $i \in I_g(x^\ast)$, additionally we have $\lambda_i = 0$ for all $i \notin I_g(x^\ast)$. Now remember,
	 \begin{eqnarray*}
	  (y^\ast, z^\ast)~=~(H(x^\ast), G(x^\ast)) {~\rm and~}
	  (x^k, y^k, z^k)~\in ~\Omega~~\forall ~k \in \mathbb{N}.
	 \end{eqnarray*}
	 Now, for all $i \in I_{+0}(x^\ast) \cup I_{+-}(x^\ast)$, where $y_i^k >0, z_i^k =0$ or $y_i^k >0, z_i^k <0$, this yields
	 \begin{eqnarray*}
	 \eta^H_i &=& \lim\limits_{k \rightarrow \infty} \frac{k (y^k_i - H_i(x^k))}{\delta_k}\\
	 &=&\lim\limits_{k \rightarrow \infty } \frac{-k (y_i^k - y^\ast)}{\delta_k}~=~0.
	 \end{eqnarray*}
	 Similarly, for all $i \in I_{+-}(x^\ast) \cup I_{0-}(x^\ast) \cup I_{0+}(x^\ast)$, we have $\eta^G_i ~=~0$.\\
	  Now for all $i \in I_{0-}(x^\ast)$
	 \begin{eqnarray*}
	 \eta^H_i &=&\lim\limits_{k \rightarrow \infty} \frac{k (y_i^k - H_i(x^k))}{\delta_k}\\
	 &\geqslant&\lim\limits_{k \rightarrow \infty} \frac{- (y^k_i - y^\ast)}{\delta_k}~=~0
	 \end{eqnarray*}
	 and for all $i \in I_{+0}(x^\ast) \cup I_{00}(x^\ast)$,
	 \begin{eqnarray}
	 \eta^G_i &=&\lim\limits_{k \rightarrow \infty} \frac{k (z^k_i - G_i(x^k))}{\delta_k}\\
	 &\leqslant&\lim\limits_{k \rightarrow \infty} \frac{- (z_i^k - z^\ast)}{\delta_k}~=~0
	 \label{en m stat 2}
	 \end{eqnarray}
	 and  $ \forall i \in I_{00}(x^\ast)$, $\eta^G_i ~\leqslant~0$ and $ \eta^G_i  \eta^H_i ~=~0.$\\
 \hspace*{0.5cm} Replace $\eta_i^G$ by $-\eta_i^G$, the first negative term in eq (\ref{en m stat 1}) becomes positive and we get $(i)$, with $\eta_i^G \geqslant 0 ~\forall~i \in I_{+0}(x^\ast) \cup I_{00}(x^\ast)$ in condition (ii). \\
	 Finally, assume that $(\lambda, \mu, \eta^G, \eta^H) \neq 0$, then $(\lambda^k, \mu^k, \eta^G_k, \eta^H_k) \neq 0$ for all $k \in \mathbb{N}$ sufficiently large. Hence by the definition of multipliers, we have $(x^k, y^k, z^k) \neq (x^\ast, y^\ast, z^\ast)$.\\
	 Therefore, we have for all $k$ sufficiently large
	 \begin{equation*}
	 f(x^k) < f(x^k) + \frac{1}{2} ||(x^k, y^k, z^k) - (x^\ast, y^\ast, z^\ast)|| \leqslant f(x^\ast).
	 \end{equation*}
	 Hence,  $f(x^k) < f(x^\ast)$. Further, we have the following results for all $i$ and all $k$ sufficiently large
	 \begin{equation*}
	 \lambda_i > 0 ~\Rightarrow ~\lambda_i^k > 0 ~\Rightarrow~ g_i(x^k) > 0 ~\Rightarrow ~\lambda_i g_i(x^k) > 0,
	 \end{equation*}
	 \begin{equation*}
	 \mu_i \neq 0 ~\Rightarrow~\mu_i \mu_i^k > 0 ~\Rightarrow~\mu_i h_i(x^k) > 0.
	 \end{equation*}
	 Further, if $\eta^H_i \neq 0$ for $i \in \{1,...,q\}$, then
	 \begin{eqnarray*}
	 \eta^H_i \eta^H_{ik} &>&0,\\
	 \eta^H_i (y_i^k - H_i(x^k))&>&0,\\
	 \eta^H_i H_i(x^k)&<&\eta^H_i y_i^k.
	 \end{eqnarray*}
	 Since whenever $y_i^k > 0$ for infinitely many $k$, then $\eta_i^H = 0$. Hence, in our case $y_i^k = 0~ \forall~k$ sufficiently large, so
	 \begin{eqnarray*}
	 	\eta^H_i H_i(x^k)&<&0.
	 \end{eqnarray*}
	 Now if $\eta^G_i \neq 0$, then
	 \begin{eqnarray*}
	 	\eta^G_i \eta^G_{ik} &>&0,\\
	 	\eta^G_i (z_i^k - G_i(x^k))&>&0,\\
	 	\eta^G_i G_i(x^k)&<&\eta^G_i z_i^k.
	 \end{eqnarray*}
	 If $z_i^k \neq 0$ for infinitely many $k$, then $\eta_i^G = 0$, thus in our case $z_i^k = 0  $  for all sufficiently large $k$.\\
	 Hence
	 \begin{eqnarray*}
	 	\eta^G_i G_i(x^k)&<&0~~~{\rm for~ all}~k~{\rm sufficiently ~large},
	 \end{eqnarray*}
that is \begin{eqnarray*}
	 	-\eta^G_i G_i(x^k)&>&0~~~{\rm for~ all}~k~{\rm sufficiently ~large.}
	 \end{eqnarray*}
As mentioned earlier that we replace $\eta^G_i$ by $-\eta^G_i$. Hence, we obtain
\begin{eqnarray*}
	 	\eta^G_i >0 \Rightarrow \eta^G_i G_i(x^k)&>&0~~~{\rm for~ all}~k~{\rm sufficiently ~large.}
	 \end{eqnarray*}
This completes the proof of Theorem.
	\end{proof}
	\begin{Remark}
	The conditions in Theorem 3.1 and Theorem 4.1 are identical except condition {\rm (iv)} in Theorem 4.1. Hence,  Theorem 3.1 directly follows by the above Theorem, but we have included its proof  in the previous section  to provide an easy approach.
	\end{Remark}
 If we replace limiting normal cone with Fr\'echet normal cone in above result, then multipliers corresponding to biactive set will be changed and it yields an enhanced S-stationarity type condition. Fr\'echet normal cone of $\Omega$ at $(x,y,z)$ can be given as in \cite[Lemma 3.2]{hoheisel 2} by
	\begin{equation}
	\label{Frechet normal}
	N_\Omega^F (x, y, z) = \left\{ \left( \begin{array}{ccc}
	0\\ \xi\\ \zeta
	\end{array}\right) \Bigg | \begin{array}{ccc}
	\xi_i = ~0~ =\zeta_i &;& if ~ y_i >0,~z_i <0,\\
	\xi_i =0 ,~ \zeta_i \geqslant 0 &;&if ~y_i > 0, ~ z_i=0, \\
	\xi_i \leqslant 0 ,~\zeta_i = 0 &;&if ~y_i ~=~0~= ~z_i,\\
	\xi_i \leqslant 0 ,~ \zeta_i = 0 &;&if ~y_i = 0, ~z_i <0,\\
	\xi_i \in \mathbb{R} ,~ \zeta_i = 0 &;&if ~y_i = 0, ~z_i > 0
	\end{array}
	\right \}.
	\end{equation}
	\begin{Theorem}
		 \textbf{Enhanced Fritz John type S-stationary conditions:}
	Let $x^\ast$ be a local minimum of MPVC, then there exist multipliers $\alpha, ~\lambda, ~\mu, ~\eta^H, ~\eta^G$ such that \\
	$ {\rm (i)} ~\alpha \nabla f(x^\ast) + \sum_{i=1}^{m} \lambda_i \nabla g_i(x^\ast) + \sum_{i=1}^{p} \mu_i \nabla h_i (x^\ast) + \sum_{i=1}^{q} \eta^G_i \nabla G_i(x^\ast)
	-\sum_{i=1}^{q} \eta^H_i \nabla H_i(x^\ast)~=~0 $;\\
$ {\rm (ii)}~~ \alpha \geqslant 0,~~
		\lambda_i \geqslant ~~\forall~i \in I_g(x^\ast),~~~
		\lambda_i =0~~\forall~i \notin I_g(x^\ast)$
	\begin{eqnarray*}
	{and}~~~\eta_i^G &=& 0 ~~~~\forall~i \in I_{+-}(x^\ast) \cup I_{0-}(x^\ast) \cup I_{0+}(x^\ast),~~\eta_i^G  \geqslant 0~\forall i \in I_{+0}(x^\ast) \cup I_{00}(x^\ast),\\
\eta_i^H &=& 0 ~~~~\forall~i \in I_+(x^\ast),~~\eta_i^H  \geqslant  0~\forall ~i \in I_{0-}(x^\ast)~~ {\rm and}~~\eta_i^H ~ {\rm is}~ { \rm free}~ \forall~ i \in I_{0+}(x^\ast),\\
		\eta_i^H &\geqslant& 0,~\eta_i^G = 0 ~~~~\forall~i \in I_{00}(x^\ast);
	\end{eqnarray*}
	$ {\rm (iii)} ~~\alpha ,~\lambda ,~\mu ,~\eta^G ,~\eta^H$ are not all equal to zero;\\
	$ {\rm (iv)}$ If $\lambda ,~\mu ,~\eta^G ,~\eta^H $ are not all equal to zero, then there is a sequence $\{x^k\} \rightarrow x^\ast {\rm such~ that}~ \forall k \in \mathbb{N}$, we have
	\begin{eqnarray*}
		f(x^k)&<&f(x^\ast)\\
		 \lambda_i &>&0  ~~\Rightarrow ~~\lambda_i g_i(x^k) >  0 ~~~ \{i=1,...,m\},\\
		 \mu_i &\neq&0  ~~\Rightarrow ~~\mu_i h_i(x^k) >  0 ~~~ \{i=1,...,p\},\\
		 \eta^H_i &\neq&0  ~~\Rightarrow ~~\eta^H_i H_i(x^k) <  0 ~~~ \{i=1,...,q\},\\
		 \eta^G_i &>&0  ~~\Rightarrow ~~\eta^G_i G_i(x^k) >  0 ~~~ \{i=1,...,q\}.	
	\end{eqnarray*}	
		\end{Theorem}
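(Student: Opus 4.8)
The plan is to follow the argument of Theorem~\ref{thmefj-M} almost verbatim, altering only the normal-cone information invoked at the penalized minimizers. First I would recast MPVC as EMPVC with the abstract constraint $(x,y,z)\in\Omega$, fix $\epsilon>0$ so that $x^\ast$ minimizes $f$ over the feasible points of $\mathcal{S}$, and introduce the same McShane--Bertsekas penalty functions $F_k$. Compactness of $\mathcal{S}\cap\Omega$ together with the estimate $F_k(x^k,y^k,z^k)\leqslant f(x^\ast)$ produces a sequence of penalized solutions $\{(x^k,y^k,z^k)\}$ whose accumulation points are feasible, and the quadratic anchor term $\frac{1}{2}\|(x^k,y^k,z^k)-(x^\ast,y^\ast,z^\ast)\|^2$ forces the whole sequence to converge to $(x^\ast,y^\ast,z^\ast)$. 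In particular each iterate eventually lies in the interior of $\mathcal{S}$. All of this is identical to the proof of Theorem~\ref{thmefj-M}.

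The decisive modification is the stationarity condition applied at $(x^k,y^k,z^k)$. Because this point lies in the interior of $\mathcal{S}$ and minimizes the differentiable function $F_k$ over the closed set $\Omega$, the elementary first-order necessary condition yields
\begin{equation*}
-\nabla F_k(x^k,y^k,z^k)\in N_\Omega^F(x^k,y^k,z^k),
\end{equation*}
the \emph{Fr\'echet} normal cone, rather than merely the limiting normal cone used before. Inserting the explicit description (\ref{Frechet normal}) of $N_\Omega^F$ and the gradient of $F_k$ exactly as in Theorem~\ref{thmefj-M}, I would recover the same $x$-block equation together with the componentwise relations
\begin{equation*}
k\bigl(y_i^k-H_i(x^k)\bigr)+(y_i^k-y^\ast)=-\xi_i,\qquad k\bigl(z_i^k-G_i(x^k)\bigr)+(z_i^k-z^\ast)=-\zeta_i.
\end{equation*}
Defining the normalized multipliers $\alpha_k,\lambda^k,\mu^k,\eta^{H^k},\eta^{G^k}$ through the same $\delta_k$, passing to a convergent subsequence, and letting $k\to\infty$ gives a nonzero limit $(\alpha,\lambda,\mu,\eta^H,\eta^G)$ for which (i) and the sign conditions (ii) on the non-biactive index sets hold just as in the M-stationary case.

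The genuine difference surfaces only on the biactive set $I_{00}(x^\ast)$, where $y_i^k=z_i^k=0$ for large $k$. There the Fr\'echet cone (\ref{Frechet normal}) enforces $\zeta_i=0$ and $\xi_i\leqslant0$, in place of the limiting cone's weaker $\zeta_i\geqslant0$, $\xi_i\zeta_i=0$. The relation $k(z_i^k-G_i(x^k))=-(z_i^k-z^\ast)$ then forces $\eta_i^G=0$, while $k(y_i^k-H_i(x^k))\geqslant-(y_i^k-y^\ast)$ forces $\eta_i^H\geqslant0$; after the sign replacement $\eta_i^G\mapsto-\eta_i^G$ this is exactly the S-stationarity requirement $\eta_i^H\geqslant0$, $\eta_i^G=0$ on $I_{00}(x^\ast)$. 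Condition (iv) is then obtained by the identical sequential reasoning: when $(\lambda,\mu,\eta^G,\eta^H)\neq0$ the anchor term delivers $f(x^k)<f(x^\ast)$, and the sign implications for $\lambda_i g_i(x^k)$, $\mu_i h_i(x^k)$, $\eta_i^H H_i(x^k)$ and $\eta_i^G G_i(x^k)$ follow by matching the sign of each limit multiplier with its $k$-th iterate, using that $y_i^k=0$ (respectively $z_i^k=0$) eventually whenever the associated multiplier is nonzero.

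The step demanding the most care is the justification that the sharper inclusion $-\nabla F_k\in N_\Omega^F$ is genuinely available at the penalized minimizers, i.e. that minimizing a smooth function over the closed set $\Omega$ at an interior point of $\mathcal{S}$ produces a regular (Fr\'echet) normal and not only a limiting normal; this is precisely the elementary observation stressed in \cite{Kanzow1}. The accompanying delicate point is the biactive bookkeeping in (\ref{Frechet normal}): one must apply the sign $\xi_i\leqslant0$ correctly, since it is this single sign that upgrades the M-stationary conclusion on $I_{00}(x^\ast)$ to the S-stationary one.
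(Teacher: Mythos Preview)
Your proposal is correct and follows essentially the same route as the paper: reformulate as EMPVC, run the McShane--Bertsekas penalization scheme of Theorem~\ref{thmefj-M} verbatim, and at the penalized minimizers replace the limiting normal cone by the Fr\'echet normal cone $N_\Omega^F$; the sharper description~(\ref{Frechet normal}) on the biactive indices then upgrades the $I_{00}$ sign conditions from $\eta_i^G\eta_i^H=0$ to $\eta_i^H\geqslant0,\ \eta_i^G=0$, while part~(iv) is obtained by the identical sequential argument. The paper's own proof proceeds exactly this way, including the same normalization $\delta_k$, the same limit analysis of the multipliers, and the same derivation of the sign implications in~(iv).
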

		\begin{proof}
The idea of the proof is similar to that of the Theorem \ref{Theorem 4.1}. We follow the same lines of the proof of the Theorem \ref{Theorem 4.1} up to just before the generalized Lagrange multiplier rule using limiting normal cone. After that,  we write out  the  standard optimality condition in terms of the gradient of $F_k$ and Fr\'echet normal cone,		
			
			\begin{equation*}
			- \nabla F_k(x^k, y^k, z^k)~\in~ N_\Omega^F (x^k, y^k, z^k)~~\forall~k \in \mathbb{N},
			\end{equation*}
			 here gradient and Fr\'echet normal cone are borrowed   from Theorem \ref{Theorem 4.1} and eq (\ref{Frechet normal}) respectively, again we obtain
			\begin{eqnarray*}
				\nabla f(x^k)&+&\sum_{i=1}^{m} k g_i^{+}(x^k) \nabla g_i(x^k)~ +~ \sum_{i=1}^{p} k h_i(x^k) \nabla h_i(x^k) ~-~ \sum_{i=1}^{q} k (y_i^k - H_i(x^k)) \nabla H_i(x^k)\\ &-& \sum_{i=1}^{q} k (z_i^k - G_i(x^k)) \nabla G_i(x^k) ~+~ (x^k - x^\ast)~=~0,
			\end{eqnarray*}
			\begin{eqnarray*}
				k (y^k_i - H_i(x^k)) + (y_i^k -y^\ast) &=& -\xi_i,\\
				k (z^k_i - G_i(x^k)) + (z_i^k -z^\ast) &=& -\zeta_i.
			\end{eqnarray*}
			Now we have,\\
			\hspace*{1.0cm} if $y_i^k >0,~z_i^k<0$ that is if $i \in I_{+-}(x^\ast)$ then
			\begin{eqnarray*}
				k (y_i^k - H_i(x^k))&=&-(y_i^k - y^\ast),\\
				k (z_i^k - G_i(x^k))&=&-(z_i^k - z^\ast).
			\end{eqnarray*}
			\hspace*{1.0cm} If $y_i^k >0,~z_i^k = 0$ that is $i \in I_{+0}(x^\ast)$ then
			\begin{eqnarray*}	
				k (y_i^k - H_i(x^k))&=&-(y_i^k - y^\ast),\\
				{\rm and}~~k (z^k_i - G_i(x^k)) + (z_i^k -z^\ast) &=& -\zeta~~\leq ~0,\\
				k (z_i^k - G_i(x^k))&\leqslant&-(z_i^k - z^\ast).
			\end{eqnarray*}
			\hspace*{1.0cm} If $y_i^k =0,~z_i^k < 0$ or $y_i^k ~=~0~=~z_i^k$  that is $i \in I_{0-}(x^\ast) \cup I_{00}(x^\ast)$  then
			\begin{eqnarray*}
				k (y_i^k - H_i(x^k))&\geqslant&-(y_i^k - y^\ast),\\
				k (z_i^k - G_i(x^k))&=&-(z_i^k - z^\ast).
			\end{eqnarray*}
			\hspace*{1.0cm} If $y_i^k =0,~z_i^k > 0$ that is $i \in I_{0+}(x^\ast)$ then
			\begin{eqnarray*}
				k (y_i^k - H_i(x^k))~+~(y_i^k - y^\ast)&\in& \mathbb{R},\\
				k (z_i^k - G_i(x^k))&=&-(z_i^k - z^\ast).
			\end{eqnarray*}
			Now we define the multipliers,
			\begin{equation*}
			\delta_k = \left( 1 + \sum_{i=1}^{m}(k g_i^{+}(x^k))^2 + \sum_{i=1}^{p} (k h_i(x^k))^2 + \sum_{i=1}^{q}(k (y_i^k - H_i(x^k)))^2 + \sum_{i=1}^{q}(k (z_i^k - G_i(x^k)))^2\right)^{\frac{1}{2}}
			\end{equation*}
			and
			\begin{eqnarray*}
				\alpha_k &=&\frac{1}{\delta_k} \\
				\lambda_i^k &=&\frac{k \max \{0, g_i(x^k)\}}{\delta_k}~~\forall ~~i=1,...,m,\\
				\mu_i^k &=& \frac{k h_i(x^k)}{\delta_k} ~~\forall ~~i=1,...,p,\\
				\eta^{H^k}_{i} &=& \frac{k (y_i^k - H_i(x^k))}{\delta_k} ~~\forall ~~i=1,...,q,\\
				\eta^{G^k}_{i} &=& \frac{k (z_i^k - G_i(x^k))}{\delta_k} ~~\forall ~~i=1,...,q.
			\end{eqnarray*}
			Since $||(\alpha_k,  \lambda^k,  \mu^k,  \eta^{H^k}_{i},  \eta^{G^k}_{i})||= 1$ for all $k \in \mathbb{N}$. Hence, we may assume that this sequence of multipliers converges to some limit $(\alpha , \lambda , \mu , \eta^H , \eta^G)~\neq~0$.\\
			Now we will analyze some properties of this limit. Since $\alpha_k \rightarrow \alpha$, therefore sequence $\{\delta_k\}$ either diverges to $+\infty$ or converges to some positive value (greater than or equal to one). By continuity of gradients and because of $x^k \rightarrow x^\ast$, we obtain
			\begin{equation}
			\alpha \nabla f(x^\ast) + \sum_{i=1}^{m} \lambda_i \nabla g_i(x^\ast) + \sum_{i=1}^{p} \mu_i \nabla h_i(x^\ast) - \sum_{i=1}^{q} \eta_i^G \nabla G_i(x^\ast) - \sum_{i=1}^{q} \eta_i^H \nabla H_i(x^\ast)~=~0.
			\label{en m stat 1}
			\end{equation}
			Furthermore, $\alpha \geqslant 0$ and $\lambda_i \geqslant 0$ for all $i \in I_g(x^\ast)$, additionally we have $\lambda_i = 0$ for all $i \notin I_g(x^\ast)$. Now remember,
			\begin{eqnarray*}
				(y^\ast, z^\ast)~=~(H(x^\ast), G(x^\ast)) {~\rm and~}
				(x^k, y^k, z^k)~\in ~\Omega~~\forall ~k \in \mathbb{N}.
			\end{eqnarray*}
			Now, for all $i \in I_{+0}(x^\ast) \cup I_{+-}(x^\ast)$, where $y_i^k >0, z_i^k =0$ or $y_i^k >0, z_i^k <0$, this yields
			\begin{eqnarray*}
				\eta^H_i &=& \lim\limits_{k \rightarrow \infty} \frac{k (y^k_i - H_i(x^k))}{\delta_k}\\
				&=&\lim\limits_{k \rightarrow \infty } \frac{-k (y_i^k - y^\ast)}{\delta_k}~=~0.
			\end{eqnarray*}
			Similarly, for all $i \in I_{+-}(x^\ast) \cup I_{0-}(x^\ast) \cup I_{0+}(x^\ast) \cup I_{00}(x^\ast)$, we have $\eta^G_i ~=~0$.\\
			Now for all $i \in I_{0-}(x^\ast) \cup I_{00}(x^\ast)$
			\begin{eqnarray*}
				\eta^H_i &=&\lim\limits_{k \rightarrow \infty} \frac{k (y_i^k - H_i(x^k))}{\delta_k}\\
				&\geqslant&\lim\limits_{k \rightarrow \infty} \frac{- (y^k_i - y^\ast)}{\delta_k}~=~0
			\end{eqnarray*}
			and for all $i \in I_{+0}(x^\ast)$,
			\begin{eqnarray}
			\eta^G_i &=&\lim\limits_{k \rightarrow \infty} \frac{k (z^k_i - G_i(x^k))}{\delta_k}\\
			&\leqslant&\lim\limits_{k \rightarrow \infty} \frac{- (z_i^k - z^\ast)}{\delta_k}~=~0
			\label{en m stat 2}
			\end{eqnarray}
			that is  $ \forall i \in I_{00}(x^\ast)$, $\eta^G_i = 0$ and $\eta^H_i \geqslant 0.$\\
			\hspace*{0.5cm} Replace $\eta_i^G$ by $-\eta_i^G$, the first negative term in eq (\ref{en m stat 1}) becomes positive and we get $(i)$, with $\eta_i^G \geqslant 0 ~\forall~i \in I_{+0}(x^\ast)$ in condition (ii). Also $\eta_i^G = 0$ can be treated as $\eta_i^G \geqslant 0$ for biactive set.\\
			Finally, assume that $(\lambda, \mu, \eta^G, \eta^H) \neq 0$, then $(\lambda^k, \mu^k, \eta^G_k, \eta^H_k) \neq 0$ for all $k \in \mathbb{N}$ sufficiently large. Hence by the definition of multipliers, we have $(x^k, y^k, z^k) \neq (x^\ast, y^\ast, z^\ast)$.\\
			Therefore, we have for all $k$ sufficiently large
			\begin{equation*}
			f(x^k) < f(x^k) + \frac{1}{2} ||(x^k, y^k, z^k) - (x^\ast, y^\ast, z^\ast)|| \leqslant f(x^\ast).
			\end{equation*}
			Hence,  $f(x^k) < f(x^\ast)$. Further, we have the following results for all $i$ and all $k$ sufficiently large
			\begin{equation*}
			\lambda_i > 0 ~\Rightarrow ~\lambda_i^k > 0 ~\Rightarrow~ g_i(x^k) > 0 ~\Rightarrow ~\lambda_i g_i(x^k) > 0,
			\end{equation*}
			\begin{equation*}
			\mu_i \neq 0 ~\Rightarrow~\mu_i \mu_i^k > 0 ~\Rightarrow~\mu_i h_i(x^k) > 0.
			\end{equation*}
			Further, if $\eta^H_i \neq 0$ for $i \in \{1,...,q\}$, then
			\begin{eqnarray*}
				\eta^H_i \eta^H_{ik} &>&0,\\
				\eta^H_i (y_i^k - H_i(x^k))&>&0,\\
				\eta^H_i H_i(x^k)&<&\eta^H_i y_i^k.
			\end{eqnarray*}
			Since whenever $y_i^k > 0$ for infinitely many $k$, then $\eta_i^H = 0$. Hence, in our case $y_i^k = 0~ \forall~k$ sufficiently large, so
			\begin{eqnarray*}
				\eta^H_i H_i(x^k)&<&0.
			\end{eqnarray*}
			Now if $\eta^G_i \neq 0$, then
			\begin{eqnarray*}
				\eta^G_i \eta^G_{ik} &>&0,\\
				\eta^G_i (z_i^k - G_i(x^k))&>&0,\\
				\eta^G_i G_i(x^k)&<&\eta^G_i z_i^k.
			\end{eqnarray*}
			If $z_i^k \neq 0$ for infinitely many $k$, then $\eta_i^G = 0$, thus in our case $z_i^k = 0  $  for all sufficiently large $k$.\\
			Hence
			\begin{eqnarray*}
				\eta^G_i G_i(x^k)&<&0~~~{\rm for~ all}~k~{\rm sufficiently ~large},
			\end{eqnarray*}
			that is \begin{eqnarray*}
				-\eta^G_i G_i(x^k)&>&0~~~{\rm for~ all}~k~{\rm sufficiently ~large.}
			\end{eqnarray*}
			As mentioned earlier that we replace $\eta^G_i$ by $-\eta^G_i$. Hence, we obtain
			\begin{eqnarray*}
				\eta^G_i >0 \Rightarrow \eta^G_i G_i(x^k)&>&0~~~{\rm for~ all}~k~{\rm sufficiently ~large.}
			\end{eqnarray*}
			This completes the proof of Theorem.
		\end{proof}

	 It is necessary to mention here that Theorem \ref{thm3.2} is just a consequence of the above result, which we have stated in the previous section without proof. Indeed, we have the following
\begin{Corollary}
Let $x^\ast$ be a local minimum of MPVC, then there exist multipliers $\alpha, ~\lambda, ~\mu, ~\eta^H, ~\eta^G$, not all zero, such that the Fritz John type S-stationarity given in Theorem \ref{thm3.2} holds.
\end{Corollary}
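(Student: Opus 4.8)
The plan is to observe that the preceding Enhanced Fritz John type S-stationary conditions are a strict strengthening of the Fritz John type S-stationary conditions stated in Theorem \ref{thm3.2}, so that the latter follows immediately by discarding the additional sequential information. In other words, the corollary is a pure weakening of the result just proved, obtained by retaining only its first three conclusions.

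First I would apply the preceding enhanced Theorem at the local minimizer $x^\ast$. This yields multipliers $\alpha, \lambda, \mu, \eta^H, \eta^G$ satisfying the stationarity equation (i), the sign and support restrictions (ii) --- in particular $\alpha \geqslant 0$, the standard sign pattern on $\lambda$ and on $\eta^G, \eta^H$ over the index sets $I_{+-}, I_{0-}, I_{0+}, I_{+0}$, together with the characteristic S-stationary biactive condition $\eta_i^H \geqslant 0$, $\eta_i^G = 0$ for all $i \in I_{00}(x^\ast)$ --- as well as the non-triviality (iii) and the asymptotic descent/violation property (iv). All of these are available at no cost since $x^\ast$ is assumed to be a local minimum.

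Next I would verify that the conditions (i), (ii), (iii) produced above coincide verbatim with conditions (i) and (ii) of Theorem \ref{thm3.2} together with the requirement that $(\alpha, \lambda, \mu, \eta^H, \eta^G)$ is not all zero. The stationarity equation in (i) is literally identical in the two statements, and the full multiplier restriction in (ii) --- including the crucial S-stationary sign condition $\eta_i^H \geqslant 0$, $\eta_i^G = 0$ on the biactive set $I_{00}(x^\ast)$ --- matches in both theorems. Hence, simply ignoring the sequential condition (iv), the surviving conditions are exactly the assertion of Theorem \ref{thm3.2}, which completes the proof.

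There is no genuine obstacle here, as the corollary merely forgets one conclusion of a stronger theorem; the only point requiring attention is the routine bookkeeping check that the index-set partitions and sign conventions in the two statements agree. Since the sign normalization $\eta_i^G \mapsto -\eta_i^G$ has already been absorbed into the statement of the enhanced theorem, this verification is immediate, and the entire argument reduces to citing the preceding result and dropping clause (iv).
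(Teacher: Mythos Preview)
Your proposal is correct and matches the paper's own reasoning: the paper does not give a separate proof but simply remarks that Theorem~\ref{thm3.2} (and hence this corollary) is an immediate consequence of the Enhanced Fritz John type S-stationary theorem, obtained by discarding the sequential condition~(iv). Your only addition is the routine verification that the multiplier sign conventions agree, which is exactly the bookkeeping the paper leaves implicit.
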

			Now, we can define some enhanced stationary conditions associated with enhanced Fritz John type conditions, which we have derived above.
	 \begin{Definition}
	 	\textbf{ Enhanced M-stationary conditions for MPVC:} Let $x^\ast$ be a feasible point of MPVC. Then we say that the enhanced M-stationary condition holds at $x^\ast$ if and only if there are multipliers $(\lambda, \mu, \eta^G, \eta^H) \neq (0,0,0,0)$ such that \\
	 	${\rm (i)}~~ 0 = \nabla f(x^\ast)~+~\sum_{i =1}^{m} \lambda_i \nabla g_i(x^\ast)~+~ \sum_{i =1}^{p} \mu_i \nabla h_i(x^\ast)~+~\sum_{i=1}^{q} \eta_i^G \nabla G_i(x^\ast)~-~\sum_{i=1}^{q} \eta_i^H \nabla H_i(x^\ast)$,\\
	 	${\rm (ii)} ~~~\lambda_i \geqslant 0 ~~~\forall~i \in I_g(x^\ast),~
	 		\lambda_i =0 ~\forall~i \notin I_g(x^\ast)$
\begin{eqnarray*}	 		
{and}~~~\eta_i^G &=& 0 ~~~~\forall~i \in I_{+-}(x^\ast) \cup I_{0-}(x^\ast) \cup I_{0+}(x^\ast),~~\eta_i^G  \geqslant 0~\forall i \in I_{+0}(x^\ast) \cup I_{00}(x^\ast),\\
\eta_i^H &=& 0 ~~~~\forall~i \in I_+(x^\ast),~~\eta_i^H  \geqslant  0~\forall ~i \in I_{0-}(x^\ast)~~ {\rm and}~~\eta_i^H ~ {\rm is}~ { \rm free}~ \forall~ i \in I_{0+}(x^\ast),\\
\eta_i^G.\eta_i^H & = &0  ~~~~\forall~i \in I_{00}(x^\ast),
\end{eqnarray*}
	 	${\rm (iii)} $ If $ \lambda, \mu, \eta^G, \eta^H $ not all equal to zero, then there is a sequence $\{x^k\} \rightarrow x^\ast$ such that  $\forall ~k \in \mathbb{N}$,we have
	\begin{eqnarray*}
				 \lambda_i &>&0  ~~\Rightarrow ~~\lambda_i g_i(x^k) >  0 ~~~ \{i=1,...,m\},\\
		 \mu_i &\neq&0  ~~\Rightarrow ~~\mu_i h_i(x^k) >  0 ~~~ \{i=1,...,p\},\\
		 \eta^H_i &\neq&0  ~~\Rightarrow ~~\eta^H_i H_i(x^k) <  0 ~~~ \{i=1,...,q\},\\
		 \eta^G_i &>&0  ~~\Rightarrow ~~\eta^G_i G_i(x^k) >  0 ~~~ \{i=1,...,q\}.	
	\end{eqnarray*}	
	 \end{Definition}
\begin{Definition}
\textbf{Enhanced S-stationary conditions for MPVC:} Any feasible point $x^\ast$ of MPVC is said to satisfy enhanced S-stationary condition thereat if and only if there exist multipliers $(\lambda, \mu, \eta^G, \eta^H) \neq (0,0,0,0)$ such that \\
	${\rm (i)}~~ 0 = \nabla f(x^\ast)~+~\sum_{i =1}^{m} \lambda_i \nabla g_i(x^\ast)~+~ \sum_{i =1}^{p} \mu_i \nabla h_i(x^\ast)~+~\sum_{i=1}^{q} \eta_i^G \nabla G_i(x^\ast)~-~\sum_{i=1}^{q} \eta_i^H \nabla H_i(x^\ast)$,\\
	$ {\rm (ii)} ~~~\lambda_i \geqslant 0 ~~~\forall~i \in I_g(x^\ast),~~~\lambda_i = 0 ~~~\forall~i \notin I_g(x^\ast)$
\begin{eqnarray*}
{and}~~~\eta_i^G &=& 0 ~~~~\forall~i \in I_{+-}(x^\ast) \cup I_{0-}(x^\ast) \cup I_{0+}(x^\ast),~~\eta_i^G  \geqslant 0~\forall i \in I_{+0}(x^\ast) \cup I_{00}(x^\ast),\\
\eta_i^H &=& 0 ~~~~\forall~i \in I_+(x^\ast),~~\eta_i^H  \geqslant  0~\forall ~i \in I_{0-}(x^\ast)~~ {\rm and}~~\eta_i^H ~ {\rm is}~ { \rm free}~ \forall~ i \in I_{0+}(x^\ast),\\
\eta_i^H &\geqslant& 0,~~ \eta_i^G = 0~~~~\forall i \in I_{00}(x^\ast),
\end{eqnarray*}
	${\rm (iii)} $ If $ \lambda, \mu, \eta^G, \eta^H $ not all equal to zero, then there is a sequence $\{x^k\} \rightarrow x^\ast$ such that  $\forall ~k \in \mathbb{N}$, we have
	\begin{eqnarray*}
				 \lambda_i &>&0  ~~\Rightarrow ~~\lambda_i g_i(x^k) >  0 ~~~ \{i=1,...,m\},\\
		 \mu_i &\neq&0  ~~\Rightarrow ~~\mu_i h_i(x^k) >  0 ~~~ \{i=1,...,p\},\\
		 \eta^H_i &\neq&0  ~~\Rightarrow ~~\eta^H_i H_i(x^k) <  0 ~~~ \{i=1,...,q\},\\
		 \eta^G_i &>&0  ~~\Rightarrow ~~\eta^G_i G_i(x^k) >  0 ~~~ \{i=1,...,q\}.	
	\end{eqnarray*}	
\end{Definition}
These enhanced conditions are stronger than those classic conditions. It is interesting to note that enhanced M-stationarity, being stronger than M-stationarity, is still weaker than S-stationarity (equivalently standard KKT, see \cite{achtziger}), in the sense that $\eta_i^G   \eta_i^H = 0 ~\forall~ i \in I_{00}(x^\ast)$ and other conditions in enhanced M-stationarity need not imply $\eta_i^H \geqslant 0,\eta_i^G = 0 ~\forall~ i \in I_{00}(x^\ast)$ to be S- stationarity.
\begin{Remark}
	From the above two definitions it is obvious that at any feasible point of MPVC, \\
	    \hspace*{3.5cm} enhanced S-stationarity $\Rightarrow$ enhanced M-stationarity.
\end{Remark}

\hspace*{0.5 cm}	Now, we  are in a position to consider some more constraint qualifications similar to pseudonormality and quasinormality concepts introduced by Bertsekas and Ozdaglar \cite{bertsekas2}, which   essentially  explore the behaviour of the problem in a neighbourhood  of solution points. These constraint qualifications  have been extended to the nonsmooth case by Ye and Zhang \cite{ye 1}, in terms of limiting subdifferential and also for MPEC as generalized pseudonormality and generalized quasinormality in \cite{ye}, by following the smooth version of MPEC notions, which were first introduced in \cite{Kanzow1}. These constraint qualifications associated with MPVC essentially play the same role as in MPEC case. Moreover, one of them will serve as a new constraint qualification and provides a sufficient condition for local error bound for MPVC (see Theorem \ref{thm last}). The following constraint qualification was introduced in \cite{hu} and shown to be a sufficient condition for exactness of the classical $l_1$ penalty function for MPVC under a reasonable assumption.
	\begin{Definition}
	\label{Def4} A vector $x^\ast \in \mathcal{C}$ is said to satisfy \textit{MPVC-generalized pseudonormality}, if there is no multiplier $(\lambda, \mu, \eta^H, \eta^G)~\neq~0$ such that \\
	
	$  {\rm (i)} ~\sum_{i=1}^{m} \lambda_i \nabla g_i(x^\ast) + \sum_{i=1}^{p} \mu_i \nabla h_i (x^\ast) + \sum_{i=1}^{q} \eta^G_i \nabla G_i(x^\ast)
	-\sum_{i=1}^{q} \eta^H_i \nabla H_i(x^\ast)~=~0$,\\
	
	$ {\rm (ii)} ~~~\lambda_i \geqslant 0 ~~~\forall~i \in I_g(x^\ast),~~~\lambda_i = 0 ~~~\forall~i \notin I_g(x^\ast)$
\begin{eqnarray*}
{and}~~~\eta_i^G &=& 0 ~~~~\forall~i \in I_{+-}(x^\ast) \cup I_{0-}(x^\ast) \cup I_{0+}(x^\ast),~~\eta_i^G  \geqslant 0~\forall i \in I_{+0}(x^\ast) \cup I_{00}(x^\ast),\\
\eta_i^H &=& 0 ~~~~\forall~i \in I_+(x^\ast),~~\eta_i^H  \geqslant  0~\forall ~i \in I_{0-}(x^\ast)~~ {\rm and}~~\eta_i^H ~ {\rm is}~ { \rm free}~ \forall~ i \in I_{0+}(x^\ast),\\
\eta_i^H  \eta_i^G &=& 0~~~~\forall i \in I_{00}(x^\ast),
\end{eqnarray*}
	
	$ {\rm (iii)} $ there is a sequence $\{x^k\} \rightarrow x^\ast$ such that the following is true for all $k \in \mathbb{N}$ \\
	\begin{equation*}
	\sum_{i=1}^{m} \lambda_i g_i(x^k)~+~\sum_{i=1}^{p} \mu_i h_i(x^k)~+~\sum_{i=1}^{q} \eta_i^G G_i(x^k)~-~\sum_{i=1}^{q} \eta_i^H H_i(x^k)~>~0.
	\end{equation*}
	\end{Definition}
	Now, we introduce a new constraint qualification, called MPVC-generalized  quasinormality analogous to MPEC-generalized quasinormality.
	\begin{Definition}
		 A vector $x^\ast \in \mathcal{C}$ is said to satisfy \textit{MPVC-generalized quasinormality}, if there is no multiplier $(\lambda, \mu, \eta^H, \eta^G)~\neq~0$ such that \\
		
		$ {\rm (i)} \sum_{i=1}^{m} \lambda_i \nabla g_i(x^\ast) + \sum_{i=1}^{p} \mu_i \nabla h_i (x^\ast) + \sum_{i=1}^{q} \eta^G_i \nabla G_i(x^\ast)
		-\sum_{i=1}^{q} \eta^H_i \nabla H_i(x^\ast)~=~0$,\\
		
		$ {\rm (ii)} ~~~\lambda_i \geqslant 0 ~~~\forall~i \in I_g(x^\ast),~~~\lambda_i = 0 ~~~\forall~i \notin I_g(x^\ast)$
\begin{eqnarray*}
{and}~~~\eta_i^G &=& 0 ~~~~\forall~i \in I_{+-}(x^\ast) \cup I_{0-}(x^\ast) \cup I_{0+}(x^\ast),~~\eta_i^G  \geqslant 0~\forall i \in I_{+0}(x^\ast) \cup I_{00}(x^\ast),\\
\eta_i^H &=& 0 ~~~~\forall~i \in I_+(x^\ast),~~\eta_i^H  \geqslant  0~\forall ~i \in I_{0-}(x^\ast)~~{\rm and}~~\eta_i^H ~ {\rm is}~ { \rm free}~ \forall~ i \in I_{0+}(x^\ast),\\
\eta_i^H  \eta_i^G &=& 0~~~~\forall i \in I_{00}(x^\ast),
\end{eqnarray*}					
$ {\rm (iii)} $ There is a sequence $\{x^k\} \rightarrow x^\ast$ such that the following is true $\forall k \in \mathbb{N} $, we have
\begin{eqnarray*}
		 \lambda_i &>&0  ~~\Rightarrow ~~\lambda_i g_i(x^k) >  0 ~~~ \{i=1,...,m\},\\
		 \mu_i &\neq&0  ~~\Rightarrow ~~\mu_i h_i(x^k) >  0 ~~~ \{i=1,...,p\},\\
		 \eta^H_i &\neq&0  ~~\Rightarrow ~~\eta^H_i H_i(x^k) <  0 ~~~ \{i=1,...,q\},\\
		 \eta^G_i &>&0  ~~\Rightarrow ~~\eta^G_i G_i(x^k) >  0 ~~~ \{i=1,...,q\}.	
	\end{eqnarray*}	
\end{Definition}
	We show, first time, that the MPVC-generalized  quasinormality is a sufficient condition for the existence of a local error bound of the MPVC, see Theorem \ref{lebthm}.\\
\hspace*{0.5 cm} To this end, we note that the MPVC-generalized pseudonormality obviously implies the MPVC-generalized  quasinormality, but not conversely. Combining this fact with \cite[Proposition 2.1]{hu}, we have following relationships among these constraint qualifications:\\\\
	MPVC-LICQ $\Rightarrow$ MPVC-MFCQ $\Rightarrow$	MPVC-GMFCQ $\Rightarrow$ MPVC-generalized pseudonormality $\Rightarrow$ MPVC-generalized quasinormality.\\
	
 \begin{Theorem}
 		Let $x^\ast$ be a local minimum of MPVC  satisfying MPVC-generalized quasinormality. Then $x^\ast$ is an enhanced M-stationary point.
 	\label{m stat}
 \end{Theorem}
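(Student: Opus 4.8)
The plan is to invoke the enhanced Fritz John Theorem~\ref{Theorem 4.1} and then promote its conclusion to enhanced M-stationarity by ruling out the degenerate multiplier on the objective. Since $x^\ast$ is a local minimum, Theorem~\ref{Theorem 4.1} supplies multipliers $(\alpha,\lambda,\mu,\eta^H,\eta^G)\neq 0$ with $\alpha\geqslant 0$ satisfying its conditions (i)--(iv). Comparing with the definition of enhanced M-stationarity, conditions (i), (ii) and the sequence condition (iv) already have exactly the required form; the only discrepancy is the coefficient $\alpha$ in front of $\nabla f(x^\ast)$, which must be normalized to $1$. Thus everything reduces to showing $\alpha>0$, after which a single positive rescaling finishes the argument.

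To show $\alpha\neq 0$ I would argue by contradiction. Suppose $\alpha=0$. Then condition (iii) of Theorem~\ref{Theorem 4.1} forces $(\lambda,\mu,\eta^H,\eta^G)\neq 0$, and with $\alpha=0$ the stationarity equation (i) becomes
\begin{equation*}
\sum_{i=1}^{m}\lambda_i\nabla g_i(x^\ast)+\sum_{i=1}^{p}\mu_i\nabla h_i(x^\ast)+\sum_{i=1}^{q}\eta_i^G\nabla G_i(x^\ast)-\sum_{i=1}^{q}\eta_i^H\nabla H_i(x^\ast)=0,
\end{equation*}
which is precisely condition (i) of MPVC-generalized quasinormality. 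The sign restrictions in (ii) of Theorem~\ref{Theorem 4.1} (including $\eta_i^G\eta_i^H=0$ for $i\in I_{00}(x^\ast)$) are identical to those in (ii) of the quasinormality definition, and the sequence $\{x^k\}\to x^\ast$ provided by (iv) satisfies exactly the four implications $\lambda_i>0\Rightarrow\lambda_i g_i(x^k)>0$, $\mu_i\neq 0\Rightarrow\mu_i h_i(x^k)>0$, $\eta_i^H\neq 0\Rightarrow\eta_i^H H_i(x^k)<0$ and $\eta_i^G>0\Rightarrow\eta_i^G G_i(x^k)>0$ demanded in (iii) of that definition. Hence the nonzero multiplier $(\lambda,\mu,\eta^H,\eta^G)$ would witness a violation of MPVC-generalized quasinormality at $x^\ast$, contradicting the hypothesis. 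Therefore $\alpha>0$.

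With $\alpha>0$ I would divide all multipliers by $\alpha$, setting $\tilde\lambda=\lambda/\alpha$, $\tilde\mu=\mu/\alpha$, $\tilde\eta^H=\eta^H/\alpha$ and $\tilde\eta^G=\eta^G/\alpha$. Positive scaling leaves the sign conditions in (ii) untouched, turns (i) into the normalized equation $\nabla f(x^\ast)+\sum_i\tilde\lambda_i\nabla g_i(x^\ast)+\cdots=0$ of enhanced M-stationarity, and preserves the implications in (iv) verbatim, since $\tilde\lambda_i>0\iff\lambda_i>0$, $\tilde\mu_i\neq 0\iff\mu_i\neq 0$, and so on, so the very same sequence $\{x^k\}$ works (the extra information $f(x^k)<f(x^\ast)$ from (iv) is simply not needed here). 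This yields exactly conditions (i)--(iii) in the definition of enhanced M-stationarity.

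I expect the main obstacle to be the careful bookkeeping in the $\alpha=0$ step: one must match, index set by index set, the sign pattern of $(\lambda,\mu,\eta^H,\eta^G)$ and the four sequence implications against the definition of MPVC-generalized quasinormality, verifying in particular that the biactive condition $\eta_i^G\eta_i^H=0$ on $I_{00}(x^\ast)$ and the sign $\eta_i^G\geqslant 0$ on $I_{+0}(x^\ast)\cup I_{00}(x^\ast)$ line up. A secondary point worth flagging is the degenerate case $\nabla f(x^\ast)=0$, in which the rescaled constraint multipliers could in principle all vanish; but whenever $\nabla f(x^\ast)\neq 0$, equation (i) forces $(\tilde\lambda,\tilde\mu,\tilde\eta^H,\tilde\eta^G)\neq 0$, so the nontriviality required by the definition is automatic in the only case of genuine interest.
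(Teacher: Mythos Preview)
Your proposal is correct and follows essentially the same approach as the paper: invoke the enhanced Fritz John conditions of Theorem~\ref{Theorem 4.1}, argue that $\alpha=0$ would yield a nonzero multiplier violating MPVC-generalized quasinormality, and then rescale by $\alpha>0$. Your writeup is in fact more careful than the paper's, which dispatches the argument in three sentences; the edge case you flag concerning $\nabla f(x^\ast)=0$ and the requirement $(\lambda,\mu,\eta^G,\eta^H)\neq 0$ in the definition of enhanced M-stationarity is a genuine wrinkle that the paper's proof does not address either.
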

 \begin{proof}
 	Suppose that $x^\ast$ is a local minimum of MPVC, then by enhanced Fritz John type  M-stationary conditions of Theorem \ref{thmefj-M}, we have
 	\begin{equation*}
 	\alpha \nabla f(x^\ast) + \sum_{i=1}^{m} \lambda_i \nabla g_i(x^\ast) + \sum_{i=1}^{p} \mu_i \nabla h_i (x^\ast) - \sum_{i=1}^{q} \eta_i^H \nabla H_i(x^\ast) + \sum_{i=1}^{q} \eta_i^G \nabla G_i(x^\ast)~=~0
 	\end{equation*}
 	and remaining results of the optimality conditions  also hold including $(\alpha, \lambda, \mu, \eta^G, \eta^H)~\neq~(0,0,0,0,0)$.\\
 	\hspace*{0.5cm} Here if $\alpha = 0$, then it shows the existence of nonzero multipliers which violates the MPVC-generalized quasinormality condition. Hence $\alpha > 0$, and then by proper scaling we obtain the enhanced M-stationary conditions at $x^\ast$.
 \end{proof}
 \begin{Corollary}
If $x^\ast$ is a local minimizer of MPVC satisfying MPVC-GMFCQ or MPVC-generalized pseudonormality, then $x^\ast$ is an enhanced M-stationary point.
 \end{Corollary}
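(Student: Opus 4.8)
The plan is to obtain the corollary as an immediate consequence of Theorem~\ref{m stat}, which already establishes that MPVC-generalized quasinormality at a local minimizer forces enhanced M-stationarity. The only thing I need to supply is that each of the two hypotheses in the corollary is at least as strong as MPVC-generalized quasinormality, so that Theorem~\ref{m stat} applies verbatim. This is precisely the tail of the implication chain recorded just above the corollary,
\begin{equation*}
\text{MPVC-GMFCQ} \Rightarrow \text{MPVC-generalized pseudonormality} \Rightarrow \text{MPVC-generalized quasinormality}.
\end{equation*}

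First I would dispose of the two implications. For MPVC-GMFCQ $\Rightarrow$ MPVC-generalized pseudonormality, I note that MPVC-GMFCQ asserts the nonexistence of a nonzero multiplier satisfying conditions (i)--(ii) alone, whereas MPVC-generalized pseudonormality only asks for the nonexistence of a nonzero multiplier satisfying (i)--(ii) \emph{together with} the extra sequential requirement (iii); ruling out the larger family of admissible multipliers is plainly the stronger statement, so GMFCQ implies generalized pseudonormality. For MPVC-generalized pseudonormality $\Rightarrow$ MPVC-generalized quasinormality, I would argue by contraposition: if quasinormality fails, there is a nonzero multiplier and a sequence $\{x^k\}\to x^\ast$ realizing the componentwise sign conditions of the quasinormality definition. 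For such a sequence each term $\lambda_i g_i(x^k)$, $\mu_i h_i(x^k)$, $\eta_i^G G_i(x^k)$ and $-\eta_i^H H_i(x^k)$ is either strictly positive (when its multiplier is nonzero) or exactly zero (when its multiplier vanishes), and since the multiplier vector is nonzero at least one term is strictly positive; hence the aggregate
\begin{equation*}
\sum_{i=1}^{m}\lambda_i g_i(x^k)+\sum_{i=1}^{p}\mu_i h_i(x^k)+\sum_{i=1}^{q}\eta_i^G G_i(x^k)-\sum_{i=1}^{q}\eta_i^H H_i(x^k)
\end{equation*}
is strictly positive for every $k$. This is exactly condition (iii) of MPVC-generalized pseudonormality, so the same multiplier and sequence violate pseudonormality, establishing the contrapositive.

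Combining the two implications, whichever hypothesis is assumed at the local minimizer $x^\ast$ yields MPVC-generalized quasinormality thereat, and Theorem~\ref{m stat} then delivers the enhanced M-stationarity conclusion. I do not anticipate any genuine obstacle: all the analytic content---the passage from a local minimum to the enhanced Fritz John system and the elimination of the objective multiplier $\alpha$---is already packaged in Theorem~\ref{thmefj-M} and Theorem~\ref{m stat}. The single point deserving a moment's care is the direction of the pseudonormality-to-quasinormality implication, which rests on the elementary observation that componentwise strict positivity of the nonzero-multiplier terms forces strict positivity of their sum.
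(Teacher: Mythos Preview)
Your proposal is correct and follows exactly the same approach as the paper: the paper states this corollary without a separate proof, relying on Theorem~\ref{m stat} together with the implication chain MPVC-GMFCQ $\Rightarrow$ MPVC-generalized pseudonormality $\Rightarrow$ MPVC-generalized quasinormality recorded just before. You have simply spelled out the two implications that the paper treats as obvious (or cites from \cite{hu}), and your justifications for both are sound.
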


 \begin{Theorem}
 	Suppose that $h_i$ are linear, $g_j$ are concave, $G_l, ~H_l$ are all linear. Then any feasible point of MPVC is MPVC-generalized pseudonormal.
 \end{Theorem}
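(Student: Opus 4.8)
The plan is to argue by contradiction. I would suppose that $x^\ast$ is feasible but \emph{not} MPVC-generalized pseudonormal, so there exists a nonzero multiplier $(\lambda,\mu,\eta^H,\eta^G)$ satisfying conditions (i), (ii) and (iii) of the definition of MPVC-generalized pseudonormality. The whole idea is to package the constraint functions into the single auxiliary function
\[
L(x) := \sum_{i=1}^{m}\lambda_i g_i(x) + \sum_{i=1}^{p}\mu_i h_i(x) + \sum_{i=1}^{q}\eta_i^G G_i(x) - \sum_{i=1}^{q}\eta_i^H H_i(x),
\]
and then to exploit the structural hypotheses (each $h_i$, $G_l$, $H_l$ affine and each $g_j$ concave) to contradict condition (iii).

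The first key step is to establish that $L$ is concave on $\mathbb{R}^n$. Here the crucial point is that condition (ii) forces $\lambda_i \geqslant 0$ for every $i$ (with $\lambda_i=0$ off $I_g(x^\ast)$), so each term $\lambda_i g_i$ is a nonnegative multiple of a concave function and is therefore concave; meanwhile $h_i,G_i,H_i$ are affine by hypothesis, so $\mu_i h_i$, $\eta_i^G G_i$ and $-\eta_i^H H_i$ are affine \emph{regardless} of the signs of their multipliers. Hence $L$ is a sum of concave and affine functions, and so is concave. The second step is to note that condition (i) says precisely $\nabla L(x^\ast)=0$. Since $L$ is differentiable and concave, a stationary point is a global maximizer, and therefore $L(x)\leqslant L(x^\ast)$ for all $x\in\mathbb{R}^n$.

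The third step, which I expect to be the most delicate bookkeeping, is to evaluate $L(x^\ast)$ and show it equals $0$. I would use feasibility together with each piece of the sign condition (ii) and the definitions of the index sets: $\lambda_i g_i(x^\ast)=0$ for every $i$ (either $\lambda_i=0$ or $g_i(x^\ast)=0$); $\mu_i h_i(x^\ast)=0$ since $h_i(x^\ast)=0$; $\eta_i^G G_i(x^\ast)=0$ because $\eta_i^G$ can be nonzero only on $I_{+0}(x^\ast)\cup I_{00}(x^\ast)$, where $G_i(x^\ast)=0$; and $\eta_i^H H_i(x^\ast)=0$ because $\eta_i^H$ can be nonzero only on $I_{00}(x^\ast)\cup I_{0+}(x^\ast)\cup I_{0-}(x^\ast)$, where $H_i(x^\ast)=0$. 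Combining $L(x^\ast)=0$ with the global-maximum inequality gives $L(x)\leqslant 0$ for all $x$.

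Finally, condition (iii) asserts the existence of a sequence $\{x^k\}\to x^\ast$ with $L(x^k)>0$ for every $k$, which directly contradicts $L(x)\leqslant 0$. This contradiction shows that no nonzero multiplier can satisfy (i)--(iii) simultaneously, so $x^\ast$ is MPVC-generalized pseudonormal; since the argument never used anything special about $x^\ast$ beyond feasibility, it applies to every feasible point of MPVC. The only genuine obstacle is the careful case-by-case verification that $L(x^\ast)=0$, which must invoke the vanishing of $G_i$ and $H_i$ on exactly the index sets where the corresponding multipliers are permitted to be nonzero.
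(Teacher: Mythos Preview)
Your proposal is correct and follows essentially the same approach as the paper: both argue by contradiction, use the first-order characterization of concavity (with $\lambda_i\geqslant 0$ ensuring the inequality direction survives) together with condition (i) to conclude $L(x)\leqslant L(x^\ast)$, verify $L(x^\ast)=0$ via the index-set complementarity, and then contradict condition (iii). Your packaging via ``$L$ is concave and $\nabla L(x^\ast)=0$ implies $x^\ast$ is a global maximizer'' is slightly more abstract than the paper's explicit Taylor-style inequalities, but the content is identical.
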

 \begin{proof}
 We prove this result by contradiction. Suppose there is a feasible point $x^\ast$ that is not MPVC-generalized pseudonormal. Then there is a nonzero multiplier $(\lambda, \mu, \eta^G, \eta^H)$ such that
 \begin{equation}
 ~\sum_{j=1}^{m} \lambda_j \nabla g_j(x^\ast) + \sum_{i=1}^{p} \mu_i \nabla h_i (x^\ast) + \sum_{l=1}^{q} \eta^G_l \nabla G_l(x^\ast)
 -\sum_{l=1}^{q} \eta^H_l \nabla H_l(x^\ast)~=~0
 \label{lin 1}
 \end{equation}
 \begin{eqnarray*}
 {\rm and~~~}\lambda_j &\geqslant&0 ~~~~\forall~j \in I_g(x^\ast),~~~\lambda_j =0 ~~~~\forall~j \notin I_g(x^\ast),\\
 	\eta_l^G &=&0 ~~~~\forall~l \in I_{+-}(x^\ast) \cup I_{0-}(x^\ast) \cup I_{0+}(x^\ast),~~~\eta_l^G  \geqslant 0 ~~~~\forall~l \in I_{00}(x^\ast) \cup I_{+0}(x^\ast),\\	
 \eta_l^H &=& 0 ~~~~\forall~l \in I_+(x^\ast),~~~ \eta_l^H  \geqslant 0 ~~~~\forall~l \in I_{0-}(x^\ast),~{\rm and}~~~\eta_l^H ~{\rm is} ~{\rm free}~\forall~l \in I_{0+}(x^\ast),\\
 \eta_l^G \eta_l^H & = &0  ~~~~\forall~l \in I_{00}(x^\ast),
 \end{eqnarray*}
 and there is a sequence $\{x^k\} \rightarrow x^\ast$ such that the following is true for all $k \in \mathbb{N}$
 \begin{equation}
 \sum_{j=1}^{m} \lambda_j g_j(x^k)~+~\sum_{i=1}^{p} \mu_i h_i(x^k)~+~\sum_{l=1}^{q} \eta_l^G G_l(x^k)~-~\sum_{l=1}^{q} \eta_l^H H_l(x^k)~>~0.
 \label{lin 2}
 \end{equation}
 Now by the linearity of $h_i, G_l, H_l$ and by the concavity of $g_j$, we have  for all $x \in \mathbb{R}^n$
 \begin{eqnarray*}
 	h_i(x)&=& h_i(x^\ast) + \nabla h_i(x^\ast)^T (x- x^\ast)~~ \forall~  i = 1,...,p,\\
 	G_l(x)&=& G_l(x^\ast) + \nabla G_l(x^\ast)^T (x- x^\ast)~~ \forall~  l = 1,...,q,\\
 	H_l(x)&=& H_l(x^\ast) + \nabla H_l(x^\ast)^T (x- x^\ast)~~\forall ~ l = 1,...,q,\\
 	g_j(x)&\leqslant& g_j(x^\ast) + \nabla g_j^T (x^\ast) (x- x^\ast)~~\forall ~ j=1,...,m. 	
 \end{eqnarray*}
 By multiplying these relations with $\mu_i, \eta_l^G, \eta_l^H$ and $\lambda_j$ and adding over $i, l$ and $j$ respectively, we obtain $\forall ~ x \in \mathbb{R}^n$,
 \begin{eqnarray*}
 &&\sum_{j=1}^{m} \lambda_j \nabla g_j(x) + \sum_{i=1}^{p} \mu_i \nabla h_i (x) + \sum_{l=1}^{q} \eta^G_l \nabla G_l(x)
 -\sum_{l=1}^{q} \eta^H_l \nabla H_l(x)\\
 &\leqslant&~\sum_{j=1}^{m} \lambda_j \nabla g_j(x^\ast) + \sum_{i=1}^{p} \mu_i \nabla h_i (x^\ast) + \sum_{l=1}^{q} \eta^G_l \nabla G_l(x^\ast)
 -\sum_{l=1}^{q} \eta^H_l \nabla H_l(x^\ast) \\
 &&+\left [\sum_{j=1}^{m} \lambda_j \nabla g_j^T(x^\ast) + \sum_{i=1}^{p} \mu_i \nabla h_i(x^\ast)^T + \sum_{l=1}^{q} \eta_l^G \nabla G_l(x^\ast)^T - \sum_{l=1}^{q} \eta_l^H \nabla H_l(x^\ast)^T \right ](x-x^\ast)\\
 &=& \left [\sum_{j=1}^{m} \lambda_j \nabla g_j(x^\ast) + \sum_{i=1}^{p} \mu_i \nabla h_i(x^\ast) + \sum_{l=1}^{q} \eta_l^G \nabla G_l(x^\ast) - \sum_{l=1}^{q} \eta_l^H \nabla H_l(x^\ast) \right ]^T(x-x^\ast)
 \end{eqnarray*}
 the last inequality holds, because we have
 \begin{eqnarray*}
 \mu_i h_i(x^\ast) = 0 ~~~\forall~i &{\rm and}& ~~\sum_{j=1}^{m} \lambda_j g_j(x^\ast)=0,\\
 \sum_{l=1}^{q} \eta_l^G G_l(x^\ast) = 0, && \sum_{l=1}^{q} \eta_l^H H_l(x^\ast) = 0.	
 \end{eqnarray*}
 Now by the condition (\ref{lin 1}), we have
 \begin{equation*}
 	\sum_{j=1}^{m} \lambda_j \nabla g_j(x) + \sum_{i=1}^{p} \mu_i \nabla h_i (x) + \sum_{l=1}^{q} \eta^G_l \nabla G_l(x)
 	-\sum_{l=1}^{q} \eta^H_l \nabla H_l(x)~\leqslant~0 ~~\forall~x \in \mathbb{R}^n.
 \end{equation*}
 But, it contradicts condition (\ref{lin 2}), hence $x^\ast$ is MPVC-generalized pseudonormal.
 \end{proof}
 In \cite{qi},  Qi and Wei introduced $\mathit{Constant ~Positive~ Linear~ Dependence}$ (or CPLD) for standard nonlinear programs. Hoheisel et al. \cite{hohei con} introduced MPVC-CPLD for MPVC, which is weaker than MPVC-MFCQ. Further, it is  also generalized by Hoheisel et al.  in \cite{hoheisel 1} for MPEC, and later it has been employed to analyze these problems \cite{Kanzow 2,ye}.
  \begin{Definition}
  	\textbf{MPVC-CPLD:} A feasible point  $x^\ast$ is said to satisfy MPVC-CPLD if and only if for any indices set $I_h \subset \mathcal{B} = \{1,...,p\},~~J_0 \subset I_g(x^\ast), ~~L_0^H \subset I_{0+}(x^\ast) \cup I_{00}(x^\ast) \cup I_{0-}(x^\ast),~~L_0^G \subset I_{+0}(x^\ast) \cup I_{00}(x^\ast) $, whenever there exist $\lambda_j  ~\geqslant ~0 ~\forall ~j \in J_0, \mu_i, \eta_l^H$ and $\eta^G_l$ not all zero such that
  	\begin{equation*}
  	\sum_{j \in J_0}^{} \lambda_j \nabla g_j(x^\ast) + \sum_{i \in I_h}^{} \mu_i \nabla h_i (x^\ast) + \sum_{l \in L_0^G}^{} \eta^G_l \nabla G_l(x^\ast)
  	-\sum_{l \in L_0^H}^{} \eta^H_l \nabla H_l(x^\ast) ~=~0
  	\end{equation*}
  	and $\eta_l^G \eta_l^H = 0,~\forall~l \in I_{00}(x^\ast)$, then there is a neighbourhood $U(x^\ast)$ of $x^\ast$ such that for any $x \in U(x^\ast)$, the vectors
  	\begin{equation*}
  	\{\nabla h_i(x) | i \in I_h \}, ~\{\nabla g_j(x) | j \in J_0 \}, ~\{\nabla G_l(x) | l \in L_0^G \},~ \{\nabla H_l(x) | l \in L_0^H \}
  	\end{equation*}
  	are linearly dependent.
  \end{Definition}
  The following Lemma from \cite[Lemma 1]{andreani} is crucial to prove our next result.
  \begin{Lemma}
  	
  	If $x = \sum_{i=1}^{m+p} \alpha_i v_i$ with $v_i \in \mathbb{R}^n$ for every $i$, $\{v_i\}_{i=1}^m$ is linearly independent and $\alpha_i \neq 0$ for every $i = m+1,...,m+p$, then there exist $J \subset \{m+1,..., m+p\}$ and scalars $\bar{\alpha}_i$ for every $i \in \{1,...,m\} \cup J$ such that\\
  	${\rm (i)} ~~~ x = \sum_{i \in \{1,...,m\} \cup J}^{} \bar{\alpha}_i v_i$;\\
  	${\rm (ii)} ~~~ \alpha_i \bar{\alpha}_i > 0$ for every $i \in J$; \\
  	${\rm (iii)} ~~~ \{v_i\}_{i \in \{1,...,m\} \cup J}$ is linearly independent.
  	\label{lemma cpld}
  \end{Lemma}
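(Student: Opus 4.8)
The plan is to argue by induction on the number $p$ of ``extra'' vectors $v_{m+1},\dots,v_{m+p}$, using a Carath\'eodory-type elimination step in which I discard one extra vector at a time while never disturbing the sign of the surviving extra coefficients. The structural point that makes this work is that every elimination removes an index $i>m$ only, so the linearly independent block $\{v_i\}_{i=1}^m$ persists at every stage; its coefficients may be altered freely, whereas the coefficients attached to the indices $i>m$ must retain the sign of the original $\alpha_i$.

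The base case $p=0$ is immediate: $x=\sum_{i=1}^m \alpha_i v_i$ with $\{v_i\}_{i=1}^m$ independent, so $J=\emptyset$ works. More generally, if the whole family $\{v_i\}_{i=1}^{m+p}$ happens to be linearly independent, I take $J=\{m+1,\dots,m+p\}$ and $\bar\alpha_i=\alpha_i$; then (i) is the hypothesis, (iii) is the assumed independence, and (ii) holds because $\alpha_i\bar\alpha_i=\alpha_i^2>0$ for $i\in J$.

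For the inductive step I assume the family is dependent, so there is a nontrivial relation $\sum_{i=1}^{m+p}\beta_i v_i=0$. Since $\{v_i\}_{i=1}^m$ is independent, the set $K:=\{i>m:\beta_i\neq 0\}$ is nonempty. For any $t$ I may write $x=\sum_{i=1}^{m+p}(\alpha_i-t\beta_i)v_i$, and I choose $t$ so that at least one coefficient with index in $K$ vanishes while no coefficient with $i>m$ changes sign, i.e. $\alpha_i(\alpha_i-t\beta_i)\geq 0$. Writing $\alpha_i-t\beta_i=\beta_i(t_i-t)$ with $t_i:=\alpha_i/\beta_i\neq 0$ for $i\in K$, the sign condition becomes $t_i(t_i-t)\geq 0$. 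If some $t_i>0$, then $t:=\min\{t_i:i\in K,\ t_i>0\}$ satisfies every such constraint and annihilates the minimizing coefficient; if all $t_i<0$, then $t:=\max\{t_i:i\in K\}$ does the same. Either way I delete at least one index $i>m$, keep $\{v_i\}_{i\le m}$ intact, and the surviving extra coefficients $\alpha_i':=\alpha_i-t\beta_i$ satisfy $\alpha_i\alpha_i'>0$ (in particular $\alpha_i'\neq 0$). Applying the induction hypothesis to this reduced representation with fewer extra vectors yields a set $J$ and scalars $\bar\alpha_i$ satisfying (i) and (iii) together with $\alpha_i'\bar\alpha_i>0$ for $i\in J$; combining $\alpha_i'\bar\alpha_i>0$ with $\alpha_i\alpha_i'>0$ forces $\alpha_i$, $\alpha_i'$, $\bar\alpha_i$ to share a common sign, which gives (ii).

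The step I expect to be the main obstacle is the selection of $t$: one must check that the chosen extremal ratio simultaneously kills a coefficient indexed by $K$ and preserves $t_i(t_i-t)\geq 0$ for all of $K$ at once, and, crucially, that the inequality is \emph{strict} for the retained indices so that those coefficients remain nonzero and the sign propagates through the induction. The two auxiliary facts one needs along the way are that $K\neq\emptyset$ (which rests entirely on the independence of $\{v_i\}_{i=1}^m$) and the sign transitivity just described; once these are in place, termination after at most $p$ eliminations and the persistence of the independent block $\{v_i\}_{i=1}^m$ are automatic.
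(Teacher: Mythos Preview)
Your argument is correct. The paper does not prove this lemma at all; it merely quotes it from \cite[Lemma 1]{andreani} and uses it as a black box in the proof of Theorem~4.5. Your Carath\'eodory-type elimination, inducting on the number of extra vectors and choosing $t$ as the extremal ratio among the $t_i=\alpha_i/\beta_i$ so that one extra coefficient is killed while the survivors strictly keep their sign, is the standard way to establish this result and is exactly the argument given in the cited reference. The two potential pitfalls you flag---nonemptiness of $K$ (forced by independence of $\{v_i\}_{i\le m}$) and strictness of $t_i(t_i-t)>0$ for the retained indices so that the sign information survives the induction---are both handled correctly by your choice of $t$: indices achieving the extremal ratio are deleted, and every other $i\in K$ has $t_i\neq t$, whence the product is strictly positive; indices $i>m$ with $\beta_i=0$ are untouched. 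Nothing is missing.
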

  Now, using the definition of MPVC-CPLD and above Lemma, we have the following important result.
 \begin{Theorem}
 	Let $x$ be a feasible solution of MPVC such that MPVC-CPLD holds. Then $x$ is an MPVC-generalized quasinormal point.
 \end{Theorem}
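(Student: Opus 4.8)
The plan is to proceed by contradiction, combining the Carathéodory-type selection of Lemma \ref{lemma cpld} with the neighbourhood linear dependence forced by MPVC-CPLD. Assume MPVC-CPLD holds at the feasible point (write $x^\ast$ for it) but that MPVC-generalized quasinormality fails. Then there is a nonzero multiplier $(\lambda,\mu,\eta^H,\eta^G)$ obeying conditions (i)--(iii) of that definition together with a sequence $\{x^k\}\to x^\ast$ realizing the componentwise sign conditions in (iii). Writing the supports $I_h=\{i:\mu_i\neq0\}$, $J_0=\{j:\lambda_j>0\}$, $L_0^G=\{l:\eta^G_l\neq0\}\subseteq I_{+0}\cup I_{00}$ and $L_0^H=\{l:\eta^H_l\neq0\}\subseteq I_{0-}\cup I_{0+}\cup I_{00}$, I note these are exactly admissible index sets for MPVC-CPLD, and that the relation (i) makes the associated gradients linearly dependent at $x^\ast$. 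Among all such bad multipliers I would first fix one whose number of nonzero components is smallest.

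First I would sort the nonzero multipliers into the \emph{sign-constrained} group ($\lambda_j$, $\eta^G_l$, and $\eta^H_l$ for $l\in I_{0-}$, each positive on its support) and the \emph{free} group ($\mu_i$ and $\eta^H_l$ for $l\in I_{0+}\cup I_{00}$). After absorbing linearly redundant free gradients into a linearly independent base, I would apply Lemma \ref{lemma cpld} to the dependence (i), taking this base as $\{v_i\}_{i=1}^m$ and the sign-constrained gradients as the remaining $v_i$. The lemma returns a subrelation in which the retained sign-constrained coefficients keep their (positive) signs and in which exactly one distinguished gradient is the \emph{unique} combination of the others, the others being linearly independent at $x^\ast$. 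Crucially the support of this subrelation is contained in that of the original multiplier, so the complementarity $\eta^G_l\eta^H_l=0$ on $I_{00}$ and the remaining restrictions in (ii) are inherited.

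Next I would feed this reduced index set into MPVC-CPLD: the whole reduced family of gradients stays linearly dependent throughout a neighbourhood $U(x^\ast)$, hence at $x^k$ for all large $k$, while the subfamily obtained by deleting the distinguished gradient stays linearly independent near $x^\ast$ by continuity. Minimal dependence then makes the dependence relation unique up to scale on $U(x^\ast)$, with continuously varying coefficients whose signs agree with those at $x^\ast$; in particular the positive multipliers of the sign-constrained constraints remain positive along $U(x^\ast)$. Evaluating this pointwise identity along the segment from $x^\ast$ to $x^k$ and using the fundamental theorem of calculus, the leading contribution becomes $\sum \lambda_j(x^\ast)\,g_j(x^k)+\sum\mu_i(x^\ast)\,h_i(x^k)+\sum\eta^G_l(x^\ast)\,G_l(x^k)-\sum\eta^H_l(x^\ast)\,H_l(x^k)$, which is \emph{strictly positive} because every term is positive by the sign conditions of (iii). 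Since the identity integrates to zero, this is the contradiction, and therefore $x^\ast$ must be MPVC-generalized quasinormal.

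I expect two places to carry the real difficulty. The first is the bookkeeping of the MPVC sign pattern: the free multipliers $\mu_i$ and $\eta^H_l$ on $I_{0+}\cup I_{00}$ must be handled so that condition (iii), which controls the signs of $\mu_i h_i(x^k)$ and $\eta^H_l H_l(x^k)$ through $\mu_i$ and $\eta^H_l$ themselves, is not destroyed by the Carathéodory step, and the complementarity $\eta^G_l\eta^H_l=0$ on the biactive set $I_{00}$ must survive both the reduction and the passage to $U(x^\ast)$; recording the equalities and the free vanishing terms as one-sided conditions is the device I would use here. The second, and genuinely delicate, point is the analytic finish: a single dependence at $x^\ast$ is too weak for the componentwise sequence condition of quasinormality, so one must upgrade it, via MPVC-CPLD, to a family of sign-consistent dependences on $U(x^\ast)$ and then control the error coming from the variation of the multipliers $\lambda_j(x),\mu_i(x),\eta^G_l(x),\eta^H_l(x)$ along the segment so that the strictly positive leading term indeed dominates.
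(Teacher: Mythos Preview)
Your overall architecture matches the paper's: contradict quasinormality, use Lemma~\ref{lemma cpld} to extract a minimal dependence with controlled signs, invoke MPVC-CPLD to propagate linear dependence to a neighbourhood, and finish analytically. The two difficulties you flag are the right ones. However, in both places the device you propose does not actually close the gap.

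\medskip
\textbf{The Carath\'eodory bookkeeping.} You place the \emph{free} gradients (those for $h_i$ and for $H_l$ with $l\in I_{0+}\cup I_{00}$) into the linearly independent base $\{v_i\}_{i=1}^m$ of Lemma~\ref{lemma cpld}. But that lemma gives \emph{no sign control} on the new coefficients $\bar\alpha_i$ for $i\in\{1,\dots,m\}$; only the coefficients indexed by $J\subset\{m+1,\dots,m+p\}$ keep their sign. Hence after the reduction your $\bar\mu_i$ or $\bar\eta^H_l$ may flip sign, and then the term $\bar\mu_i\,h_i(x^k)$ or $-\bar\eta^H_l\,H_l(x^k)$ need no longer be positive along the \emph{original} sequence $\{x^k\}$: condition~(iii) is stated for the original multiplier, not the reduced one. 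The paper avoids this by taking $m=0$ in Lemma~\ref{lemma cpld}: it singles out one index $l_1$, writes $-\eta^H_{l_1}\nabla H_{l_1}(x^\ast)$ as a combination of the \emph{remaining} nonzero-coefficient gradients, and applies the lemma with \emph{all} of those as the second group, so every surviving coefficient keeps its sign. Your minimal-support assumption does not rescue this, because a sign-flipped reduced multiplier need not itself satisfy~(iii), so it is not a ``bad multiplier'' and minimality does not bite.

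\medskip
\textbf{The analytic finish.} Integrating the pointwise dependence $\sum_i c_i(y)\,\nabla\phi_i(y)=0$ along the segment from $x^\ast$ to $x^k$ yields only
\[
\sum_i c_i(x^\ast)\,\phi_i(x^k)\;=\;o\bigl(\|x^k-x^\ast\|\bigr),
\]
since the error from the variation of the $c_i$ is $o(\|x^k-x^\ast\|)$. That the left-hand side is strictly positive is \emph{not} a contradiction: each $\phi_i(x^k)$ is itself only $O(\|x^k-x^\ast\|)$ and may well be $o(\|x^k-x^\ast\|)$ (take $x^k-x^\ast$ nearly orthogonal to all the relevant gradients). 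The paper's finish is different and stronger: after the reduction one has $\{\nabla\phi_i\}_{i\neq l_1}$ linearly independent and, by CPLD, $\nabla\phi_{l_1}$ in their span throughout a neighbourhood. By \cite[Lemma~3.2]{andreani 2} this upgrades to a \emph{functional} relation $\phi_{l_1}(y)=\psi\bigl(\phi_i(y):\,i\neq l_1\bigr)$ with $\psi$ smooth, $\psi(0)=0$ and $\nabla\psi(0)=(\tilde c_i)_{i\neq l_1}$. Taylor expansion of $\psi$ then gives
\[
\phi_{l_1}(x^k)\;=\;\sum_{i\neq l_1}\tilde c_i\,\phi_i(x^k)\;+\;o\!\left(\bigl\|(\phi_i(x^k))_{i\neq l_1}\bigr\|\right),
\]
an error in terms of the \emph{function values} rather than $\|x^k-x^\ast\|$. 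Since every term $\tilde c_i\,\phi_i(x^k)$ on the right has the same sign, the linear part dominates the remainder, forcing the sign of $\phi_{l_1}(x^k)$ to be opposite to what condition~(iii) demands for the index $l_1$. Your integration argument cannot reproduce this, because it never produces an \emph{exact} identity among the $\phi_i$; you should either invoke \cite[Lemma~3.2]{andreani 2} (equivalently, a constant-rank/implicit-function argument) or otherwise obtain the remainder in terms of $\|(\phi_i(x^k))\|$ rather than $\|x^k-x^\ast\|$.
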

 \begin{proof}
 Here, we deal only with vanishing constraints. Assume that $x$ is a feasible point and the MPVC-CPLD condition holds at $x$.\\
 \hspace*{0.5 cm} If $x$ satisfies MPVC-GMFCQ, then MPVC-generalized quasinormality obviously holds.\\
 Suppose, MPVC-GMFCQ does not hold, then there is a nonzero vector $(\eta^G, \eta^H) \in \mathbb{R}^q \times \mathbb{R}^q$ such that
 \begin{equation*}
 	- \sum_{l=1}^{q} \eta_l^H \nabla H_l(x) + \sum_{l=1}^{q} \eta_l^G \nabla G_l(x) ~=~0
 \end{equation*}
 and
 $ \eta^H_l = 0~\forall~ l \in I_+ (x), ~~\eta^G_l = 0~\forall~l \in I_{+-}(x) \cup I_{0-}(x) \cup I_{0+}(x)$, $\eta^H_l \geqslant 0~\forall~ l \in I_{0-}(x)$, and $\eta^H_l$ is free $\forall ~l \in I_{0+}(x) \cup I_{00}(x)$, $\eta^G_l \geqslant 0~\forall ~l \in I_{+0}(x) \cup I_{00}(x)$ and  $ \eta^G_l \eta^H_l = 0 ~\forall~l \in I_{00}(x).$\\\\
 Now we define the index sets
 \begin{eqnarray*}
 	A_+^H (x) &:=&\{ l \in I_{0-}(x) \cup I_{0+}(x) \cup I_{00}(x) | \eta_l^H > 0 \},\\
 	A_-^H (x) &:=&\{ l \in I_{0+}(x) \cup I_{00}(x) | \eta_l^H < 0 \},\\
 	A_+^G (x) &:=&\{ l \in I_{+0}(x)  \cup I_{00}(x) | \eta_l^G > 0 \},\\
 	I_{00}^{+0} (x) &:=&\{ l \in I_{00}(x) | \eta_l^H > 0, \eta^G_l = 0 \},\\
 	I_{00}^{-0} (x) &:=&\{ l \in I_{00}(x) | \eta_l^H < 0, \eta^G_l = 0 \},\\
 	I_{00}^{0+} (x) &:=&\{ l \in I_{00}(x) | \eta_l^H = 0, \eta^G_l > 0 \},\\
 	I_{00}^{0-} (x) &:=&\{ l \in I_{00}(x) | \eta_l^H = 0, \eta^G_l < 0 \}	.	
 \end{eqnarray*}
 Since $(\eta^H, \eta^G)$ is nonzero vector, therefore the union of the above sets must be nonempty and we may write
 \begin{eqnarray*}
 	0 &=&  -\left[ \sum_{l \in A^H_+(x)}^{} \eta_l^H \nabla H_l(x) + \sum_{l \in A_H^-(x)}^{} \eta_l^H \nabla H_l(x) \right] + \sum_{l \in A_+^G(x)}^{} \eta_l^G \nabla G_l(x)\\
 	&& - \left[ \sum_{l \in I_{00}^{+0}(x)}^{} \eta_l^H \nabla H_l(x) + \sum_{l \in I_{00}^{-0}(x)}^{} \eta_l^H \nabla H_l(x) \right] + \left[ \sum_{l \in I_{00}^{0+}(x)}^{} \eta_l^G \nabla G_l(x) + \sum_{l \in I_{00}^{0-}(x)}^{} \eta_l^G \nabla G_l(x) \right].
 \end{eqnarray*}
 First we assume that $A^H_+(x)$ is nonempty. Let $l_1 \in A^H_+(x)$, then
 \begin{eqnarray*}
 -\eta_{l_1}^H \nabla H_{l_1}(x) &=&  \left[ \sum_{l \in A^H_+(x)/ \{l_1\}}^{} \eta_l^H \nabla H_l(x) + \sum_{l \in A_H^-(x)}^{} \eta_l^H \nabla H_l(x) \right] - \sum_{l \in A_+^G(x)}^{} \eta_l^G \nabla G_l(x) \\
 &&+ \left[ \sum_{l \in I_{00}^{+0}(x)}^{} \eta_l^H \nabla H_l(x) + \sum_{l \in I_{00}^{-0}(x)}^{} \eta_l^H \nabla H_l(x) \right] - \left[ \sum_{l \in I_{00}^{0+}(x)}^{} \eta_l^G \nabla G_l(x) + \sum_{l \in I_{00}^{0-}(x)}^{} \eta_l^G \nabla G_l(x) \right].
 \end{eqnarray*}
 If $\nabla G_{l_1}(x) = 0$, then $\{\nabla G_{l_1}(x)\}$ is linearly dependent. Then, by MPVC-CPLD, set $\{\nabla G_{l_1}(y)\}$ must be linearly dependent for all $y$ in some neighbourhood of $x$. Therefore $\nabla G_{l_1}(y) = 0$ for all $y$ in an open neighbourhood of $x$. Since $G_{l_1}(x) = 0 ~ \Rightarrow ~ G_{l_1}(y) = 0$ for all $y$ in the neighbourhood of $x$. Hence, for any sequence $x^k \rightarrow x , ~G_{l_1}(x^k) = 0 $ for all sufficiently large $k$ always holds, i.e for sequence $x^k \rightarrow x$, $- \eta_l^H H_l(x^k) > 0$ never holds. Therefore, MPVC-generalized quasinormality holds at $x$.\\
 \hspace*{0.5 cm} Now, if $\nabla G_{l_1} \neq 0$, then $\nabla G_{l_1}(x)$ is linearly independent and then by lemma \ref{lemma cpld} there exists index sets
 \begin{eqnarray*}
 &&\tilde{A}^H_+(x) \subset A^H_+(x) / \{l_1\}, ~~~ \tilde{A}^H_-(x) \subset A^H_-(x) , ~~~ \tilde{A}^G_+(x) \subset A^G_+(x), ~~~ \tilde{I}_{00}^{+0}(x) \subset I_{00}^{+0}(x), \\
 && \tilde{I}_{00}^{-0}(x) \subset I_{00}^{-0}(x), ~~~
 \tilde{I}_{00}^{0+}(x) \subset I_{00}^{0+}(x), ~~~\tilde{I}_{00}^{0-}(x) \subset I_{00}^{0-}(x)
 \end{eqnarray*}
 such that the vectors
 \begin{eqnarray*}
 &&\{ \nabla H_l(x)\}_{l \in \tilde{A}_+^H(x)}, ~~~\{ \nabla H_l(x)\}_{l \in \tilde{A}_-^H(x)}, ~~~\{ \nabla G_l(x)\}_{l \in \tilde{A}_+^G(x)}, ~~~ \{ \nabla H_l(x)\}_{l \in \tilde{I}_{00}^{+0}(x)},\\
 &&\{ \nabla H_l(x)\}_{l \in \tilde{I}_{00}^{-0}(x)},~~~\{ \nabla G_l(x)\}_{l \in \tilde{I}_{00}^{0+}(x)},~~~\{ \nabla G_l(x)\}_{l \in \tilde{I}_{00}^{0-}(x)}
 \end{eqnarray*}
  are linearly dependent and
    \begin{eqnarray*}
  	-\eta_{l_1}^H \nabla H_{l_1}(x) &=&  \left[ \sum_{l \in \tilde{A}^H_+(x) }^{} \tilde{\eta}_l^H \nabla H_l(x) + \sum_{l \in \tilde{A}_H^-(x)}^{} \tilde{\eta}_l^H \nabla H_l(x) \right] - \sum_{l \in \tilde{A}_+^G(x)}^{} \tilde{\eta}_l^G \nabla G_l(x) \\
  	&&+ \left[ \sum_{l \in \tilde{I}_{00}^{+0}(x)}^{} \tilde{\eta}_l^H \nabla H_l(x) + \sum_{l \in \tilde{I}_{00}^{-0}(x)}^{} \tilde{\eta}_l^H \nabla H_l(x) \right] - \left[ \sum_{l \in \tilde{I}_{00}^{0+}(x)}^{} \tilde{\eta}_l^G \nabla G_l(x) + \sum_{l \in \tilde{I}_{00}^{0-}(x)}^{} \tilde{\eta}_l^G \nabla G_l(x) \right]
  \end{eqnarray*}

 with $\tilde{\eta}_l^H > 0  ~\forall ~l \in \tilde{A}_+^H(x),~~ \tilde{\eta}_l^H < 0  ~\forall ~l \in \tilde{A}_-^H(x),~~ \tilde{\eta}_l^G > 0  ~\forall ~l \in \tilde{A}_+^G(x),~~ \tilde{\eta}_l^H > 0,  ~\tilde{\eta}_l^G = 0 ~\forall ~l \in \tilde{I}_{00}^{+0}(x),~~ \tilde{\eta}_l^H < 0,  ~\tilde{\eta}_l^G = 0 ~\forall ~l \in \tilde{I}_{00}^{-0}(x), ~~\tilde{\eta}_l^H = 0,  ~\tilde{\eta}_l^G > 0 ~\forall ~l \in \tilde{I}_{00}^{0+}(x), ~~\tilde{\eta}_l^H = 0,  ~\tilde{\eta}_l^G < 0 ~\forall ~l \in \tilde{I}_{00}^{0-}(x) $.\\

 Now by the linear independence of the vectors and by the continuity argument, the vectors \\

  $\{ \nabla H_l(y)\}_{l \in \tilde{A}_+^H(x)}, ~~~\{ \nabla H_l(y)\}_{l \in \tilde{A}_-^H(x)}, ~~~\{ \nabla G_l(y)\}_{l \in \tilde{A}_+^G(x)}, ~~~ \{ \nabla H_l(y)\}_{l \in \tilde{I}_{00}^{+0}(x)}, \\
  \{ \nabla H_l(y)\}_{l \in \tilde{I}_{00}^{-0}(x)},~~~\{ \nabla G_l(y)\}_{l \in \tilde{I}_{00}^{0+}(x)},~~~\{ \nabla G_l(y)\}_{l \in \tilde{I}_{00}^{0-}(x)}$\\

  are linearly independent for all $y$ in a neighbourhood of $x$ and by the MPVC-CPLD assumption the vectors \\\\
  $ \eta_{l_1}^H \nabla H_{l_1}(y), ~~~ \{ \nabla H_l(y)\}_{l \in \tilde{A}_+^H(x)}, ~~~\{ \nabla H_l(y)\}_{l \in \tilde{A}_-^H(x)}, ~~~\{ \nabla G_l(y)\}_{l \in \tilde{A}_+^G(x)}, ~~~ \{ \nabla H_l(y)\}_{l \in \tilde{I}_{00}^{+0}(x)}, \\
  \{ \nabla H_l(y)\}_{l \in \tilde{I}_{00}^{-0}(x)},~~~\{ \nabla G_l(y)\}_{l \in \tilde{I}_{00}^{0+}(x)},~~~\{ \nabla G_l(y)\}_{l \in \tilde{I}_{00}^{0-}(x)}$\\\\
  are linearly dependent for all $y$ in a neighbourhood of $x$. Hence, $\eta_{l_1}^H \nabla H_{l_1}(y)$ must be a linear combination of all the remaining vectors for all $y$ in the neighbourhood of $x$.\\
  Now by \cite[Lemma 3.2]{andreani 2}, there exist a smooth function $\psi$ defined in a neighbourhood of $(0,...0)$ such that, for all $y$ in the neighbourhood of $x$,
  \begin{eqnarray*}
  -\eta_{l_1}^{H} \nabla H_{l_1}(y) &=& \psi \Big(\{ \nabla H_l(y)\}_{l \in \tilde{A}_+^H(x)}, ~\{ \nabla H_l(y)\}_{l \in \tilde{A}_-^H(x)}, ~\{ \nabla G_l(y)\}_{l \in \tilde{A}_+^G(x)}, ~ \{ \nabla H_l(y)\}_{l \in \tilde{I}_{00}^{+0}(x)}, \\
  && ~~\{ \nabla H_l(y)\}_{l \in \tilde{I}_{00}^{-0}(x)},~\{ \nabla G_l(y)\}_{l \in \tilde{I}_{00}^{0+}(x)},~\{ \nabla G_l(y)\}_{l \in \tilde{I}_{00}^{0-}(x)}\Big)\\
   \end{eqnarray*}
  and
  \begin{eqnarray*}
  \nabla \psi (0,...,0) &=& \Big( \{ \tilde{\eta}^H_l\}_{l \in \tilde{A}_+^H(x)}, ~\{ \tilde{\eta}^H_l\}_{l \in \tilde{A}_-^H(x)}, ~\{ \tilde{\eta}^G_l\}_{l \in \tilde{A}_+^G(x)}, ~ \{ \tilde{\eta}^H_l\}_{l \in \tilde{I}_{00}^{+0}(x)}, \\
  && ~~\{ \tilde{\eta}^H_l\}_{l \in \tilde{I}_{00}^{-0}(x)},~\{ \tilde{\eta}^G_l\}_{l \in \tilde{I}_{00}^{0+}(x)},~\{ \tilde{\eta}^G_l\}_{l \in \tilde{I}_{00}^{0-}(x)}\Big).
  \end{eqnarray*}
  Now, suppose $\{x^k\}$ is an infeasible sequence that converges to $x$ and such that
  \begin{eqnarray*}
  	H_l(x^k) ~<~0~~ &\forall& l \in \tilde{A}^H_+(x),\\
  	H_l(x^k) ~>~0 ~~&\forall& l \in \tilde{A}^H_-(x),\\
  	G_l(x^k) ~<~0 ~~&\forall& l \in \tilde{A}^G_+(x),\\
  	H_l(x^k) ~<~0 ~~&\forall& l \in \tilde{I}_{00}^{+0}(x),\\
  	H_l(x^k) ~>~0 ~~&\forall& l \in \tilde{I}_{00}^{-0}(x),\\
  	G_l(x^k) ~<~0 ~~&\forall& l \in \tilde{I}_{00}^{0+}(x),\\
  	G_l(x^k) ~>~0 ~~&\forall& l \in \tilde{I}_{00}^{0-}(x).
  \end{eqnarray*}
  Now by Taylor's expansion of $\psi$ at $(0,...,0)$, we have for $-\eta_{l_1}^H \nabla H_{l_1}(x^k)$ by the above sequence \\
  $-\eta_{l_1}^H \nabla H_{l_1}(x^k) = \psi(0,...,0) + \Big \langle \big( \{ \nabla H_l(x^k)\}_{l \in \tilde{A}_+^H(x)},..., \{ \nabla G_l(x^k)\}_{l \in \tilde{I}_{00}^{0-}(x)} \big),\\
  ~~~~~~~~~~~~~~~~~~~~~~~~~~~~~~~~~~~~~~~~~~~~\big(\{\tilde{\eta}_l^H\}_{l \in \tilde{A}_+^H(x)} ,..., \{\tilde{\eta}_l^G\}_{l \in \tilde{I}_{00}^{0-}(x)}\big) \Big \rangle $\\

  $ ~~~~~~~~~~~~~~ = \psi(0,...,0) + \sum_{l \in \tilde{A}^H_+(x)}^{} H_l(x^k) \tilde{\eta}_l^H + \sum_{l \in \tilde{A}^H_-(x)}^{} H_l(x^k) \tilde{\eta}_l^H + \sum_{l \in \tilde{A}^G_+(x)}^{} G_l(x^k) \tilde{\eta}_l^G \\
  ~~~~~~~~~~~~~~~~~~~~~~+ \sum_{l \in \tilde{I}^{+0}_{00}(x)}^{} H_l(x^k) \tilde{\eta}_l^H  + \sum_{l \in \tilde{I}^{-0}_{00}(x)}^{} H_l(x^k) \tilde{\eta}_l^H + \sum_{l \in \tilde{I}^{0+}_{00}(x)}^{} G_l(x^k) \tilde{\eta}_l^G \\
 ~~~~~~~~~~~~~~~~~~~~~~ + \sum_{l \in \tilde{I}^{0-}_{00}(x)}^{} G_l(x^k) \tilde{\eta}_l^G $\\\\
then for all $k$ large enough, we must have $-\eta_{l_1}^H \nabla H_{l_1}(x^k) \leqslant 0$, therefore for the sequence $x^k \rightarrow x$, the inequality  $-\eta_{l_1}^H \nabla H_{l_1}(x^k) > 0 $ does not hold. The  proofs for the remaining cases are entirely similar to the above case.  Hence, MPVC-generalized quasinormality holds.
 \end{proof}
 Since we already have seen that enhanced M-stationarity is a consequence of MPVC-generalized quasinormality. So, we have the following.
 \begin{Corollary}
 	Let $x^\ast$ be a local minimizer of MPVC. If $x^\ast$ satisfies MPVC-CPLD, then $x^\ast$ is an enhanced M-stationary point.
 \end{Corollary}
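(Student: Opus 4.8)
The plan is to derive this corollary by composing two results already established, with no new computation required. The key observation is that a local minimizer is automatically a feasible point, so every hypothesis needed by the two input statements is available at the single point $x^\ast$, and the whole argument reduces to a two-step chaining.

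First I would invoke the immediately preceding Theorem, which states that any feasible point satisfying MPVC-CPLD is an MPVC-generalized quasinormal point. Applying it at $x^\ast$, which is feasible because it is a local minimizer, upgrades the standing MPVC-CPLD hypothesis into the assertion that $x^\ast$ satisfies MPVC-generalized quasinormality. This is the substantive transfer in the proof, but it is borrowed wholesale from the previous theorem.

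Second I would apply Theorem \ref{m stat}, which guarantees that a local minimizer of MPVC satisfying MPVC-generalized quasinormality is an enhanced M-stationary point. Feeding the output of the first step as the hypothesis of Theorem \ref{m stat}, both of whose requirements (local minimality and MPVC-generalized quasinormality at $x^\ast$) are now in hand, yields the desired conclusion directly.

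Since the statement is a pure corollary, there is no genuine obstacle in this proof itself: all the analytic difficulty was absorbed into the two inputs, namely the CPLD $\Rightarrow$ quasinormality theorem, whose proof relied on Lemma \ref{lemma cpld}, the continuity of the gradients, and the linear-dependence bookkeeping over the biactive index sets, and the enhanced Fritz John machinery of Theorem \ref{thmefj-M} underlying Theorem \ref{m stat}. The only point I would state explicitly is that the point at which quasinormality is certified and the minimizer coincide, so the two theorems compose at the same $x^\ast$; everything else is immediate.
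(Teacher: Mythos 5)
Your proposal is correct and matches the paper exactly: the paper derives this corollary by the same two-step chaining, combining the preceding theorem (MPVC-CPLD implies MPVC-generalized quasinormality at any feasible point) with Theorem \ref{m stat} (a local minimizer satisfying MPVC-generalized quasinormality is an enhanced M-stationary point). Your explicit remark that feasibility of $x^\ast$ follows from local minimality, so both results compose at the same point, is the only content needed and is consistent with the paper's treatment.
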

 \section{Local Error Bound}
 \label{leb}
 Since local error bound property is also a constraint qualification; hence  much attention has been paid in this context to standard nonlinear programs \cite{Pang97}, MPEC \cite{Kanzow1}, OPVIC \cite{ye 2} etc. Probably, for MPVC, \cite[Proposition 3.4]{hoheisel} is the first result on the existence of local error bound
 where it has been proved, but in context of a more general problem, that calmness of some perturbed map is equivalent to the existence of local error bounds. In \cite{Dussault}, the  local error bound result is derived under the MPVC-\emph{constant rank in the subspace of components} (MPVC-CRSC).  To find the relationship between MPVC-generalized quasinormality and MPVC-CRSC in \cite{Dussault}, it needs a separate discussion; we do not discuss it in this paper. We prove that the MPVC-generalized quasinormality is sufficient condition for the existence of local error bound in Theorem \ref{lebthm}.\\\\
 \hspace*{0.5 cm} In order to prove error bound result given in Theorem \ref{lebthm}, we collect some more results, which are necessary for the proof. The first result ensures the quasinormality in a whole neighbourhood.

  \begin{Lemma}
 	 \label{lemma a}
 If a feasible point $x^\ast$ is MPVC-generalized quasinormal, then all feasible points in a neighbourhood of $x^\ast$ are MPVC-generalized quasinormal.
\end{Lemma}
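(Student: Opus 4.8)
The plan is to argue by contradiction, using the standard normalisation-and-limit technique together with the observation that near a feasible point the MPVC index sets can only refine in a controlled manner. Suppose the assertion fails; then there is a sequence of feasible points $\{y^k\}$ with $y^k\to x^\ast$ such that each $y^k$ violates MPVC-generalized quasinormality. By definition, for every $k$ there is a multiplier $(\lambda^k,\mu^k,\eta^{H,k},\eta^{G,k})\neq 0$ satisfying conditions (i)--(iii) at $y^k$, together with an associated sequence $\{w^{k,j}\}_j\to y^k$ realising the sign pattern in (iii). I would normalise so that $\|(\lambda^k,\mu^k,\eta^{H,k},\eta^{G,k})\|=1$, and then pass to a subsequence along which (a) the multipliers converge to a unit vector $(\lambda,\mu,\eta^H,\eta^G)\neq 0$, and (b) the index-set classification of each index is constant, i.e. every $i$ lies in a fixed cell $I_\bullet(y^k)$ for all $k$ (possible since there are finitely many indices and cells). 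The aim is to show that this limit multiplier violates MPVC-generalized quasinormality at $x^\ast$, contradicting the hypothesis.

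First I would take limits in condition (i): continuity of the gradients and $y^k\to x^\ast$ give
\[
\sum_{i=1}^{m}\lambda_i\nabla g_i(x^\ast)+\sum_{i=1}^{p}\mu_i\nabla h_i(x^\ast)+\sum_{i=1}^{q}\eta_i^G\nabla G_i(x^\ast)-\sum_{i=1}^{q}\eta_i^H\nabla H_i(x^\ast)=0.
\]
The structural fact driving the argument is that, since $y^k$ is feasible (so $H_i(y^k)\geqslant 0$ and $G_i(y^k)H_i(y^k)\leqslant 0$) and close to $x^\ast$, one has $I_g(y^k)\subseteq I_g(x^\ast)$, each $i\in I_{+-}(x^\ast)$ stays in $I_{+-}$, each $i\in I_{0+}(x^\ast)$ stays in $I_{0+}$, each $i\in I_{+0}(x^\ast)$ migrates only into $I_{+0}$ or $I_{+-}$, each $i\in I_{0-}(x^\ast)$ only into $I_{0-}$ or $I_{+-}$, and each biactive $i\in I_{00}(x^\ast)$ only into one of $I_{00},I_{0+},I_{0-},I_{+0},I_{+-}$. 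Checking the sign restrictions (ii) of the limit cell by cell against these admissible migrations shows that each limit component inherits the correct sign at $x^\ast$.

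The only delicate cell is $I_{00}(x^\ast)$, where the limit must satisfy $\eta_i^G\geqslant 0$, $\eta_i^H$ free, and $\eta_i^H\eta_i^G=0$. If the $y^k$-type is $I_{00}$ or $I_{+0}$ then $\eta_i^{G,k}\geqslant 0$ and either $\eta_i^{H,k}\eta_i^{G,k}=0$ or $\eta_i^{H,k}=0$ holds at $y^k$; in the remaining types $\eta_i^{G,k}=0$. Since products of convergent sequences converge, in every case the limit obeys $\eta_i^G\geqslant 0$ and $\eta_i^H\eta_i^G=0$. For condition (iii) I would use a diagonal selection. Finiteness of the index set yields $K$ such that for all $k\geqslant K$ the nonzero components of the limit have matching signs in the $k$-th multiplier, namely $\lambda_i>0\Rightarrow\lambda_i^k>0$, $\mu_i\neq 0\Rightarrow\mathrm{sgn}\,\mu_i^k=\mathrm{sgn}\,\mu_i$, $\eta_i^H\neq 0\Rightarrow\mathrm{sgn}\,\eta_i^{H,k}=\mathrm{sgn}\,\eta_i^H$, and $\eta_i^G>0\Rightarrow\eta_i^{G,k}>0$. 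For each such $k$ I would pick $j(k)$ with $\|w^{k,j(k)}-y^k\|<1/k$ and set $z^k:=w^{k,j(k)}$, so $z^k\to x^\ast$; since the sign pattern of (iii) holds at $w^{k,j(k)}$ for the $k$-th multiplier and those signs agree with the limit's on its support, each implication of (iii) transfers to $(\lambda,\mu,\eta^H,\eta^G)$ along $\{z^k\}$ (components that vanish in the limit give vacuous implications). Hence the limit multiplier satisfies (i)--(iii) at $x^\ast$, the desired contradiction.

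The main obstacle is exactly the bookkeeping of the previous paragraph: one must verify that the sign and complementarity conditions of the limit cell survive every admissible migration of the biactive indices $I_{00}(x^\ast)$, which is the feature distinguishing MPVC from ordinary nonlinear programming. Once these migrations are catalogued the verification is routine, and the diagonal construction for (iii) is entirely standard.
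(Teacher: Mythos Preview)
Your proposal is correct and follows essentially the same approach as the paper: contradiction, normalisation of the violating multipliers, and passage to the limit to produce a nonzero multiplier violating quasinormality at $x^\ast$. Your version is in fact more careful than the paper's own proof, which simply asserts that conditions (i)--(iii) pass to the limit without spelling out the index-set migration analysis or the diagonal selection for the witnessing sequence; the cell-by-cell verification you sketch (especially for $I_{00}(x^\ast)$, using that $\eta_i^{H,k}\eta_i^{G,k}=0$ forces $\eta_i^H\eta_i^G=0$ in the limit) is exactly what is needed to make that step rigorous.
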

 \begin{proof}
 Suppose contrary that there is a sequence $\{x^k\}$ such that $x^k \neq x$ for all $k$ and $x^k \rightarrow x$ and $x^k$ is not MPVC-generalized quasinormal for all $k$. Therefore there exist scalars $(\lambda^k, \mu^k, \eta^{G^k}, \eta^{H^k}) \neq (0,0,0,0)$ and a sequence $\{x^{k,t}\}$ such that \\
  ${\rm (i)} ~~\sum_{i=1}^{m} \lambda_i^k \nabla g_i(x^k) + \sum_{i=1}^{p} \mu_i^k \nabla h_i (x^k) + \sum_{i=1}^{q} \eta^{G^k}_i \nabla G_i(x^k)
  -\sum_{i=1}^{q} \eta^{H^k}_i \nabla H_i(x^k)~=~0, $\\
  ${\rm (ii)}~~~\lambda_i^k \geqslant0 ~\forall~i \in I_g(x^k),~~~
  	\lambda_i^k = 0 ~~\forall~i \notin I_g(x^k)$
  \begin{eqnarray*}
  {\rm and~~~~}\eta_i^{G^k} &=&0 ~~~~~~\forall~i \in I_{+-}(x^k) \cup I_{0-}(x^k) \cup I_{0+}(x^k),
  ~~~\eta_i^{G^k} \geqslant 0 ~~~~\forall~i \in I_{00}(x^k) \cup I_{+0}(x^k),\\
  	\eta_i^{H^k} &=& 0 ~~~~\forall~i \in I_+(x^k),~~~ \eta_i^{H^k}  \geqslant 0 ~~~~\forall~i \in I_{0-}(x^k),~~~
  	\eta_i^{H^k}~ {\rm is}~{\rm~free~}~\forall~i \in I_{0+}(x^k),\\
  	\eta_i^{G^k}\eta_i^{H^k} & = &0  ~~~~\forall~i \in I_{00}(x^k),
  \end{eqnarray*}
  $ {\rm (iii)} $ There is a sequence $\{x^{k,t}\} \rightarrow x^k$ such that the following is true $\forall ~k  $ as $t \rightarrow \infty$,
  \begin{eqnarray*}
  	\lambda_i^k g_i(x^{k,t})  &>0& \forall ~\lambda_i^k > 0, \\
  	\mu_i^k h_i(x^{k,t}) &>0&\forall~\mu_i^k \neq 0, \\
  	-\eta_i^{H^k} H_i(x^{k,t})&>0& \forall ~ \eta_i^{H^k} \neq 0, \\
  	\eta_i^{G^k} G_i(x^{k,t})&>0& \forall ~ \eta_i^{G^k} > 0.
  \end{eqnarray*}
 Let for each $k$,
 \begin{eqnarray*}
 	\tilde{\lambda}^k &=& \frac{\lambda^k}{||(\lambda^k, \mu^k, \eta^{G^k}, \eta^{H^k})||}, \\
 	\tilde{\mu}^k &=& \frac{\mu^k}{||(\lambda^k, \mu^k, \eta^{G^k}, \eta^{H^k})||},\\
 	\tilde{\eta}^{G^k} &=& \frac{\eta^{G^k}}{||(\lambda^k, \mu^k, \eta^{G^k}, \eta^{H^k})||},\\
 	\tilde{\eta}^{H^k} &=& \frac{\eta^{H^k}}{||(\lambda^k, \mu^k, \eta^{G^k}, \eta^{H^k})||}.
 \end{eqnarray*}
 then without loss of generality we assume that
 \begin{equation*}
 	(\tilde{\lambda}^k, \tilde{\mu}^k, \tilde{\eta}^{G^k}, \tilde{\eta}^{H^k}) ~\rightarrow ~(\lambda^\ast, \mu^\ast, \eta^{G^\ast}, \eta^{H^\ast})
 \end{equation*}
 dividing both sides of (i), (ii) and (iii) by $||(\lambda^k, \mu^k, \eta^{G^k}, \eta^{H^k})||$ and taking the limit, we have \\

 ${\rm (i)} ~~\sum_{i=1}^{m} \lambda_i^\ast \nabla g_i(x^\ast) + \sum_{i=1}^{p} \mu_i^\ast \nabla h_i (x^\ast) + \sum_{i=1}^{q} \eta^{G^\ast}_i \nabla G_i(x^\ast)
 -\sum_{i=1}^{q} \eta^{H^\ast}_i \nabla H_i(x^\ast)~=~0 $,\\

 $ {\rm (ii)}~~~\lambda_i^\ast \geqslant 0 ~~~~\forall~i \in I_g(x^\ast),~~~
 	\lambda_i^\ast =0~\forall~i \notin I_g(x^\ast)$
 \begin{eqnarray*}
 {\rm and}~~~~	\eta_i^{G^\ast} &=&0 ~~~~\forall~i \in I_{+-}(x^\ast) \cup I_{0-}(x^\ast) \cup I_{0+}(x^\ast),~~~
 	\eta_i^{G^\ast}  \geqslant 0 ~~~~\forall~i \in I_{00}(x^\ast) \cup I_{+0}(x^\ast),\\
 	\eta_i^{H^\ast} &=& 0 ~~~~\forall~i \in I_+(x^\ast),~~~
 	\eta_i^{H^\ast}  \geqslant 0 ~~~~\forall~i \in I_{0-}(x^\ast),~~~
 	\eta_i^{H^\ast} ~{\rm is}~{\rm~ free}~ \forall~i \in I_{0+}(x^\ast), \\
 	\eta_i^{G^\ast}\eta_i^{H^\ast} & = &0  ~~~~\forall~i \in I_{00}(x^\ast),
 \end{eqnarray*}

 $ {\rm (iii)} $ A sequence $\{\xi^k\} \rightarrow x^\ast$ such that the following holds for each $i$ as $k \rightarrow \infty$,
 \begin{eqnarray*}
 	\lambda_i^\ast g_i(\xi^k) &>0&  \forall ~\lambda_i^\ast > 0, \\
 	\mu_i^\ast h_i(\xi^k) &>0& \forall ~\mu_i^\ast \neq 0, \\
 	-\eta_i^{H^\ast} H_i(\xi^k) &>0& \forall ~ \eta_i^{H^\ast} \neq 0, \\
 	\eta_i^{G^\ast} G_i(\xi^k)&>0& \forall ~ \eta_i^{G^\ast} > 0.
 \end{eqnarray*}
 since $(\lambda^k, \mu^k, \eta^{H^k}, \eta^{G^k})$ is nonzero, therefore $(\lambda^\ast, \mu^\ast, \eta^{H^\ast}, \eta^{G^\ast}) \neq (0,0,0,0)$. Hence, it is a contradiction to the fact that $x^\ast$ is MPVC-generalized quasinormal. Hence, all feasible points in a neighbourhood of $x^\ast$ are MPVC-generalized quasinormal.
 \end{proof}
 The following result gives a representation for the limiting normals at a point to the constraint region in terms of quasinormal multipliers

 \begin{Theorem}
 		\label{th a}
 	If $\bar{x}$ is MPVC-generalized quasinormal for $\mathcal{C}$, then
 	\begin{equation*}
 	N_\mathcal{C} (\bar{x}) \subset \left \{ \sum_{i=1}^{m} \lambda_i \nabla g_i(\bar{x}) + \sum_{i=1}^{p} \mu_i \nabla h_i(\bar{x}) +\sum_{i=1}^{q} [ \eta_i^G \nabla G_i (\bar{x}) -  \eta_i^H \nabla H_i(\bar{x}) ]  \Big |~ (\lambda, \mu,\eta^G, \eta^H) \in M(\bar{x})  \right \}
 	\end{equation*}
 	where $M(\bar{x})$ denotes the set of MPVC-generalized quasinormal multipliers corresponding to the point $\bar{x}$.

 \end{Theorem}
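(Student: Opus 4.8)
The plan is to follow the Bertsekas--Ozdaglar normal-cone representation technique, transplanted to the MPVC setting via the enhanced Fritz John conditions of Theorem \ref{thmefj-M}. Fix $d \in N_\mathcal{C}(\bar{x})$. By the proximal-normal characterization of the limiting normal cone recalled in the preliminaries, there are sequences $x^k \to_\mathcal{C} \bar{x}$ and $d^k \in N_\mathcal{C}^{\pi}(x^k)$ with $d^k \to d$, and for each $k$ a scalar $\sigma_k > 0$ such that $\langle d^k, x - x^k\rangle \leqslant \sigma_k \|x - x^k\|^2$ for all $x \in \mathcal{C}$. Consequently $x^k$ is a (local) minimizer over $\mathcal{C}$ of the smooth objective $\psi_k(x) := \sigma_k\|x - x^k\|^2 - \langle d^k, x - x^k\rangle$, whose gradient at $x^k$ is exactly $-d^k$. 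Note that the points $x^k$ are feasible for the MPVC, which will be essential for controlling the index sets.

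First I would apply Theorem \ref{thmefj-M} to the problem of minimizing $\psi_k$ over $\mathcal{C}$ (its statement and proof remain valid verbatim for an arbitrary $C^1$ objective in place of $f$). This yields multipliers $(\alpha^k, \lambda^k, \mu^k, \eta^{G,k}, \eta^{H,k}) \neq 0$ which, after normalizing to unit norm, satisfy $\alpha^k d^k = \sum_i \lambda_i^k \nabla g_i(x^k) + \sum_i \mu_i^k \nabla h_i(x^k) + \sum_i \eta_i^{G,k}\nabla G_i(x^k) - \sum_i \eta_i^{H,k}\nabla H_i(x^k)$, together with the sign restrictions of condition (ii) relative to the index sets \emph{at} $x^k$, and the complementary-violation sequences $\{x^{k,t}\}_t \to x^k$ furnished by condition (iv). Passing to a subsequence I may assume the unit multiplier vectors converge to some $(\alpha, \lambda, \mu, \eta^G, \eta^H)$ of unit norm, hence nonzero. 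Letting $k \to \infty$ and using $x^k \to \bar{x}$, $d^k \to d$ and continuity of the gradients gives the stationarity equation $\alpha d = \sum_i \lambda_i \nabla g_i(\bar{x}) + \sum_i \mu_i \nabla h_i(\bar{x}) + \sum_i \eta_i^G \nabla G_i(\bar{x}) - \sum_i \eta_i^H \nabla H_i(\bar{x})$.

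The step I expect to be the main obstacle is verifying that the limit multipliers satisfy the sign conditions (ii) \emph{at $\bar{x}$}, since the index sets $I_{+-}, I_{+0}, I_{00}, I_{0+}, I_{0-}$ evaluated at the nearby feasible points $x^k$ need not coincide with those at $\bar{x}$. Because there are finitely many indices and finitely many admissible configurations, I would refine to a subsequence along which each $i$ keeps a fixed membership at all $x^k$, and then check case by case, using feasibility $H_i(x^k) \geqslant 0$ and $G_i(x^k) H_i(x^k) \leqslant 0$, that the restrictions imposed on $\eta_i^{G,k}, \eta_i^{H,k}$ by the configuration of $i$ at $x^k$ force in the limit exactly the restrictions required by the configuration of $i$ at $\bar{x}$. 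The delicate case is $i \in I_{00}(\bar{x})$: here $x^k$ may place $i$ in any of the five regions, but a short inspection shows that in every instance the limit yields $\eta_i^G \geqslant 0$ and $\eta_i^G \eta_i^H = 0$, as demanded for a biactive index.

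Finally I would establish $\alpha > 0$ and read off the conclusion. Using condition (iv) at each level, a diagonal argument extracts a single sequence $\xi^k \to \bar{x}$ for which $\lambda_i > 0 \Rightarrow \lambda_i g_i(\xi^k) > 0$, $\mu_i \neq 0 \Rightarrow \mu_i h_i(\xi^k) > 0$, $\eta_i^H \neq 0 \Rightarrow \eta_i^H H_i(\xi^k) < 0$ and $\eta_i^G > 0 \Rightarrow \eta_i^G G_i(\xi^k) > 0$ for all large $k$; here I use that a strict sign of a limit multiplier forces the same strict sign of $\eta^{(\cdot),k}$ eventually. Thus $(\lambda, \mu, \eta^G, \eta^H)$ already satisfies the complementary-violation condition (iii) of MPVC-generalized quasinormality. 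If $\alpha = 0$, then this nonzero multiplier also satisfies (i) and (ii) at $\bar{x}$, contradicting that $\bar{x}$ is MPVC-generalized quasinormal; hence $\alpha > 0$. Dividing the stationarity equation by $\alpha$ and setting $\bar\lambda = \lambda/\alpha$, and so on, exhibits $d$ as the required Lagrangian combination; since scaling by $1/\alpha > 0$ preserves all signs and the sequence $\xi^k$, we have $(\bar\lambda, \bar\mu, \bar\eta^G, \bar\eta^H) \in M(\bar{x})$, which completes the proof.
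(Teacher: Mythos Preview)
Your argument is correct and follows the same overall Bertsekas--Ozdaglar strategy as the paper: represent a limiting normal via nearby proximal/Fr\'echet normals, realize each as the negative gradient of a smooth objective minimized over $\mathcal{C}$, invoke the enhanced Fritz John machinery there, and pass to the limit using quasinormality of $\bar{x}$ to rule out degeneracy. The differences are organizational rather than conceptual. The paper first invokes Lemma~\ref{lemma a} to transfer quasinormality to the approximating points $x^l$, then applies Theorem~\ref{m stat} at each $x^l$ to obtain already-normalized enhanced M-stationary multipliers, and subsequently proves boundedness of this multiplier sequence by a separate contradiction against quasinormality of $\bar{x}$; you instead apply Theorem~\ref{thmefj-M} directly at each $x^k$ (no CQ needed), normalize the Fritz John vector, and push the whole contradiction with quasinormality into a single $\alpha>0$ argument at the end. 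Your route therefore bypasses Lemma~\ref{lemma a} altogether, at the cost of having to verify by hand the case analysis that the sign restrictions at $x^k$ survive in the limit at $\bar{x}$ --- a verification the paper effectively absorbs into Lemma~\ref{lemma a} and the limit step in its Step~III. Both approaches yield the same conclusion; yours is a bit more self-contained, while the paper's modularizes the ``quasinormality is open on $\mathcal{C}$'' fact for reuse (it is needed again in the error-bound theorem).
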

 \begin{proof}
 Here, for sake of the simplicity, we omit the equality and inequality constraints and consider only vanishing constraints, which needs to be handled. Let $v$ be an element of set $N_\mathcal{C}(\bar{x})$. By definition of limiting normal cone, there are sequences $x^l \rightarrow \bar{x}$ and $v^l \rightarrow v$ with $v^l \in N_\mathcal{C}^F (x^l)$ and $x^l \in \mathcal{C}$.\\
 \hspace*{0.5 cm} Step I. By the lemma \ref{lemma a}, $x^l$ is generalized quasinormal for sufficient large $l$. Now by \cite[Theorem 6.11]{Rocka}, for each $l$, there exists a smooth function $\psi^l$ that has a strict global minimizer $x^l$ over $\mathcal{C}$  with $-\nabla \psi^l (x^l) = v^l$. Since $x^l$ is MPVC-generalized quasinormal point of $\mathcal{C}$, therefore by Theorem \ref{m stat} enhanced M-stationary condition holds for problem
 \begin{eqnarray*}
 	\min ~\psi^l(x)\\
  	{\rm s.t.}~~x \in \mathcal{C}.
 \end{eqnarray*}
 That is, there exists a vector $(\eta^{G^l}, \eta^{H^l})$ such that
 \begin{equation}
 v^l \in \sum_{i=1}^{q} \eta_i^{G^l} \nabla G_i(x^l) - \sum_{i=1}^{q} \eta_i^{H^l} \nabla H_i(x^l)
 \label{eq normal}
 \end{equation}
 with
 \begin{eqnarray*}
 \eta_i^{G^l} &=&0 ~~\forall~i \in I_{+-}(x^l) \cup I_{0-}(x^l) \cup I_{0+}(x^l),~~~
 \eta_i^{G^l}  \geqslant 0 ~~\forall~i \in I_{00}(x^l) \cup I_{+0}(x^l),\\
 \eta_i^{H^l} &=& 0 ~~\forall~i \in I_+(x^l),~~~ \eta_i^{H^l} \geqslant 0 ~~~~ \forall~i \in I_{0-}(x^l), {~\rm and}~
 \eta_i^{H^l} ~{\rm is} ~{\rm free} ~\forall~ i \in I_{0+}(x^l),\\
 \eta_i^{G^l} \eta_i^{H^l} & = &0  ~~\forall~i \in I_{00}(x^l).
 \end{eqnarray*}
Moreover, let $\mathcal{G}^l = \{i | \eta^{G^l}_i > 0\},~   \mathcal{H}^l = \{i | \eta^{H^l}_i \neq 0 \}$ then there is a sequence $\{x^{k,l}\} \rightarrow x^k$ as $k \rightarrow \infty $ such that  $\forall ~k \in \mathbb{N}$,
 \begin{eqnarray*}
  \eta^{G^l}_i G_i(x^{k,l}) &>&0 ~~\forall ~i \in \mathcal{G}^l,\\
  -\eta^{H^l}_i H_i(x^{k,l}) &>&0 ~~\forall ~i \in \mathcal{H}^l.
 \end{eqnarray*}
 \hspace*{0.5 cm} Step II. Now we will show that the sequence $\{(\eta^ {G^l}, \eta^{H^l})\}$ is bounded. To prove this, on contrary suppose that  $\{(\eta^ {G^l}, \eta^{H^l})\}$  is unbounded. Now for all $l$, denote
 \begin{eqnarray*}
 	\tilde{\eta}^{G^l} &=& \frac{\eta^{G^l}}{||(\eta^{G^l}, \eta^{H^l})||},\\
 	\tilde{\eta}^{H^l} &=& \frac{\eta^{H^l}}{||(\eta^{G^l}, \eta^{H^l})||}
 \end{eqnarray*}
 then without loss of generality, we can assume that
 \begin{equation*}
 	(\tilde{\eta}^{G^l}, \tilde{\eta}^{H^l}) \rightarrow (\eta^{G^\ast}, \eta^{H^\ast}).
 \end{equation*}
 Dividing both sides of eq (\ref{eq normal}) by $||(\eta^{G^l}, \eta^{H^l})||$ and then taking the limit, we obtain
 \begin{equation}
 0 \in \sum_{i=1}^{q} \eta_i^{G^\ast} \nabla G_i(\bar{x}) - \sum_{i=1}^{q} \eta_i^{H^\ast} \nabla H_i(\bar{x})
 \end{equation}
 \begin{eqnarray*}
 {~\rm and~~~~~~}\eta_i^{G^\ast} &=&0 ~~~~\forall~i \in I_{+-}(\bar{x}) \cup I_{0-}(\bar{x}) \cup I_{0+}(\bar{x}),~~~
 	\eta_i^{G^\ast}  \geqslant 0 ~~~~\forall~i \in I_{00}(\bar{x}) \cup I_{+0}(\bar{x}),\\
 	\eta_i^{H^\ast} &=& 0 ~~~~\forall~i \in I_+(\bar{x}),~~~
 	\eta_i^{H^\ast}  \geqslant 0 ~~~~ \forall~i \in I_{0-}(\bar{x}),~{\rm and}~
 	\eta_i^{H^\ast}~ {\rm is}~{\rm~free}~\forall~i \in I_{0+}(\bar{x}),\\
 	\eta_i^{G^\ast} \eta_i^{H^\ast} & = &0  ~~~~\forall~i \in I_{00}(\bar{x})
 \end{eqnarray*}
 and a sequence $\{\zeta^l \} \rightarrow \bar{x}$ as $l \rightarrow \infty$, and for each $l$,
 \begin{eqnarray*}
 	\eta^{G^\ast}_i G_i(\zeta^l) &>0& ~~\forall ~i \in \mathcal{G}^l,\\
 	-\eta^{H^\ast}_i H_i(\zeta^l) &>0& ~~\forall ~i \in \mathcal{H}^l.
 \end{eqnarray*}
 Now, here is a violation of the fact that $\bar{x}$ is MPVC-quasinormal, hence sequence $\{(\eta^ {G^l}, \eta^{H^l})\}$ must be bounded.\\
  \hspace*{0.5 cm}Step III.  Without loss of generality we can assume now $(\eta^{G^l}, \eta^{H^l}) \rightarrow (\eta^G, \eta^H)$ as $l \rightarrow \infty$,
  \begin{equation}
  v \in \sum_{i=1}^{q} \eta_i^G \nabla G_i(\bar{x}) - \sum_{i=1}^{q} \eta_i^H \nabla H_i(\bar{x})
  \end{equation}
   \begin{eqnarray*}
  	{~~\rm and~~~~}\eta_i^G &=&0 ~~~~\forall~i \in I_{+-}(\bar{x}) \cup I_{0-}(\bar{x}) \cup I_{0+}(\bar{x}),~~~
  	\eta_i^G  \geqslant 0 ~~~~\forall~i \in I_{00}(\bar{x}) \cup I_{+0}(\bar{x}),\\
  	\eta_i^H &=& 0 ~~~~\forall~i \in I_+(\bar{x})),~~~
  	\eta_i^H  \geqslant 0 ~~~~ \forall~i \in I_{0-}(\bar{x}),~ {\rm and}~
  	\eta_i^H ~{\rm is} ~ {\rm~free} ~\forall~i \in I_{0+}(\bar{x}),\\
  	\eta_i^G \eta_i^H & = &0  ~~~~\forall~i \in I_{00}(\bar{x})
  \end{eqnarray*}
  and we can find a subsequence $\{\zeta^l\}$ converges to $\bar{x}$ as $l \rightarrow \infty $ and for each $l$,
  \begin{eqnarray*}
  	\eta^G_i G_i(\zeta^l) &>0& ~~\forall ~i \in \mathcal{G}^l,\\
  	-\eta^H_i H_i(\zeta^l) &>0& ~~\forall ~i \in \mathcal{H}^l.
  \end{eqnarray*}
  Hence, the proof is complete.
 \end{proof}

 In \cite[Theorem 4.5]{Kanzow1}, it has been established that MPEC-generalized pseudonormality is a sufficient condition for the existence of a local error bound for smooth MPEC. Ye and Zhang \cite[Theorem 3.1]{ye} has improved it for nonsmooth MPEC under the assumptions that $g_j$ are to be  only subdifferentially regular around the concerned point. In the context of MPVC, we confine ourselves only to smooth case, see \cite{Minch}. However, one can show that the result also holds under the assumptions of \cite[Theorem 3.1]{ye}.
 \begin{Theorem}
 \label{lebthm}
 	Let $x^\ast \in \mathcal{C}$ the feasible region of MPVC. If $x^\ast$ is MPVC-generalized quasinormal, then there are $\delta, c > 0$ such that
 	\begin{equation}
 	dist_\mathcal{C}(x) \leqslant c \left( ||h(x)||_1 + ||g^+(x)||_1 + \sum_{i=1}^{q} dist_\Delta (G_l(x), H_l(x)) \right)
 	\end{equation}
 	\begin{equation*}
 	holds~~\forall~~	x \in \mathbb{B}(x^\ast, \delta / 2) ,
 	\end{equation*}
 	where $\Delta := \{ (a,b) \in \mathbb{R}^2 | b \geqslant 0, ab \leqslant 0 \}$, and $~dist_\Delta (x)$ is the distance in $l_1$-norm from $x$ to the set $\Delta$.
 	\label{thm last}
 \end{Theorem}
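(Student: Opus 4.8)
The plan is to introduce an infeasibility residual for the constraint system and show that MPVC-generalized quasinormality forces its nonsmooth slope to stay bounded away from zero at infeasible points near $x^\ast$, whence the error bound follows by a standard variational argument. Concretely, I would set
\[
\varphi(x) := \|h(x)\|_1 + \|g^+(x)\|_1 + \sum_{l=1}^{q} dist_\Delta(G_l(x), H_l(x)),
\]
which is locally Lipschitz, nonnegative, and vanishes exactly on $\mathcal{C}$. Since the asserted inequality is precisely $dist_\mathcal{C}(x) \leqslant c\,\varphi(x)$, it suffices to produce $\gamma, \delta > 0$ with $dist(0, \partial\varphi(x)) \geqslant \gamma$ for every $x \in \mathbb{B}(x^\ast, \delta)$ with $\varphi(x) > 0$; the passage from this uniform subdifferential slope bound to the error bound on $\mathbb{B}(x^\ast, \delta/2)$ is the standard nonsmooth error-bound criterion derived from Ekeland's variational principle. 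Equivalently, one may pass through the proximal normal $x - \mathrm{proj}_\mathcal{C}(x)$ and represent $N_\mathcal{C}$ at the (feasible, hence by Lemma \ref{lemma a} quasinormal) projection via Theorem \ref{th a}; this is where the two preparatory results enter.

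I would prove the slope bound by contradiction. If it fails there is a sequence $x^k \to x^\ast$ with $\varphi(x^k) > 0$ and a subgradient $\zeta^k \in \partial\varphi(x^k)$ with $\zeta^k \to 0$. Applying the sum and chain rules for the limiting subdifferential, together with the normals to $\Delta$ (equivalently to $\Omega$) computed exactly as for $N_\Omega$ in the proofs of Theorem 3.1 and Theorem \ref{Theorem 4.1} via \cite[Lemma 4.1]{hoheisel} and \cite[Lemma 3.2]{hoheisel 2}, I would write
\[
\zeta^k = \sum_{i} \lambda_i^k \nabla g_i(x^k) + \sum_i \mu_i^k \nabla h_i(x^k) + \sum_l \eta_l^{G,k}\nabla G_l(x^k) - \sum_l \eta_l^{H,k}\nabla H_l(x^k),
\]
where the multipliers inherit the quasinormality sign pattern from the subgradients of $g_i^+$, $|h_i|$ and $dist_\Delta(G_l, H_l)$, and carry the support information: $\lambda_i^k$ is nonzero only where $g_i$ is violated, $\mu_i^k = \operatorname{sgn} h_i(x^k)$ where $h_i(x^k) \neq 0$, and $(\eta_l^{G,k}, \eta_l^{H,k})$ is a unit normal pointing from $(G_l(x^k), H_l(x^k))$ toward its projection onto $\Delta$ whenever the $l$-th pair is violated. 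Because $\varphi(x^k) > 0$, at least one constraint is violated, so the multiplier vector has norm bounded below by a positive constant while remaining bounded above; after passing to a subsequence I obtain a nonzero limit $(\lambda^\ast, \mu^\ast, \eta^{G,\ast}, \eta^{H,\ast})$. Continuity of the gradients and $\zeta^k \to 0$ then give condition (i) of MPVC-generalized quasinormality, and the index-set bookkeeping as $x^k \to x^\ast$—with the biactive block $I_{00}$ handled exactly as in the computation of the normal cone to $\Omega$—yields the sign conditions (ii).

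The hard part will be producing the witnessing sequence required in condition (iii) with \emph{strict} inequalities, since $x^k$ itself may satisfy some active constraint with equality (say $g_i(x^k)=0$) even though $\lambda_i^\ast > 0$. The remedy is that a nonzero \emph{limiting}-subdifferential component arises only as a limit of gradients taken over the genuinely violated region, so for each large $k$ there exist points arbitrarily close to $x^k$ at which every constraint carrying a nonzero limit multiplier is strictly violated in the prescribed direction; a diagonal extraction—of the same flavor as the sequence arguments in Lemma \ref{lemma a} and Theorem \ref{th a}—then delivers a single sequence converging to $x^\ast$ along which $\lambda_i^\ast g_i(\cdot) > 0$, $\mu_i^\ast h_i(\cdot) > 0$, $\eta_i^{H,\ast} H_i(\cdot) < 0$ and $\eta_i^{G,\ast} G_i(\cdot) > 0$. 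This furnishes a nonzero multiplier satisfying (i)--(iii), contradicting MPVC-generalized quasinormality at $x^\ast$ and completing the proof. I expect the coordination of the sign conditions on $I_{00}$ with this strict-support sequence to be the most delicate step, precisely because the biactive block is where the normal-cone geometry of $\Delta$ and the quasinormality multiplier rules interact.
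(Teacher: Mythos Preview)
Your parenthetical second route --- project to $\mathcal{C}$, take the proximal normal, and invoke Lemma~\ref{lemma a} together with Theorem~\ref{th a} --- is precisely the paper's proof. The paper takes $y^k\to x^\ast$ infeasible, sets $x^k=\mathrm{proj}_{\mathcal C}(y^k)$, represents the unit normal $(y^k-x^k)/\|y^k-x^k\|\in N_{\mathcal C}(x^k)$ by quasinormal multipliers via Theorem~\ref{th a}, extracts a bounded multiplier sequence, and then Taylor-expands $g_i,h_i,G_l,H_l$ at $x^k$ to bound $\|y^k-x^k\|$ by the residual at $y^k$. So that alternative you mention is not merely equivalent in spirit; it is the actual argument, and it works because Theorem~\ref{th a} already delivers \emph{enhanced} multipliers carrying the sequence property (iii).

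Your primary route --- bounding $\mathrm{dist}(0,\partial\varphi)$ from below at infeasible points --- has a genuine gap exactly where you flag it. The claim that ``a nonzero limiting-subdifferential component arises only as a limit of gradients taken over the genuinely violated region'' is false for the building blocks of $\varphi$. For $g_i^{+}$ at a point with $g_i(x^k)=0$ one has $\partial g_i^{+}(x^k)=[0,1]\,\nabla g_i(x^k)$, and any $\lambda_i^k\in(0,1)$ is already a Fr\'echet subgradient there, with no nearby point where $g_i>0$ required; the same occurs for $|h_j|$ and for $\mathrm{dist}_\Delta$ on the boundary of $\Delta$. Hence from $\zeta^k\to 0$ and a subdifferential decomposition you can certainly extract a nonzero limit $(\lambda^\ast,\mu^\ast,\eta^{G,\ast},\eta^{H,\ast})$ satisfying (i)--(ii), but you cannot in general manufacture the witnessing sequence for (iii) along which every constraint with nonzero limit multiplier is \emph{strictly} violated. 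Without (iii) there is no contradiction to MPVC-generalized quasinormality, and the slope argument stalls. The paper sidesteps this entirely: by working at the \emph{feasible} projection $x^k$ and applying Theorem~\ref{th a} (whose proof rests on the enhanced M-stationarity of Theorem~\ref{Theorem 4.1} through Theorem~\ref{m stat}), the sequence condition (iii) comes packaged with the multipliers from the start, and the remaining work is a first-order expansion rather than subdifferential calculus.
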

 \begin{proof}
 For the sake of simplicity, we omit the equality constraints. Here, we check only the case when $x^\ast$ is on the boundary because assertion may fail there for $x \notin \mathcal{C}$. For $x \in int \mathcal{C}$ assertion is always true.\\
 \hspace*{0.5 cm} we choose some sequences $\{y^k\}$ and $\{x^k\}$ such that $y^k \rightarrow x^\ast,~y^k \notin \mathcal{C},$ and $x^k = \prod _\mathcal{C}(y^k)$, the projection of $y^k$ onto the set $\mathcal{C}$. Since $||x^k - y^k|| \leqslant ||y^k - x^\ast||$, therefore $x^k \rightarrow x^\ast$. We may assume here that both the sequences $\{y^k\}$ and $\{x^k\}$ belong to $\mathbb{B}(x^\ast, \delta_0)$.\\
 \hspace*{0.5 cm} Since $y^k - x^k \in N_\mathcal{C}^\pi (x^k) \subset N_\mathcal{C}^F(x^k)$, we have $\eta^k = \frac{y^k - x^k}{||y^k -x^k||} \in N_\mathcal{C}^F(x^k)$. Since $x^\ast$ is  MPVC-generalized quasinormal, therefore $x^k$ is also MPVC-generalized quasinormal (by Lemma \ref{lemma a}) for all sufficiently large $k$ and then without loss of generality, we may assume that all $x^k$ are MPVC-generalized quasinormal. Then by Theorem \ref{th a} there exist a sequence of scalars $\{(\lambda^k, \eta^{G^k}, \eta^{H^k})\}$ such that
 \begin{equation}
 \label{eq aa}
 	 \eta^k \in  \sum_{i=1}^{m} \lambda_i^k  \nabla g_i(x^k)  + \sum_{i=1}^{q} \eta_i^{G^k} \nabla G_i(x^k) - \sum_{i=1}^{q} \eta_i^{H^k} \nabla H_i(x^k)
 	  \end{equation}
 and~~~~$\lambda_i^k \geqslant 0 ~\forall~i \in I_g(x^k),~
 	~\lambda_i^k = 0 ~~ \forall~i \notin I_g(x^k),$
 \begin{eqnarray*}
 	\eta_i^{G^k} &=&0 ~~~~\forall~i \in I_{+-}(x^k) \cup I_{0-}(x^k) \cup I_{0+}(x^k),~~~
 	\eta_i^{G^k} \geqslant 0 ~~~~\forall~i \in I_{00}(x^k) \cup I_{+0}(x^k),\\
 	\eta_i^{H^k} &=& 0 ~~~~\forall~i \in I_+(x^k),~~~
 	\eta_i^{H^k}  \geqslant 0 ~~~~\forall~i \in I_{0-}(x^k),~{\rm and}~
 	\eta_i^{H^k} ~{\rm is}~{\rm free ~ } ~\forall~ i \in I_{0+}(x^k),\\
 	\eta_i^{G^k} \eta_i^{H^k} & = &0  ~~~~\forall~i~ \in I_{00}(x^k)
 \end{eqnarray*}
 and there exists a sequence $\{x^{k,s}\} \in \mathbb R^n$ such that $x^{k,s} \rightarrow x^k$ as $s \rightarrow \infty$ and for which we have
 \begin{eqnarray*}
 \lambda_i^k g_i(x^{k,s})&>&0 ~~~~~~~~~\forall ~\lambda_i^k > 0 ~~ {\rm and} ~~\forall ~s\in \mathbb N,\\
 \eta_i^{G^k} G_i(x^{k,s}) &>&0 ~~~~~~~~~ \forall ~\eta_i^{G^k} > 0~~ {\rm and} ~~\forall ~s\in \mathbb N, \\
 -\eta_i^{H^k} H_i(x^{k,s}) &>&0 ~~~~~~~~~\forall ~\eta_i^{H^k} \neq 0 ~~~ {\rm and} ~~\forall ~s\in \mathbb N.
 \end{eqnarray*}
 Now, similar to the proof of Theorem \ref{th a}, we can show that the MPVC-generalized quasinormality of $x^\ast$ shows the boundedness of sequence $\{(\lambda^k, \eta^{G^k}, \eta^{H^k})\}$, and hence we may assume that $(\lambda^k, \eta^{G^k}, \eta^{H^k})$ converges to some vector $( \lambda^\ast, \eta^{G^\ast}, \eta^{H^\ast} )$. Then there exists a number $M_0 > 0$ such that for all $k$, $||(\lambda^\ast, \eta^{G^\ast}, \eta^{H^\ast})|| \leqslant M_0$. Without loss of generality, we may assume that $y^k \in \mathbb{B}(x^\ast, \frac{\delta_0}{2})/\mathcal{C}$ and $x^k \in \mathbb{B}(x^\ast, \delta_0)$ for all $k$. Now we set $(\bar{\lambda}^k, \bar{\eta}^{G^k}, \bar{\eta}^{H^k}) = 2(\lambda^k, \eta^{G^k}, \eta^{H^k})$, then from (\ref{eq aa}), for each $k$,
 \begin{equation*}
 \eta^k = \frac{y^k-x^k}{||y^k - x^k||} = \sum_{i=1}^{m} \lambda^k \nabla g_i(x^k)	 + \sum_{i=1}^{q} \eta_i^{G^k} \nabla G_i(x^k) - \sum_{i=1}^{q} \eta_i^{H^k} \nabla H_i(x^k)
 \end{equation*}
 or
 \begin{equation*}
 \frac{y^k-x^k}{||y^k - x^k||} = \frac{x^k - y^k}{||y^k - x^k||}  + \sum_{i=1}^{m} \bar{\lambda}_i^k \nabla g_i(x^k)	 + \sum_{i=1}^{q} \bar{\eta}_i^{G^k} \nabla G_i(x^k) - \sum_{i=1}^{q} \bar{\eta}_i^{H^k} \nabla H_i(x^k).
 \end{equation*}
 Now from the above discussion, we have
 \begin{eqnarray*}
 	||y^k - x^k || &=&\left \langle \frac{y^k - x^k}{||y^k -x^k||}, y^k - x^k  \right \rangle \\
 	&=& \left \langle \frac{x^k - y^k}{||y^k -x^k||}, y^k - x^k  \right \rangle  + \sum_{i=1}^{m} \langle \bar{\lambda}_i^k \rho_i^k, y^k-x^k  \rangle \\
 	&&  + \sum_{i=1}^{q} \langle \bar{\eta}_i^{H^k} \nabla H_i (x^k), y^k - x^k \rangle - \sum_{i=1}^{q} \langle \bar{\eta}_i^{G^k} \nabla G_i (x^k), y^k - x^k \rangle \\
 	&\leqslant&\sum_{i=1}^{m} \langle \bar{\lambda}_i^k \nabla g_i(x^k), y^k-x^k  \rangle
 	  + \sum_{i=1}^{q} \langle \bar{\eta}_i^{H^k} \nabla H_i (x^k), y^k - x^k \rangle\\ &&- \sum_{i=1}^{q} \langle \bar{\eta}_i^{G^k} \nabla G_i (x^k), y^k - x^k \rangle + o (||y^k - x^k||) \\
 	  &\leqslant& \sum_{i=1}^{m}  \bar{\lambda}_i^k (g_i(y^k) + o(||y^k-x^k||))
 	  + \sum_{i=1}^{q}  \bar{\eta}_i^{H^k} ( H_i (y^k)+  o(||y^k-x^k||))\\
 	  &&- \sum_{i=1}^{q} \bar{\eta}_i^{G^k}  (G_i (y^k) + o (||y^k - x^k||))+ o (||y^k - x^k||)\\
 	  &\leqslant& 2 \left[ \sum_{i=1}^{m} \lambda_i^k g_i(y^k) + \sum_{i=1}^{q} \eta_i^{H^k} H_i(y^k) - \sum_{i=1}^{q} \eta_i^{G^k} G_i(y^k) \right]\\
 	  &&+ 2 \left |\sum_{i=1}^{m} \lambda_i^k  + \sum_{i=1}^{q} \eta_i^{H^k}  - \sum_{i=1}^{q} \eta_i^{G^k} + 1 \right | o (||y^k-x^k||).
 \end{eqnarray*}
 Now, without loss of generality, we may assume that for sufficiently large $k$,
 \begin{equation*}
 	o (||y^k-x^k||) ~\leqslant~\frac{1}{4(M_0 + 1)} ||y^k - x^k||,
 \end{equation*}
 then we have
 \begin{eqnarray*}
||y^k-x^k|| &\leqslant&2 \left[ \sum_{i=1}^{m} \lambda_i^k g_i(y^k) + \sum_{i=1}^{q} \eta_i^{H^k} H_i(y^k) - \sum_{i=1}^{q} \eta_i^{G^k} G_i(y^k) \right] + \frac{1}{2} ||y^k - x^k||.
 \end{eqnarray*}
 Now, all the above discussion implies that
 \begin{equation*}
 { \rm dist}_\mathcal{C}(y^k) ~=~||y^k - x^k|| ~ \leqslant ~ 4 M_0 \left( \sum_{i=1}^{m} g_i^+(y^k) + \phi (G(y^k), H(y^k)) \right)
 \end{equation*}
 where
 \begin{equation*}
 	\phi (G(y^k), H(y^k)) ~=~\sum_{i=1}^{q}\max \{0, -H_i(y^k), ~\min\{G_i(y^k),H_i(y^k)\} \}.
 \end{equation*}
 Hence, for any sequence $\{y^k\} \in \mathbb R^n$ converging to $x^\ast$ there is a number $c > 0$ such that
 \begin{equation*}
 	{ \rm dist}_\mathcal{C}(y^k) ~ \leqslant ~ c \left( ||g^+(y^k)||_1 + \sum_{i=1}^{q} {\rm dist}_\Delta (G_i(y^k), H_i(y^k)) \right)~~~~~\forall~k  \in \mathbb N.
 \end{equation*}
 This implies the error bound property at $x^\ast$. Indeed, suppose the contrary. Then there exists a sequence $\tilde{y}^k \rightarrow x^\ast$ such that $\tilde{y}^k \notin \mathcal{C}$ and
  \begin{equation*}
  { \rm dist}_\mathcal{C}(\tilde{y}^k) ~ > ~ c \left( ||g^+(\tilde{y}^k)||_1 + \sum_{i=1}^{q} {\rm dist}_\Delta (G_i(\tilde{y}^k), H_i(\tilde{y}^k)) \right)~~~~~\forall~k \in \mathbb N.
  \end{equation*}
  which is a contradiction.
   \end{proof}
  \section{Concluding Remarks}
  \label{concl}
  We have used the Fritz John  approach for MPVC, first time, to derive the M-stationary conditions under weak  constraint qualifications. The derivations for M-stationary conditions given in section 2 are simpler than others, available in the literature. Further, the enhanced M-stationarity  has been shown to be a new stationary condition for MPVC. The enhanced stationarity motivated to introduce a new  constraint qualification: MPVC-generalized quasinormality and is found to be weaker than MPVC-CPLD. An error bound result has been found using these constraint qualifications. However, it remains to discuss the relationship of this new constraint qualification with other known MPVC-constraint qualifications, such as MPVC-GCQ and MPVC-ACQ. We hope that these relationships will open up some new paths for MPVC field.

 	\end{document}